\documentclass[11pt,a4paper]{article}
\usepackage{style}

\author{Thomas \textsc{Budzinski} and Thomas \textsc{Lehéricy}}
\title{Recurrence of the Uniform Infinite Half-Plane Map via duality of resistances}

\DeclareSymbolFont{calletters}{OMS}{cmsy}{m}{n}
\DeclareSymbolFontAlphabet{\mathcal}{calletters}

\newcommand{\uhpqs}{Q_\infty}

\newcommand{\uhpms}{M_\infty}
\newcommand{\uhpmg}{M^g_\infty}
\newcommand{\Hull}{B^\bullet}

\newcommand{\beads}{beads\xspace}

\newcommand{\eff}{\operatorname{eff}}

\newcommand{\monoQ}{\Q}
\newcommand{\monoQQ}{\QQ}
\newcommand{\biQ}{\Q^{\bullet}}
\newcommand{\biQQ}{\QQ^{\bullet}}

\newcommand{\triQ}{\Q^{\bullet\bullet}}

\newcommand{\triQQ}{\QQ^{\bullet\bullet}}
\newcommand{\triQA}{\A}
\newcommand{\triQQA}{\AA}

\newcommand{\monoM}{\M}
\newcommand{\monoMM}{\MM}
\newcommand{\biM}{\M^\bullet}
\newcommand{\biMM}{\MM^\bullet}
\newcommand{\triM}{\M^{\bullet\bullet}}
\newcommand{\triMM}{\MM^{\bullet\bullet}}

\newcommand{\monoP}{\J}
\newcommand{\monoPP}{\mathcal{J}}
\newcommand{\biP}{{\J^\bullet}}
\newcommand{\biPP}{{\mathcal{J}^\bullet}}

\newcommand{\Core}{\operatorname{TCore}}
\newcommand{\Coreq}{\operatorname{Core}}

\newcommand{\Tutte}{Tutte mapping\xspace}
\newcommand{\Ti}[1]{\TT\l(#1\r)}
\newcommand{\Tti}[1]{\widetilde{\TT}\l(#1\r)}

\newcommand{\UIHPM}{\textsc{uihpm}\xspace}
\newcommand{\UIHPQ}{\textsc{uihpq}\xspace}
\newcommand{\UIHPT}{\textsc{uihpt}\xspace}
\newcommand{\UIPQ}{\textsc{uipq}\xspace}
\newcommand{\UIPT}{\textsc{uipt}\xspace}
\newcommand{\UIPM}{\textsc{uipm}\xspace}

\newcommand{\dloc}{d_{\text{loc}}}


\begin{document}

\maketitle

\begin{abstract}
We study the simple random walk on the Uniform Infinite Half-Plane Map, which is the local limit of critical Boltzmann planar maps with a large and simple boundary. We prove that the simple random walk is recurrent, and that the resistance between the root and the boundary of the hull of radius $r$ is at least of order $\log r$. This resistance bound is expected to be sharp, and is better than those following from previous proofs of recurrence for non bounded-degree planar maps models. Our main tools are the self-duality of uniform planar maps, a classical lemma about duality of resistances and some peeling estimates. The proof shares some ideas with Russo--Seymour--Welsh theory in percolation.
\end{abstract}

\section*{Introduction}

\paragraph{Local limits of random planar maps.}

Local limits of random planar maps have been the subject of extensive research in the last fifteen years. The first local limit that was introduced is the Uniform Infinite Planar Triangulation of Angel and Schramm \cite{AS03}. Since then, many other models with various topologies and local structures have been investigated. Such models proved to be the local limits of finite planar maps chosen uniformly in various classes when the size goes to infinity. Examples of such models with the topology of the plane include the \UIPQ \cite{Kri05, CD06} for quadrangulations, the \UIPM \cite{MN13} for general maps or the wide family of infinite Boltzmann planar maps \cite{BS13,St18}. On the other hand, local limits of models with a boundary when both the boundary length and the total volume go to infinity have also been investigated, and are usually maps with the topology of the half-plane. Such maps include the \UIHPT \cite{Ang05} for triangulations and the \UIHPQ with a general boundary \cite{CMboundary} or with a simple boundary \cite{CCUIHPQ} for quadrangulations.

\paragraph{The simple random walk on random planar maps.}
Many features of these local limits are now quite well understood such as their volume growth or percolation critical points (see e.g. \cite{CPeccot} for a survey). However, some aspects of the simple random walk on random planar maps remain quite mysterious. One of the most notable results about these random walks is the proof of the recurrence of a wide class of models including the \UIPT, \UIPQ and \UIPM by Gurel-Gurevich and Nachmias \cite{GGN12}, via the theory of circle packings (see also Lee \cite{Lee17} for a different proof). About half-plane models, the recurrence of the \UIHPT has also been established by Angel and Ray \cite{AR18}, also relying on circle packings. On a different note, the displacement exponent of the simple random walk on the \UIPT has been computed recently by Gwynne and Hutchcroft \cite{GH18}, as well as polylogarithmic upper bounds for resistances by Gwynne and Miller \cite{GM17}. These two results have only been proved for triangulations, but hold in the more general context of infinite triangulations carrying certain statistical physics models. However, they are up to polylogarithmic factors, which means that the precise order of magnitude of many quantities of interest is still open, such as the resistance between the root vertex and the boundary of the ball of radius $r$, or the displacement of the walk during time $n$.

\paragraph{Uniform infinite maps.}
The model we are interested in in this work is the \emph{Uniform Infinite Half-Plane Map} with a simple boundary (\UIHPM), that we denote by $\uhpms$. We note right now that its full plane analog, the \UIPM, has already been defined by M\'enard and Nolin in \cite{MN13}. A natural way to study the \UIPM is to use its relation with the \UIPQ via the classical Tutte bijection between general maps and quadrangulations. In particular, metric properties of the \UIPM are studied in this way in \cite{Leh18}. We will give a precise definition of the \UIHPM in Theorem \ref{thm_defn_UIHPM} below. As we will see, it can also be constructed by a version of the Tutte bijection, but the presence of a boundary makes the definition of the model and the proof of a local convergence result more complicated.

\paragraph{Recurrence of the UIHPM and resistances.}
Let $\rho$ be the root vertex of $\uhpms$. In a planar (or half-plane) map $m$, the \emph{ball} $B_r(m)$ of radius $r \geq 1$ is the set of all the vertices at distance at most $r$ from the root vertex $\rho$.
The \emph{hull} $\Hull_r(m)$ of radius $r$ is the union of the ball of radius $r$ and of all the bounded connected components of its complementary. Finally, in a graph $G$, we denote by $\Reff ^{G} \l( x \leftrightarrow y \r)$ the effective electric resistance between two vertices $x$ and $y$ in $G$, and by $\Reff ^{G} \l( A \leftrightarrow B \r)$ the effective resistance between two sets of vertices $A$ and $B$.
\begin{theorem}\label{main_thm}
There is a constant $c>0$ such that almost surely, for $r$ large enough, we have
\[ \Reff ^{\uhpms} \l( \rho \leftrightarrow  \uhpms \setminus \Hull_r(\uhpms) \r) \geq c \log r. \]
In particular, the map $\uhpms$ is almost surely recurrent.
\end{theorem}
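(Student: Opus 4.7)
The plan is to decompose the region between $\rho$ and the complement of $\Hull_r(\uhpms)$ into a collection of nested annular regions at dyadic scales and bound the resistance from below by summing, via the series law, the resistances across each annulus. Concretely, set $r_k = 2^k$ and define $A_k = \Hull_{r_{k+1}}(\uhpms) \setminus \Hull_{r_k}(\uhpms)$. Let $\Reff_k$ denote the resistance across $A_k$ after contracting its inner and outer boundary components to single vertices. Any current going from $\rho$ out of $\Hull_r(\uhpms)$ must traverse each $A_k$ in turn, so
\[
\Reff^{\uhpms}\!\bigl(\rho \leftrightarrow \uhpms \setminus \Hull_r(\uhpms)\bigr) \;\geq\; \sum_{k=0}^{\lfloor \log_2 r \rfloor - 1} \Reff_k,
\]
and it will suffice to show that $\Reff_k \geq 1$ for a positive fraction of the indices $k$, almost surely.

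To control a single $\Reff_k$, I would appeal to the classical duality of resistances on an annular planar graph: if one contracts the two boundary cycles of an annular graph to single vertices, then the resistance between these two vertices equals the reciprocal of the resistance between the corresponding pair of faces in the dual map. Writing $\Reff_k^*$ for the dual resistance, this gives the exact identity $\Reff_k \cdot \Reff_k^* = 1$. Combined with the self-duality of $\uhpms$ (the ingredient advertised in the abstract, presumably derived from the Tutte-type construction underlying Theorem \ref{thm_defn_UIHPM}), $\Reff_k$ and $\Reff_k^*$ have the same law. Since their product is $1$, at least one of them is $\geq 1$, so $\Pr(\Reff_k \geq 1) \geq 1/2$ for every $k$.

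Upgrading this to an almost-sure statement is where peeling and RSW-type ideas enter. Peeling along layers around $\rho$ produces a Markovian exploration whose increments at scale $r_k$ are, by the standard peeling estimates, roughly independent of what has been explored at scales much smaller than $r_k$; this should reduce the problem to obtaining a uniform-in-scale positive lower bound on $\Pr(\Reff_k \geq 1)$ conditional on the past exploration. An RSW-type argument analogous to the one used to propagate crossing probabilities from rectangles to annuli in Bernoulli percolation is natural here: the duality already gives a fixed, unconditional probability $\geq 1/2$; one then has to transfer that probability into a conditional estimate that remains bounded away from $0$ uniformly in the inner configuration, by combining peeling and the planar structure. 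Once this is done, a second Borel--Cantelli / second-moment argument applied to $\sum_k \mathbf{1}_{\{\Reff_k \geq 1\}}$ yields that this sum is at least a constant fraction of $\log_2 r$, completing the proof.

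The main obstacle I expect is the passage from the pointwise duality identity $\Reff_k \cdot \Reff_k^* = 1$ to a usable conditional statement along the peeling exploration. Two issues interact: first, one must justify self-duality in the half-plane with a simple boundary, where the Tutte correspondence and the special infinite outer face create genuine boundary effects that do not appear for full-plane uniform maps; second, the annuli $A_k$ are defined by the graph metric, while peeling reveals the map according to a different filtration, so some work is needed to reconcile the geometric annuli with peeling hulls (and to control the probability that the inner and outer boundaries of $A_k$ are themselves reasonable) before the duality estimate can be iterated. The RSW analogy is the conceptual device that is meant to resolve precisely this tension.
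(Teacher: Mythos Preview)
Your outline has the right architecture --- nested separating layers, planar duality giving $\Reff\cdot\Reff^*=1$, and a law-of-large-numbers count of ``good'' scales --- but there is a genuine gap at the self-duality step. You assert that $\Reff_k$ and $\Reff_k^*$ have the same law because $\uhpms$ is self-dual. This is not correct as stated: the annulus $A_k=\Hull_{r_{k+1}}\setminus\Hull_{r_k}$ is defined via the \emph{primal} graph metric, so its dual is an annulus whose boundaries are determined by the \emph{dual} metric, and there is no reason for these to agree in law. Self-duality of the ambient map does not propagate to metric hulls. (You flag this tension yourself at the end, but ``RSW'' is not a resolution here: RSW transfers crossing estimates between domains of comparable aspect ratio; it does not manufacture the missing distributional symmetry.)

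The paper's fix is to abandon metric annuli entirely as the basic blocks. Working inside the \UIHPQ $Q_\infty$ (and its Tutte image $\uhpmg$), one runs a tailored peeling exploration at scale $L$ which, with probability bounded below by some $\delta>0$, produces a finite hole $H$ around the root whose top/bottom arcs are shorter than its left/right arcs. By the spatial Markov property, the quadrangulation filling $H$ is an \emph{exact} critical Boltzmann $Q_p$; the duality lemma is then applied to this Boltzmann block, where the required symmetry is literally the invariance of $Q_p$ under root translation and swapping of black/white vertices. This yields $\P(\Reff\geq 1)\geq 1/2$ for each block, conditionally on the past, with no approximate self-duality needed. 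The blocks are then stacked at exponentially growing scales $L[n]$, and only \emph{afterwards} does one relate the index $n$ to the graph radius $r$ via a separate (crude) estimate showing the $n$-th block sits within distance $A^n$ of the root. So the order of operations is the reverse of yours: construct exactly self-dual blocks by peeling first, convert to metric scale last.
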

Note that the first part implies that the resistance goes to $+\infty$ as $r \to +\infty$, so the second part is an immediate consequence of the first one. Moreover, the order $\log r$ of our lower bound is believed to be sharp for many models. To our knowledge, this is the first time that a logarithmic lower bound is established for a non-bounded-degree model. The recurrence proofs of \cite{GGN12} or \cite{AR18} relying on the theory of circle packings are only expected to give a lower bound of order $\log \log r$ if the degrees are unbounded. The cost to pay for this improved lower bound is a lack of universality, as the tools we use are very specific to self-dual models, which for example do not include the \UIPT.

\paragraph{Construction of the UIHPM.}
Since the \UIHPM $M_{\infty}$ has not yet been defined in the literature, let us now be more precise about what it is. We denote by $M_p$ a random map with a simple boundary of length $p$ and with critical Boltzmann distribution (i.e. the probability of sampling a given map with $e$ edges in total is proportional to $12^{-e}$). We also denote by $Q_{\infty}$ the Uniform Infinite Half-Plane Quadrangulation (\UIHPQ) with a simple boundary \cite{CCUIHPQ} (see Section \ref{Sec_UIHPM} for precise references and definitions of these objects). Finally, we recall that every planar quadrangulation is bipartite, so its vertices can be coloured in black and white in such a way that two neighbours always have opposite colours, and the root vertex is white.
A straightforward generalization of the Tutte bijection, which we call the \emph{\Tutte} since it is not bijective anymore (see Section \ref{subsec_Tutte_UIHPM} for details), associates to any quadrangulation $q$ with a simple boundary a map $\Ti{q}$ with a boundary, whose vertices are the white vertices of $q$, and whose edges are the white-white diagonals of the internal faces of $q$. The next result will be our definition of the \UIHPM.

\begin{theorem}\label{thm_defn_UIHPM}
We have the convergence in distribution
\[M_p \xrightarrow[p\to+\infty]{(d)} M_{\infty}\]
for the local topology, where $M_{\infty}$ is an infinite half-plane map with a simple boundary that we call the \UIHPM. Moreover, the map $M_{\infty}$ can be obtained from $Q_{\infty}$ by applying the \Tutte and pruning the boundary to make it simple.
\end{theorem}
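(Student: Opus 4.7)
The strategy is to transfer the known local convergence $Q_p \to \uhpqs$ of the \UIHPQ with simple boundary \cite{CCUIHPQ} to the map side, via the \Tutte $\Ti{\cdot}$ followed by boundary pruning. The candidate object $\uhpms$ will be defined as the pruned version of $\Ti{\uhpqs}$.

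The first step is a finite-volume correspondence. Writing out the Boltzmann weights $12^{-e}$ on critical Boltzmann maps with boundary and the corresponding weights on quadrangulations with boundary, I would show that if $\widetilde{Q}_{p'}$ denotes a critical Boltzmann quadrangulation with simple boundary of perimeter $2p'$, then the law of the pruned image of $\Ti{\widetilde{Q}_{p'}}$, conditioned on having simple boundary of length $p$, coincides with the law of $M_p$. The critical weights match directly because each internal face of a quadrangulation is in bijection with an edge of its \Tutte image; the remaining work is to reconcile the boundary contributions and check that the pruning step absorbs the discrepancy via a partition-function identity relating Boltzmann maps with a general boundary to Boltzmann maps with a simple boundary.

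The second step is to prove that both $\Ti{\cdot}$ and the pruning operation are continuous for the local topology on rooted maps. Continuity of $\Ti{\cdot}$ is straightforward, since the ball of radius $r$ around the root in $\Ti{q}$ is determined by a bounded neighborhood of the root in $q$: each edge of $\Ti{q}$ crosses exactly one internal face of $q$ incident to two of its endpoints. Continuity of pruning is the main technical obstacle: one must show that near the root, the non-simple boundary of $\Ti{\uhpqs}$ has only finitely many pinch points in each bounded window, with almost surely finite ears attached at each of them. I would handle this via peeling estimates along the boundary of $\uhpqs$, showing that the size of a given ear has a tail that decays fast enough for a Borel--Cantelli argument to force almost-sure local finiteness.

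Combining these two steps, $\widetilde{Q}_{p'} \to \uhpqs$ locally as $p'\to\infty$, together with the continuity of $\Ti{\cdot}$ and pruning, yields convergence of the pruned image of $\Ti{\widetilde{Q}_{p'}}$ to $\uhpms$. A final concentration argument, showing that the boundary length of the pruned image of $\widetilde{Q}_{p'}$ is concentrated around a linear function of $p'$ with high probability, allows one to switch from the $p'$-parametrization (quadrangulation boundary) to the target $p$-parametrization (map boundary) via a standard conditioning argument, and to conclude the stated convergence $M_p \to \uhpms$.
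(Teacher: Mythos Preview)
Your strategy has the right shape, but there is a genuine gap at the ``pruning'' step that the paper works quite hard to circumvent, and your finite-volume correspondence is not as clean as you suggest.

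The central issue is that pruning is not canonically defined for a \emph{finite} map with non-simple boundary: the beads of $\Ti{Q_{p'}}$ form a tree, and there is no distinguished ``infinite'' bead to keep. If you keep the root bead, or the largest bead, there is no reason the result should be a critical Boltzmann map of fixed perimeter, so your first step (``the pruned image of $\Ti{\widetilde Q_{p'}}$ conditioned on perimeter $p$ has the law of $M_p$'') does not go through. The paper resolves this by introducing three marked boundary edges on the quadrangulation: in the tree of beads of $\Ti{q}$ there is then a unique bead separating the three marks, and this \emph{Tutte core} plays the role of your pruned map. A bijective core decomposition (their $\Psi$) then shows that for a tri-marked Boltzmann quadrangulation with randomized perimeter, the Tutte core is exactly a tri-marked Boltzmann map with simple boundary and randomized perimeter; this is the clean finite-volume correspondence you are after, but it needs the marking.

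Relatedly, your continuity claim for pruning is too optimistic. The paper notes explicitly that the core-extraction operation is not continuous for the local topology in general (if the core drifts away from the root along a sequence, the limit of the cores need not be the core of the limit). They prove a conditional continuity lemma (their Lemma~\ref{lem_continuity_core}) under the extra hypotheses that the piece between the root and the core stabilizes and the core sizes diverge, and then verify these hypotheses probabilistically via the decomposition above. Your Borel--Cantelli sketch controls ear sizes but does not address which bead is being tracked. Finally, the passage from random to fixed perimeter is done in the paper not by a concentration argument but by first showing directly, via the asymptotics \eqref{asymptotic coefficients_M}, that $\P(m\subset M_p)$ converges for every sub-map $m$, and then identifying the limit using the randomized-perimeter convergence.
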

The pruning procedure is necessary because, as we will see in Section \ref{subsec_Tutte_UIHPM}, the image of $Q_{\infty}$ by the \Tutte does not have a simple boundary. The local limit statement justifies the interest of the model. On the other hand, the proof of Theorem \ref{main_thm} relies on the construction of $M_{\infty}$ using the \Tutte.

\paragraph{Ingredients of the proof of Theorem \ref{main_thm}.}
Let $\uhpmg$ be the image of $\uhpqs$ by the \Tutte (it is a half-plane map with a non-simple boundary). We will prove Theorem \ref{main_thm} for $\uhpmg$, and then transfer the resistance estimates from $\uhpmg$ to $\uhpms$ using the boundary pruning procedure. Our proof of Theorem \ref{main_thm} for $\uhpmg$ relies mostly on two ingredients:
\begin{itemize}
\item
the first one is the self-duality of the model. By combining this property with a classical result on the duality of resistances in planar maps, we obtain an estimate on the resistance between the “bottom” and the “top” parts of the boundary in a certain self-dual block. In particular, this resistance has a probability bounded from below to be at least $1$.
\item
The second ingredient consists of some peeling estimates on the \UIHPQ $Q_{\infty}$, which is naturally coupled with $\uhpmg$ via the \Tutte. These estimates show that, between $\rho$ and the complementary of the hull of radius $r$ in $\uhpmg$, we can find roughly $\log r$ disjoint blocks of the kind described above, so the resistance must be at least of order $\log r$.
\end{itemize}

\paragraph{An analogy with percolation.}
Finally, let us note that somewhat similar ideas have been used in a different context by Biskup, Ding and Goswami in \cite{BDG16}. The goal of \cite{BDG16} is to study the simple random walk on $\Z^2$ with random conductances given by the exponential of the Gaussian free field. The main idea of the proof of \cite{BDG16} is to start from the self-duality of a square (which plays the same role as our blocks), and then to use Russo--Seymour--Welsh-type ideas to extend estimates to more complicated domains. The RSW analogy is also of interest in our case: the use of logarithmically many blocks to separate the root from infinity is reminiscent of the proof of polynomial decay of one-arm events in percolation. We also believe that RSW-type ideas would be needed to obtain upper bounds on the resistances using similar arguments to ours. 

\paragraph{Structure of the paper.}
The structure of the paper is as follows. In Section \ref{Sec_UIHPM}, we define the objects involved in this work, and in particular the \UIHPM $\uhpms$. Note that we first define $\uhpms$ as the map built from the \UIHPQ by the \Tutte, followed by a boundary pruning procedure. In Section \ref{Sec_selfduality}, we prove a resistance estimate on a self-dual block. Section \ref{Sec_peeling} is devoted to peeling estimates allowing us to find one suitable self-dual block at some fixed scale in $\uhpms$. In Section \ref{Sec_proof_main_theorem}, we conclude the proof of Theorem \ref{main_thm} by applying repeatedly the results of Sections \ref{Sec_selfduality} and \ref{Sec_peeling} at different scales. In Section \ref{Sec_UIHPM_is_limit_of_Boltzmann_maps}, we prove Theorem \ref{thm_defn_UIHPM}, i.e. that the map $\uhpms$ built in Section 1 is the local limit of critical Boltzmann maps with simple boundary when the perimeter goes to infinity. We highlight that Section \ref{Sec_UIHPM_is_limit_of_Boltzmann_maps} is completely independent of the rest of this work. Finally, in Section \ref{Sec_perspectives} we make a few comments about the robustness of the argument and quickly discuss some perspectives.

\paragraph{Acknowledgments.} The authors acknowledge the support of ERC Advanced Grant 740943 GeoBrown. The first author thanks Nicolas Curien and Asaf Nachmias for useful discussions.

\tableofcontents

\section{Definitions and construction of the model}
\label{Sec_UIHPM}

\subsection{Basic definitions and combinatorics}
\label{subsec_basics}

\paragraph{Finite and infinite maps with a boundary.}
A \emph{planar map} (or just \emph{map}, since we only work with planar objects) is a planar graph embedded in the sphere, considered up to orientation-preserving homeomorphisms. We always consider \emph{rooted} maps, which come with a distinguished oriented edge called the \emph{root edge}. The starting vertex of the root edge is the \emph{root vertex}, and the face incident to the right of the root edge is the \emph{external face}. If $m$ is a map, we denote the boundary of its external face by $\partial m$; the boundary is \emph{simple} if $\partial m$ is a simple cycle, and \emph{generic} otherwise. A \emph{finite quadrangulation with a boundary} is a finite map in which all the non-external faces have degree $4$. We also stick to the convention that there is one “trivial” quadrangulation $\dagger$ of perimeter $2$ with $0$ internal face, consisting of two vertices and a single edge. Note that quadrangulations with a boundary are always bipartite. Therefore, we always assume that their vertices are coloured in white and black in such a way that two adjacent vertices always have different colors, and that the root edge is directed from a white vertex to a black one. By convention, we draw maps in the plane in such a way that their external face is the unbounded face.

We also consider infinite maps. All the infinite maps we consider have one end, and we draw them in the plane so that faces with finite degree are sent to compact subsets of the plane. A \emph{half-plane} map is an infinite map in which all the faces have finite degrees, except the external face, which is infinite. In particular, a \emph{half-plane quadrangulation} is a half-plane map in which all the finite faces have degree $4$. We usually represent half-plane maps with a simple boundary in such a way that the boundary is a horizontal line, and the external face is below it.

\paragraph{Local topology.}
If $m$ is a map, we denote by $d_m$ the graph distance on the set of vertices of $m$. We will be interested in \emph{local convergence} of planar maps. If $m$ is a map (or quadrangulation) with a boundary and $r\geq 0$, we denote by $[m]_r$ the map formed by the internal faces of $m$ incident to only vertices at graph distance at most $r$ from the root, plus the knowledge of which of the vertices and edges belong to $\partial m$\footnote{If $r$ is too small and $[m]_r$ does not contain any internal face, it is simply the trivial map with $1$ vertex and $0$ edge.}. If $m$ and $m'$ are two (finite or infinite) maps, then we write
\[\dloc(m,m')=\l( 1+\sup \{ r \geq 0 \ | \ \mbox{$[m]_r$ and $[m']_r$ are isomorphic} \} \r)^{-1}.\]
Then $\dloc$ is a distance called the \emph{local distance}, which makes the space of finite and infinite maps with a boundary a complete, separable space. Note that, while our definition of $[m]_r$ is not the most common definition of a ball, the local topology does not really depend on the definition of a ball that we pick, and this one will be more convenient for us later.

\paragraph{Combinatorics.}
Let us now recall a few formulas about the combinatorics of quadrangulations with a boundary. In all this work, we use lower case letters such as $q$ for deterministic maps, upper case letters like $Q$ for random maps, blackboard bold letters like $\Q$ for sets of maps, and calligraphic letters like $\QQ$ for generating functions. For every $p \geq 1$ and $n \geq 0$, let $\Q_{n,p}$ be the set of quadrangulations with a simple boundary of length $2p$ and $n$ internal faces. Then exact formulas for $\# \Q_{n,p}$ can be found in \cite{BG09} and give
\[ \# \Q_{n,p} \underset{n \to +\infty}{\sim} c_p \ 12^n \ n^{-5/2}. \]
In particular, for $x \geq 0$, the sum $\sum_{n \geq 0} \# \Q_{n,p} \times x^n$ is finite if and only if $x \leq 1/12$. For $p \geq 1$, we define
\begin{equation}\label{formula_quad_Zp}
\QQ_p \eqdef \sum_{n \geq 0} \# \Q_{n,p} \l( \frac{1}{12}\r)^n=12 \l( \frac{2}{3} \r)^p \frac{(3p-4)!}{(p-2)!(2p)!} \underset{p \to +\infty}{\sim} \frac{2}{9\sqrt {3\pi}} \times \l( \frac{9}{2} \r)^p \times p^{-5/2},
\end{equation}
where the formula is a consequence of the results of \cite{BG09}, and is given in a close form\footnote{We do not have exactly the same formula because in \cite{ACpercopeel} $n$ counts the internal vertices, whereas here $n$ counts the internal faces.} in \cite{ACpercopeel}. We also define a \emph{critical Boltzmann quadrangulation with a simple boundary of length $2p$}, denoted by $Q_p$, as a random quadrangulation such that, for every $n\geq 0$ and $q \in \Q_{n,p}$, we have $\P \l( Q_p=q \r)=\frac{1}{\QQ_p} \l( \frac{1}{12} \r)^n$.

We also need some combinatorial formulas about general maps with a simple boundary. For every $p \geq 1$, let $\monoM_{n,p}$ be the set of maps with a simple boundary of length $p$ and $n$ edges in total. These are in bijection via the \Tutte with a class of quadrangulations with boundary called \emph{quadrangulations with truncated boundary} of perimeter $2p$ and $n$ internal faces, which are enumerated in \cite{Kri05}. We describe the \Tutte carefully in Section \ref{subsec_Tutte_UIHPM}, but let us give right now consequences on the enumeration of $\monoM_{n,p}$ (the results that follow are stated in \cite{gall2017separating} for quadrangulations with truncated boundary). For any $p \geq 1$, we have
\[ \# \monoM_{n,p} \underset{n \to +\infty}{\sim} c'_p 12^n n^{-5/2} \]
for some constants $c'_p$ for $p\geq 1$. 
In particular, we can define 
\[ \MM_p \eqdef \sum_{n \geq 0} \# \monoM_{n,p} \l( \frac{1}{12} \r)^n, \]
and
\[ \MM \l( \frac{1}{12}, z \r) \eqdef \sum_{n \geq 0, \, p \geq 1} \# \monoM_{n,p} \l( \frac{1}{12} \r)^n z^p  = \sum_{p \geq 1} \MM_p z^p. \]
No explicit formula for $\MM_p$ is known, but we have
\begin{equation}\label{generating_function_M}
\MM \l( \frac{1}{12}, z \r) = \frac{1}{24} \sqrt{(18-z)(2-z)^3}-\frac{1}{2}+\frac{z}{2}-\frac{z^2}{24}.
\end{equation}
By standard singularity analysis (see e.g. \cite{FS09}), we deduce
\begin{equation}\label{asymptotic coefficients_M}
\MM_p \underset{n \to +\infty}{\sim} \frac{1}{2 \sqrt{2 \pi}} \times 2^p \times p^{-5/2}.
\end{equation}

\subsection{The Uniform Infinite Half-Plane Quadrangulation}\label{subsec_UIHPQ}

\paragraph{The UIHPQ.}
We now define and review basic properties of the Uniform Infinite Half-Plane Quadrangulation with a simple boundary (\UIHPQ), which is a natural model of random half-plane quadrangulation. The \UIHPQ, denoted by $Q_{\infty}$, is defined as the local limit in distribution as $p \to +\infty$ of the critical Boltzmann quadrangulations $Q_p$. This convergence is stated in \cite{ACpercopeel}, and can be proved in a similar way as its analog for triangulations \cite{Ang05}. See also \cite{CMboundary} and \cite{CCUIHPQ} for two other constructions of the \UIHPQ (with non-simple boundary) based on bijections with labelled trees (the \UIHPQ with simple boundary is then obtained by a pruning procedure). Moreover, the \UIHPQ is a.s. a \emph{one-ended} map, i.e. the complement of any finite set of vertices has only one infinite connected component.

In order to precisely describe the distribution of the \UIHPQ, we need to define a notion of \emph{sub-map} of a half-plane quadrangulation. 
Let $q_\infty$ be an infinite half-plane quadrangulation, and let $q$ be a finite map as in Figure \ref{fig_submap_quadrangulation}. 
A formal definition would be that $q$ is a finite quadrangulation with a simple boundary, where the boundary vertices and edges of $q$ bear labels indicating if and where they are glued to the infinite boundary of $q_\infty$, and at least one boundary edge of $q$ is glued on the boundary of $q_\infty$. We write $q \subset q_{\infty}$ and say that $q$ is a sub-map of $q_\infty$ if $q_{\infty}$ can be obtained from $q$ by filling all the finite holes with finite quadrangulations with a (simple) boundary, and the infinite hole with a half-plane quadrangulation with a simple boundary.

\begin{figure}
\begin{center}
\includegraphics[width=0.6\textwidth]{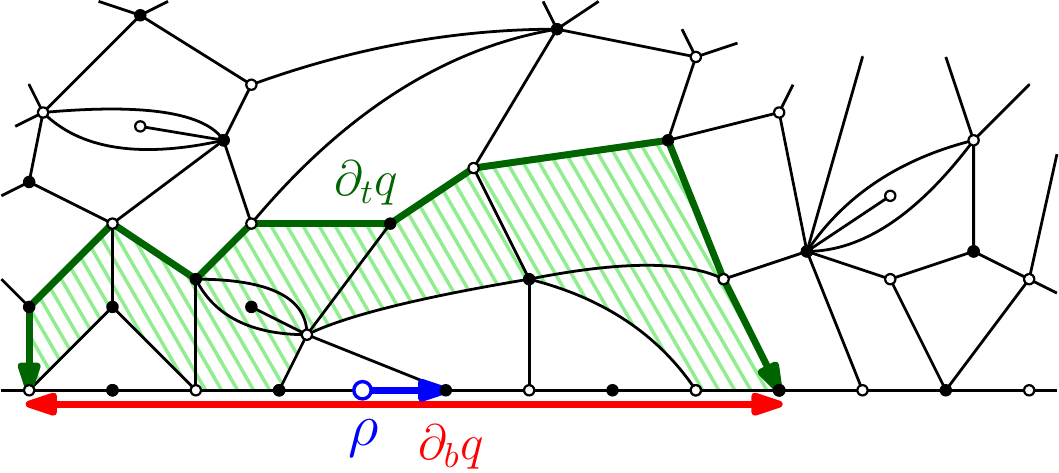}
\end{center}
\caption{A half-plane quadrangulation $q_{\infty}$ and a finite sub-map $q \subset q_{\infty}$ (in hatched green). The sub-map $q$ has 8 internal faces (i.e. $|q|=8$) and 2 finite holes. 
 Here we have $|\partial_b q|=|\partial_t q|=9$. There are two holes $h_1$ and $h_2$ with perimeters $4$ and $8$. Note that the root vertex has to be on $\partial_b q$, so if the root was far away on the right, we would need to add a segment on the right to both $\partial_b q$ and $\partial_t q$.}
\label{fig_submap_quadrangulation}
\end{figure}

If $q \subset q_{\infty}$ is a sub-map of a half-plane quadrangulation, we denote by $|q|$ the number of internal faces of $q$. We note that $q$ has a unique infinite hole $h_{\infty}$. The \emph{bottom boundary} of $q$, denoted by $\partial_b q$, is the largest segment of $\partial q_{\infty}$ containing the root vertex of $q_{\infty}$ and all the vertices that belong both to $\partial q_{\infty}$ and to $\partial q$ (see Figure \ref{fig_submap_quadrangulation}). The \emph{top boundary} of $q$, denoted by $\partial_t q$, is the segment of $\partial h_{\infty}$ between the two ends of the bottom boundary. It follows from this definition and the formula \eqref{formula_quad_Zp} that for any sub-map $q$ with no finite hole, we have
\begin{equation}\label{formula_distrib_UIHPQ}
\P \l( q \subset Q_{\infty} \r)=\l( \frac{1}{12} \r)^{|q|} \times \l( \frac{9}{2} \r)^{\frac{|\partial_t q|-|\partial_b q|}{2}},
\end{equation}
which characterizes the distribution of $Q_{\infty}$. Note that, although sub-maps with no finite hole are enough to describe the distribution of $Q_{\infty}$, it will be important later in this work to consider sub-maps with one or two finite holes.

\paragraph{Peeling the UIHPQ.}

An interesting consequence of \eqref{formula_distrib_UIHPQ}, also called the \emph{spatial Markov property}, is that it allows to explore $Q_{\infty}$ in a Markovian way via \emph{peeling explorations}. A \emph{peeling exploration} is an increasing sequence of sub-maps $(Q^i)_{i \geq 0}$ of $Q_{\infty}$, whose choice depends on a \emph{peeling algorithm}. A peeling algorithm $\AA$ is a way to assign to every possible sub-map $q$ an edge $\AA(q)$ on the boundary of the infinite hole of $q$, in a way that only depends on $q$. The edge $\AA(q)$ may be either incident to $q$, or to the boundary of the half-plane. The sequence $Q^i$ is then defined as follows: $Q^0$ is the empty map and, for every $i \geq 1$, the sub-map $Q^{i}$ is obtained from $Q^{i-1}$ by adding the unexplored face $F^{i}$ incident to $\AA(Q^{i-1})$ and by filling all the finite holes. Such explorations are called \emph{filled-in}. The peeling that we consider is a peeling \emph{with simple boundary}, i.e. when we explore a new face, we learn right now which of its vertices belong to the previous boundary, so that the map filling the infinite hole always has a simple boundary. This peeling for quadrangulations has first been studied in \cite{ACpercopeel} (see also \cite{R15} and \cite[Section 6.2]{CLGpeeling}). It is different from the \emph{peeling with general boundary} introduced in \cite{Bud15} and studied in \cite{CPeccot} in a systematic way.

At each peeling step, the peeled quadrangle may take different forms, which are described on Figure \ref{fig_peeling_cases}. The probability of each of these cases can be computed explicitly, see e.g. \cite{ACpercopeel} or \cite{R15}. However, we do not need the exact formulas, and we state right now the computations that will be useful for us. First, if $X_i=|\partial_t Q^i|-|\partial_b Q^i|$, then $X$ is a centered random walk on $\Z$, which lies in the domain of attraction of a completely asymmetric $3/2$-stable law. Therefore, it converges to a $3/2$-stable Lévy process with no positive jumps (see \cite[Section 6.2]{CLGpeeling} for more details). Second, the probability that the peeled face forms two finite holes with perimeters $2\ell_1$ and $2\ell_2$ (which corresponds to the three bottom cases of Figure \ref{fig_peeling_cases}) is equal to
\begin{equation}\label{eqn_bad_peeling_case}
\frac{1}{4} \l( \frac{2}{9} \r)^{\ell_1+\ell_2} \monoQQ_{\ell_1} \monoQQ_{\ell_2} \leq C \ell_1^{-5/2} \ell_2^{-5/2},
\end{equation}
where the $\QQ_\ell$ were introduced in \eqref{formula_quad_Zp} and $C$ is an absolute constant. The factor $\frac{1}{4}$ consists of a factor $\frac{1}{12}$ for the Boltzmann weight on the peeled face, and a factor $3$ for the three cases in the bottom of Figure \ref{fig_peeling_cases}. Finally, in every case, conditionally on the peeling case and the different boundary lengths which appear, the quadrangulations filling the finite holes are critical Boltzmann quadrangulations with a simple boundary of fixed length, and the quadrangulation filling the infinite hole has the same distribution as $Q_{\infty}$. This is known as the \emph{spatial Markov property} of $Q_{\infty}$.

\begin{figure}
\begin{center}
\includegraphics[scale=0.7]{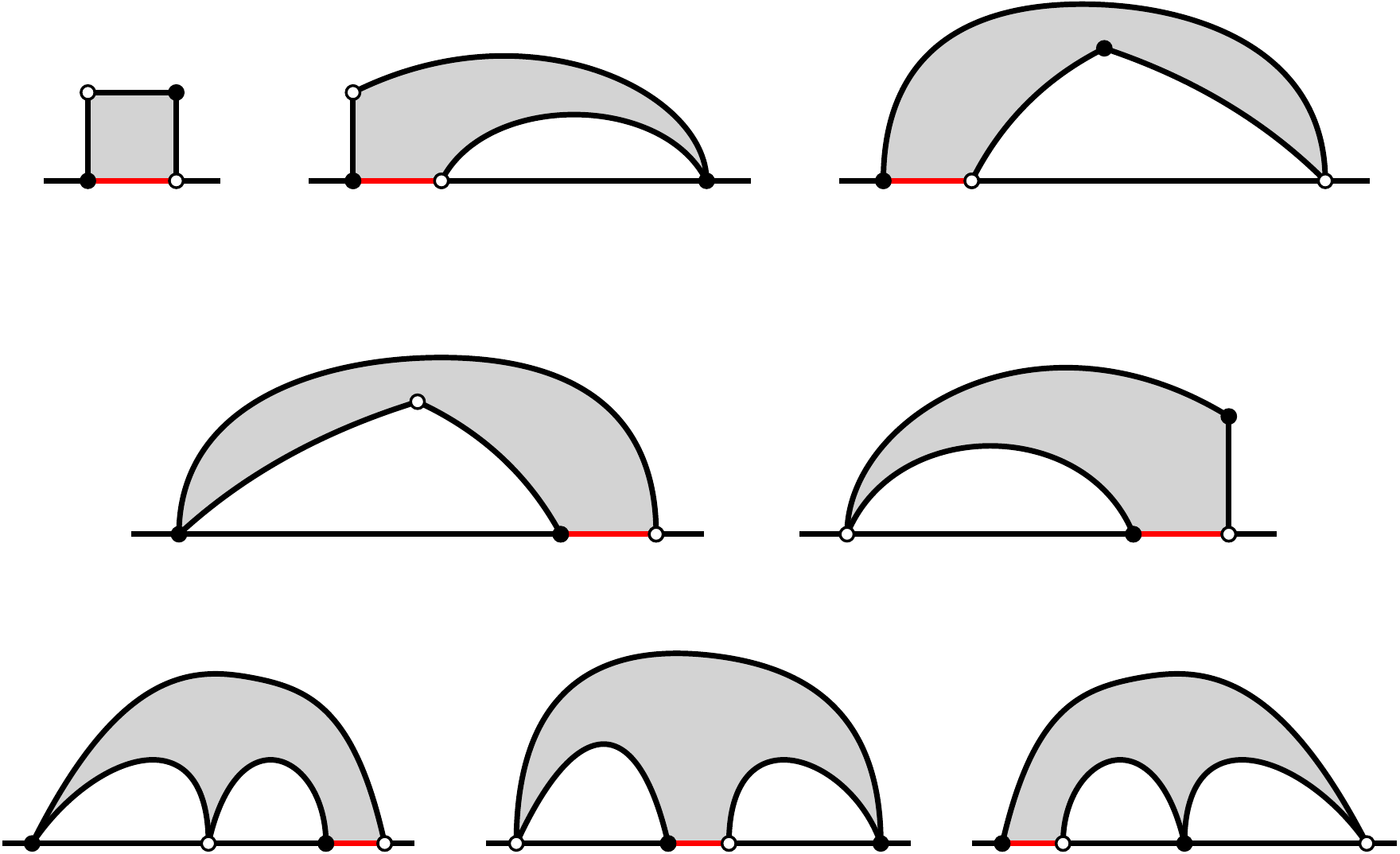}
\end{center}
\caption{The different peeling cases for quadrangulations with simple boundary (if the peeled edge has a white vertex on its left, the possible cases are the symmetric of those shown on the figure).}\label{fig_peeling_cases}
\end{figure}

In Section \ref{Sec_peeling}, we will use a peeling exploration which is stopped at a stopping time $\tau$, and where the holes formed by the peeling step at time $\tau$ are not filled. Hence, the maps $Q^i$ for $0 \leq i < \tau$ will have no finite hole, whereas $Q^{\tau}$ may have one or two finite holes.

\subsection{The \Tutte{} and the UIHPM}\label{subsec_Tutte_UIHPM}

The goal of this subsection is to construct the Uniform Infinite Half-Plane Map with a simple boundary (\UIHPM). We build it from the \UIHPQ $Q_{\infty}$, and will prove later in Section \ref{Sec_UIHPM_is_limit_of_Boltzmann_maps} that it is the local limit as $p \to +\infty$ of critical Boltzmann maps with a simple boundary of length $p$.

\paragraph{The \Tutte.}
Our construction relies on the well-known \emph{Tutte bijection} between finite quadrangulations and finite maps. Let us first briefly recall the bijection in the case where there is no boundary. Consider a rooted quadrangulation $q$ with $n$ faces, and color its vertices so that the root vertex is white and any pair of adjacent vertices have opposite colors (we can do this since quadrangulations are bipartite). The image of $q$ by the Tutte bijection is the map with $n$ edges obtained by drawing a diagonal between white corners in each face of $q$ and keeping only the white vertices of $q$ and the added diagonals. We root this map at the diagonal drawn in the face of $q$ that is incident to the left of the root edge of $q$, so that the root vertex is the same in $q$ and in its image.

We extend this bijection to quadrangulations with a simple boundary, at the cost of losing the bijectivity property. Consider a quadrangulation $q$ with a simple (finite or infinite) boundary. In every internal face of $q$, draw a diagonal between the two white corners. Remove all black vertices of $q$ and edges of $q$, keeping only white vertices and the added diagonals, and root the obtained map as before. This gives a connected (this follows from the fact that $q$ has a simple boundary) rooted map $\Ti{q}$ with a boundary, which is in general not simple (see Figure \ref{Fig image by Tutte}). Note that, by our choice of root, the root face of $\Ti{q}$ contains the root face of $q$.

\begin{figure}[h!]
\begin{center}
\includegraphics[width=0.4\textwidth]{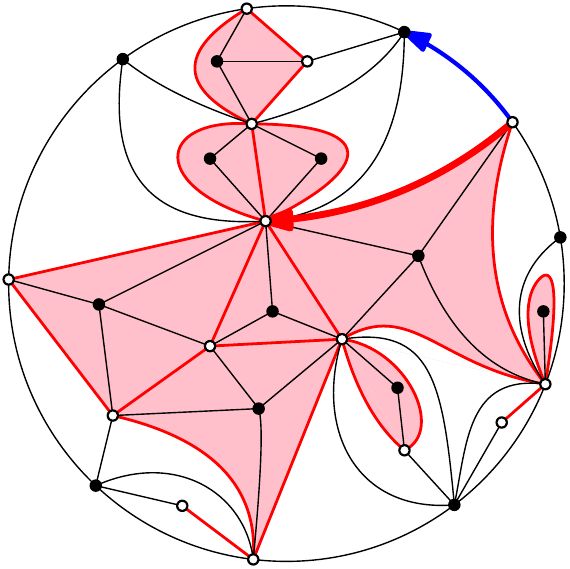}\qquad\qquad
\includegraphics[width=0.4\textwidth]{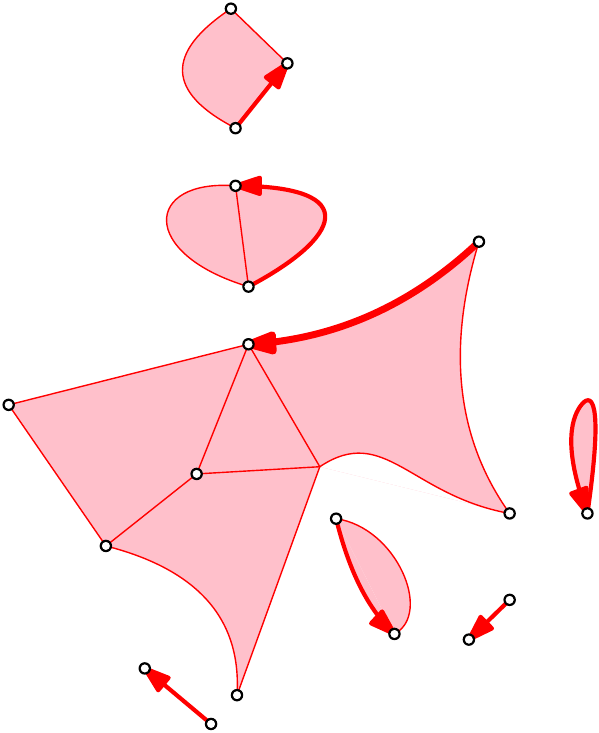}
\end{center}

\caption{The image of a quadrangulation with simple boundary $q$ by the \Tutte does not in general have a simple boundary. On the left, $q$ is drawn in black, with its root edge in blue; $\Ti{q}$ (in red) has its root edge in bold red. On the right, cutting $\Ti q$ at the pinchpoints of its boundary gives the decomposition of $\Ti{q}$ into \beads. Each bead is a map with simple boundary, rooted at their vertex closest to the root vertex of $\Ti{q}$.}
\label{Fig image by Tutte}
\end{figure}
 
Note also that this procedure is not a bijection: the two quadrangulations in \figref{Fig contreexemple Tutte is a bijection} have the same image. Therefore, we will call the function $q \to \Ti{q}$ the \Tutte.

\begin{figure}[h!]
\begin{center}
\includegraphics[width=0.3\textwidth]{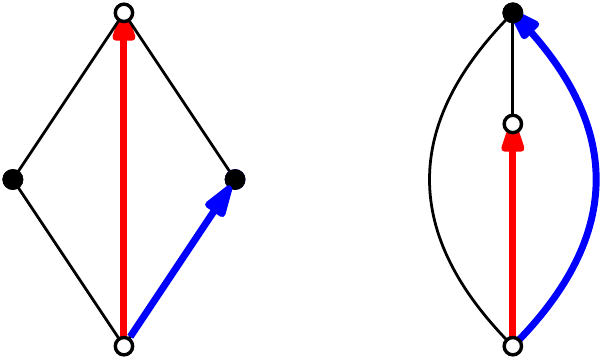}
\end{center}
\caption{The \Tutte is not injective: two quadrangulations that have the same image (in red) by the \Tutte.}
\label{Fig contreexemple Tutte is a bijection}
\end{figure}

Since we want to build a model with a simple boundary, a natural idea is to consider the “core with simple boundary” of the map $\Ti{Q_{\infty}}$. Before formalizing this idea, let us explain which are the quadrangulations sent to maps with a simple boundary by the \Tutte.
This will be useful in Section \ref{Sec_UIHPM_is_limit_of_Boltzmann_maps}. A \emph{quadrangulation with truncated boundary} is a quadrangulation with a simple boundary where all black vertices on the boundary have degree $2$ (see the left part of \figref{Fig super-simple boundary}). These quadrangulations were first considered and enumerated in \cite{Kri05} (see also \cite{gall2017separating, Leh18}), where they play a crucial role in the so-called skeleton decomposition of quadrangulations. Note that our definition of a truncated boundary agrees with \cite{Kri05} and is slightly different from the one in \cite{gall2017separating, Leh18}, where the faces incident to the boundary are actually triangles. However, both notions of truncated boundary are in bijection by simply cutting the faces incident to the boundary along their white-white diagonals. It is easy to check that for every $n, p\geq 1$, the \Tutte $\TT$ is in fact a bijection between the set of quadrangulations with truncated boundary of length $2p$ and $n$ inner faces
, and the set of maps with a simple boundary of length $p$ and $n$ edges in total. Therefore, the formulas given by \cite{Kri05} to count quadrangulations with truncated boundary also allow to count maps with a simple boundary, which justifies \eqref{generating_function_M} and \eqref{asymptotic coefficients_M}.

\begin{figure}[h!]
\begin{center}
\includegraphics[width=0.7\textwidth]{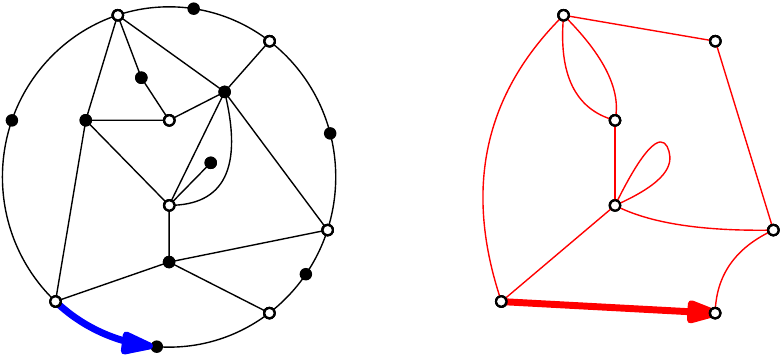}
\end{center}
\caption{The \Tutte $\TT$ is a bijection between the set of quadrangulations with truncated boundary, i.e. quadrangulations with truncated boundary and the set of maps with simple boundary. On the left, a quadrangulation $q$ with truncated boundary of length 10 and 11 inner faces, and on the right its image $\Ti{q}$.}
\label{Fig super-simple boundary}
\end{figure}

\paragraph{The UIHPM.}

If $m$ is a map, we call a vertex on $\partial m$ a \emph{pinchpoint} if at least two of its corners are incident to the external face. Cutting $m$ at its pinchpoints (see the right of \figref{Fig image by Tutte}) yields a collection of \emph{beads}, where each bead is a map with simple boundary. We root the beads at the vertex of their boundary that is the closest to the root vertex of $m$, as on \figref{Fig image by Tutte}, so that the external face of the bead contains the external face of $m$.

\begin{lemma}
\label{Lemma_exist_uniq_core}
Almost surely $\Ti{Q_\infty}$ has a unique infinite bead. This is a half-plane map with a simple boundary, that we call the \UIHPM and denote by $M_{\infty}$.
\end{lemma}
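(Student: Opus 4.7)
The plan is to prove the lemma by separately establishing uniqueness and existence of the infinite bead.

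\textbf{Uniqueness.} The first step is to transfer the almost sure one-endedness of $Q_\infty$ to $\Ti{Q_\infty}$. Given a finite set $W$ of white vertices of $\Ti{Q_\infty}$, set $\widetilde W := W \cup \bigcup_{w \in W}\{b : b \text{ black}, b \sim_{Q_\infty} w\}$, which is almost surely finite since every vertex of $Q_\infty$ has finite degree. I would check that any path in $Q_\infty$ avoiding $\widetilde W$ translates into a path in $\Ti{Q_\infty}$ avoiding $W$: around each intermediate black vertex $b \notin \widetilde W$ on the $Q_\infty$-path, no white neighbor of $b$ lies in $W$, and the consecutive white neighbors of $b$ on the path are joined in $\Ti{Q_\infty}$ by a chain of white-white diagonals whose intermediate vertices are all other white neighbors of $b$, hence outside $W$. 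This transfers one-endedness to $\Ti{Q_\infty}$. If two beads of $\Ti{Q_\infty}$ were then both infinite, removing any shared pinchpoint between them would leave two infinite components, contradicting one-endedness.

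\textbf{Existence.} Next I would produce an infinite connected subgraph of $\Ti{Q_\infty}$ contained in a single bead, which is therefore necessarily infinite. The idea is to use a peeling exploration of $Q_\infty$ along its boundary: each peeled boundary face contributes a white-white diagonal to $\Ti{Q_\infty}$ belonging to its boundary walk, and the exploration reveals infinitely many such faces. Using the spatial Markov property together with the hole-perimeter tail bound \eqref{eqn_bad_peeling_case}, I would argue that the ``safe'' peeling steps---those producing no enclosed finite hole and hence no pinchpoint in $\Ti{Q_\infty}$---occur with positive density almost surely. Concatenating consecutive safe steps produces an infinite connected region of $\Ti{Q_\infty}$ that contains no pinchpoint in its interior, hence sits inside a single, necessarily infinite, bead.

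Once existence and uniqueness are established, the unique infinite bead $M_\infty$ has simple boundary by the very definition of a bead, and its half-plane structure (one end, infinite external face) is inherited from the one-endedness of $\Ti{Q_\infty}$. The \textbf{main obstacle} is to make precise the combinatorial correspondence between peeling events in $Q_\infty$ and pinchpoint creation in $\Ti{Q_\infty}$: one must identify which configurations---multi-edges at a high-degree black boundary vertex, finite holes created during peeling, and coincidences between newly revealed white vertices and already-explored structure---actually produce pinchpoints, and ensure that safe peeling steps truly propagate membership in the same bead throughout the exploration. The heavy tail $\ell^{-5/2}$ appearing in \eqref{eqn_bad_peeling_case} and the $3/2$-stable scaling of the boundary process $X_i$ should make the density statement tractable.
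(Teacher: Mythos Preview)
Your uniqueness argument is correct and close in spirit to the paper's: the paper observes directly that two distinct beads are separated in $Q_\infty$ by a pair of edges meeting at a single inner white vertex, so one-endedness of $Q_\infty$ immediately rules out two infinite beads. Your transfer of one-endedness to $\Ti{Q_\infty}$ is a slightly longer route to the same conclusion, but it works.

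The existence argument, however, has a genuine gap, and it is more serious than the ``main obstacle'' paragraph suggests. The identification ``safe peeling step (no finite hole) $\Leftrightarrow$ no pinchpoint created'' is not correct. A pinchpoint of $\Ti{Q_\infty}$ separating the root from infinity is, concretely, a white vertex of $Q_\infty$ adjacent to two black \emph{boundary} vertices lying on opposite sides of the root. This has nothing to do with whether a given peeling step encloses a finite hole: a step can be ``safe'' in your sense and still reveal a white vertex which is, or will later be seen to be, adjacent to a far-away black boundary vertex, hence a pinchpoint. Conversely, many ``unsafe'' steps create no pinchpoint at all. So the density of safe steps, and the tail bound \eqref{eqn_bad_peeling_case} on hole perimeters, simply do not control the quantity you need. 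A second problem is that even with positive density of safe steps, unsafe steps occur infinitely often, so there is no infinite run of consecutive safe steps; you would still have to argue that the diagonals produced by separated safe runs lie in the same bead, which brings you right back to controlling pinchpoints directly.

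The paper's approach to existence bypasses peeling entirely. It uses the combinatorial characterization above to reduce existence to: only finitely many pairs $(u,v)$ of boundary vertices with $u$ left of the root, $v$ right of the root, satisfy $d_{Q_\infty}(u,v)=2$. This is then proved by a straight Borel--Cantelli argument, using translation invariance along the boundary and the known estimate (from the labelled-forest description of the general-boundary \UIHPQ in \cite{CCUIHPQ}) that $\P\bigl(d_{Q_\infty^g}(u_0^g,u_k^g)=2\bigr)\le C k^{-5/2}$, so that $\sum_{i,j>0}\P\bigl(d(u_{-i},u_j)=2\bigr)\le\sum_{i,j>0}C(i+j)^{-5/2}<\infty$. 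This is both shorter and avoids the delicate bookkeeping your peeling approach would require.
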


As explained in the Introduction, this will be our working definition of the \UIHPM, and we will prove in Section \ref{Sec_UIHPM_is_limit_of_Boltzmann_maps} (Theorem \ref{thm_defn_UIHPM}) that this coincides with the local limit of Boltzmann maps with a large, simple boundary.

\begin{proof}
We first note that any two beads are separated by two inner edges of $Q_\infty$, joining the same inner white vertex to two black boundary vertices, which may be the same or not, see \figref{Fig image by Tutte}. Therefore, the uniqueness of the infinite bead is an immediate consequence of the one-endedness of $Q_{\infty}$. Hence, it is enough to prove its existence.

To prove there is an infinite bead, it is enough to show that there are only finitely many pinchpoints separating the root from infinity. As can be seen on \figref{Fig image by Tutte}, a pinchpoint in the map $\Ti{q}$ is a white internal vertex of the quadrangulation $q$, which is linked to two black vertices of $\partial q$. If this pinchpoint separates the root vertex from infinity, then one of the black vertices is on the left of the root and the other is on the right. Therefore, to prove that $\Ti{Q_{\infty}}$ has an infinite bead, it is enough to show that there are only finitely many pairs $(u,v)$ of vertices of $\partial Q_{\infty}$, with $u$ on the left of the root and $v$ on its right, such that $d_{Q_{\infty}}(u,v)=2$.

We actually check this in the \UIHPQ \emph{with a general boundary}, that we denote by $Q_{\infty}^g$. This version of the \UIHPQ is defined e.g. in \cite{CMboundary}. Since $Q_{\infty}$ can be built from $Q_{\infty}^g$ by pruning the boundary \cite{CMboundary} in such a way that $\partial Q_{\infty} \subset \partial Q_{\infty}^g$, the claim for $Q_{\infty}^g$ implies the claim for $Q_{\infty}$. For this, let $(c_i)_{i \in \Z}$ be the list of corners of the external face of $Q_{\infty}^g$, and let $u_i^g$ be the vertex incident to $c_i$ (note that $i \to u_i^g$ is not injective since the boundary is not simple). Then we want to check that there are only finitely many pairs $(i,j)$ with $i,j>0$ such that $d_{Q_{\infty}^g}(u_{-i}^g, u_j^g)=2$.

For this, we rely on results from \cite{CCUIHPQ}, which gives a description of $Q_{\infty}^g$ in terms of a labelled forest where the labels correspond to the graph distances to the root vertex. In particular, $\l( d_{Q_{\infty}^g (u_0^g, u_i^g) }\r)_{i \in \Z}$ is a discrete Bessel process, so it is easy to control. More precisely, since $Q_{\infty}^g$ is stationary under root translation, we have
\begin{align*}
\sum_{i,j>0} \P \l( d_{Q_{\infty}^g}(u_{-i}^g, u_j^g) = 2 \r) &= \sum_{i,j>0} \P \l( d_{Q_{\infty}^g}(u_{0}^g, u_{i+j}^g) = 2 \r)\\
&\leq \sum_{i,j>0} C (i+j)^{-5/2}\\
&<+\infty,
\end{align*}
where $C$ is an absolute constant, and we used Lemma 3.5 of \cite{CCUIHPQ} to bound the probabilities. The claim follows by the Borel-Cantelli lemma. This concludes the proof.
\end{proof}

\section{A self-dual block}\label{Sec_selfduality}

The goal of this section is to prove, by using self-duality, that in the finite random maps obtained from critical Boltzmann quadrangulations via the Tutte mapping, a certain resistance is at least $1$ with positive probability. Our goal is Lemma \ref{lem_duality_random} below. These maps will later be used as “building blocks”. Our main tool is the classical Lemma \ref{lem_duality_resistances} about duality of resistances. Let $m$ be a finite planar map drawn in the plane, and let $a_1$ and $a_3$ be two distinct vertices incident to its outer face. We draw two infinite half-lines from $a_1$ and $a_3$ that split the outer face in two faces $a_2^*$ and $a_4^*$. We denote by $m^*$ the dual map whose vertices are $a_2^*$, $a_4^*$ and the internal faces of $m$, and whose edges are the dual edges of those of $m$ (the two infinite half-lines are not edges of $m$). An example can be seen on the right of Figure \ref{fig_duality_mp}. We recall that $\Reff^{m} (a \leftrightarrow b)$ is the electric resistance between two vertices $a$ and $b$ in a map $m$.

\begin{lemma}\label{lem_duality_resistances}
With the above notation, we have
\[ \Reff^{m} (a_1 \leftrightarrow a_3) = \l( \Reff^{m^*} (a_2^* \leftrightarrow a_4^*) \r)^{-1}.\]
\end{lemma}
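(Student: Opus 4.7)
The plan is to construct a harmonic conjugate of the current in $m$ on the dual map $m^*$ and then conclude by matching Dirichlet energies. Set $V := \Reff^{m}(a_1 \leftrightarrow a_3)$, and let $h$ be the real-valued function on the vertex set of $m$ with $h(a_1)=V$, $h(a_3)=0$, and $h$ harmonic at every other vertex. The associated antisymmetric edge function $\theta(\vec e) := h(u)-h(v)$ for $\vec e = (u,v)$ is then the unit current from $a_1$ to $a_3$. I would then try to define a dual potential $h^*$ on the vertex set of $m^*$ by imposing the relation $h^*(f)-h^*(f') = \theta(\vec e)$ whenever $e$ is the edge of $m$ separating the face $f$ (on the right of $\vec e$) from $f'$ (on the left), and integrating along paths in $m^*$ from an arbitrary reference dual vertex.

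The main (and most delicate) step is to verify that $h^*$ is well defined. For this I would use the added half-lines as follows: any cycle $\gamma$ in $m^*$ lies inside the slit plane obtained by removing the two half-lines, so $\gamma$ does not separate $a_1$ from $a_3$, both of which are connected to infinity along the boundary of the slit plane. Consequently the bounded region enclosed by $\gamma$ contains only inner vertices of $m$, where the divergence of $\theta$ vanishes, and the divergence theorem gives the required cycle identity $\sum_{e \text{ crossed}} \pm \theta(\vec e) = 0$. Granted this, harmonicity of $h^*$ at an inner face $f$ of $m$ is the cycle law $\sum_{e \in \partial f} \theta(\vec e) = 0$, which is automatic because $\theta$ is a gradient. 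Moreover, the net outflow of $h^*$ at $a_2^*$ equals a telescoping sum of $\pm\theta$ along the arc of $\partial m$ from $a_1$ to $a_3$ on the $a_2^*$ side, hence equals $\pm(h(a_1)-h(a_3)) = \pm V$. Thus, up to a global sign, $h^*$ is the potential of a current of strength $V$ flowing between $a_2^*$ and $a_4^*$ in $m^*$.

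To conclude, let $V^* := |h^*(a_2^*)-h^*(a_4^*)|$, so that by definition $V^* = V \cdot \Reff^{m^*}(a_2^* \leftrightarrow a_4^*)$. I would match the Dirichlet energies of $h$ and $h^*$ edge by edge: for each edge $e$ of $m$ and its dual $e^*$, one has $(h^*(f)-h^*(f'))^2 = \theta(\vec e)^2$, so summation by parts (using the harmonicity of $h$ and $h^*$ away from their respective sources and sinks) yields
\begin{equation*}
V \cdot 1 \;=\; \sum_{e \text{ edge of } m} \theta(\vec e)^2 \;=\; \sum_{e^* \text{ edge of } m^*} \bigl( h^*(f) - h^*(f') \bigr)^2 \;=\; V^* \cdot V.
\end{equation*}
Hence $V^* = 1$ and $\Reff^{m^*}(a_2^* \leftrightarrow a_4^*) = 1/V = 1/\Reff^m(a_1 \leftrightarrow a_3)$, as desired.
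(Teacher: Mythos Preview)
The paper does not actually prove Lemma~\ref{lem_duality_resistances}; it is stated as a ``classical lemma about duality of resistances'' and used as a black box in the proof of Lemma~\ref{lem_duality_random}. So there is no proof in the paper to compare against.

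Your argument is correct and is precisely the classical harmonic-conjugate (``rotate the current by $90^\circ$'') proof of this duality. The three points that require care are all handled properly: (i) well-definedness of $h^*$ follows because any dual cycle in the slit plane cannot enclose $a_1$ or $a_3$, so the enclosed divergence of $\theta$ vanishes; (ii) harmonicity of $h^*$ at interior dual vertices is the cycle law for the gradient $\theta$; (iii) the strength of the dual current at $a_2^*$ is the telescoping sum of $\theta$ along the boundary arc from $a_1$ to $a_3$, which equals $h(a_1)-h(a_3)=V$. The energy identity $\sum_e \theta(\vec e)^2 = V\cdot 1 = V^*\cdot V$ (voltage times current on each side, via the discrete Green identity) then gives $V^*=1$ and hence $\Reff^{m^*}(a_2^*\leftrightarrow a_4^*)=V^*/V=1/V$.

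One cosmetic remark: in the sentence ``Thus, up to a global sign, $h^*$ is the potential of a current of strength $V$\ldots'', you could note that the sign ambiguity is immaterial since only $|h^*(a_2^*)-h^*(a_4^*)|$ and the square of the gradient enter the final computation. Otherwise the write-up is fine.
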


We fix $p \geq 2$ and $p_1, p_2, p_3, p_4 \geq 1$ such that $\sum_{i=1}^4 p_i = p+2$. We recall that $Q_p$ is a critical Boltzmann quadrangulation with a simple boundary of length $2p$, and that $\Ti{Q_p}$ is the finite map obtained from $Q_p$ by the \Tutte (see Figure \ref{fig_duality_mp}). In particular, the vertices of $\Ti{Q_p}$ are the white vertices of $Q_p$. We represent the boundary of $Q_p$ as a rectangle in such a way that the top-left corner is a white vertex and the top (resp. left, bottom, right) side has $2p_1-1$ edges (resp. $2p_2-1$, $2p_3-1$, $2p_4-1$), as on the left of Figure \ref{fig_duality_mp}. We denote by $A_1$ (resp. $A_3$) the set of white vertices on the top (resp. bottom) part of the boundary, and by $A_2^*$ (resp. $A_4^*$) the set of black vertices on the left (resp. right) part of the boundary. We also write
\[R_{p_1, p_2, p_3, p_4}= \Reff^{\Ti{Q_p}} \l( A_1 \leftrightarrow A_3 \r).\]

\begin{lemma}\label{lem_duality_random}
Assume that $\max(p_1,p_3) \leq \min(p_2,p_4)$. Then we have
\[\P \l( R_{p_1, p_2, p_3, p_4} \geq 1 \r) \geq \frac{1}{2}.\]
\end{lemma}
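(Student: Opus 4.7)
The strategy is to combine the duality of resistances (Lemma \ref{lem_duality_resistances}) with the self-duality of the Boltzmann distribution of $Q_p$ under colour swap. Write $m = \Ti{Q_p}$, and let $m_b$ denote the analogous map obtained by drawing the black–black diagonal (rather than the white–white one) in each internal face of $Q_p$ and keeping only the black vertices.

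First I would apply Lemma \ref{lem_duality_resistances} to $m$, with sources $a_1$ and $a_3$ obtained by wiring the sets $A_1$ and $A_3$ respectively. My claim is that, for a suitable choice of the two half-lines (running just above the top and just below the bottom of the rectangle), the resulting dual map $m^*$ is canonically isomorphic to $m_b$, with the left–boundary blacks $A_2^*$ wired into $a_2^*$ and the right–boundary blacks $A_4^*$ wired into $a_4^*$. The justification is that in each internal face of $Q_p$ the white–white and the black–black diagonals cross exactly once, so edges of $m$ and $m_b$ are in a natural crossing correspondence that realises planar duality; the internal faces of $m$ are in bijection with the internal black vertices of $Q_p$, and the boundary blacks are split between the two halves of the external face of $m$ according to which side of the cut they lie on. Lemma \ref{lem_duality_resistances} then yields, pointwise,
\[ R_{p_1,p_2,p_3,p_4} \cdot R^{*} = 1, \qquad R^{*} := \Reff^{m_b}\l(A_2^* \leftrightarrow A_4^*\r). \]

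Next I would invoke self-duality: exchanging the two colours of $Q_p$ and cyclically shifting the four corner labels clockwise by one position is a weight-preserving bijection between critical Boltzmann quadrangulations with corner structure $(p_1,p_2,p_3,p_4)$ and those with corner structure $(p_2,p_3,p_4,p_1)$. Under this bijection the map $m_b$ on the original blacks becomes the white Tutte image of the rotated quadrangulation, and the pairs $(A_2^*,A_4^*)$ and $(A_1,A_3)$ are exchanged. Hence
\[ R^{*} \;\stackrel{d}{=}\; R_{p_2,p_3,p_4,p_1}. \]

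Finally, since $R \cdot R^{*} = 1$ pointwise, one has $\max(R, R^{*}) \geq 1$ almost surely, so $\P(R \geq 1) + \P(R^{*} \geq 1) \geq 1$. The hypothesis $\max(p_1,p_3) \leq \min(p_2,p_4)$ says that the rectangle $(p_1,p_2,p_3,p_4)$ is taller and narrower than its $90^{\circ}$ rotate $(p_2,p_3,p_4,p_1)$, and my aim would be to conclude via the stochastic domination $R_{p_1,p_2,p_3,p_4} \geq_{\mathrm{st}} R_{p_2,p_3,p_4,p_1}$. Combined with the distributional identity $R^{*} \stackrel{d}{=} R_{p_2,p_3,p_4,p_1}$ this gives $\P(R \geq 1) \geq \P(R^{*} \geq 1)$, and together with the preceding bound this yields $\P(R \geq 1) \geq 1/2$. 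The main obstacle is making the monotonicity step rigorous: a plausible route is to construct an explicit coupling of the two Boltzmann measures (for instance via a peeling exploration that reveals boundaries of fixed lengths) and to compare resistances in the coupled maps by Rayleigh's monotonicity law; alternatively, one might bypass stochastic domination by a direct symmetrisation argument that \emph{pointwise} trades the extra source/sink vertices on the long sides for resistance along the short sides.
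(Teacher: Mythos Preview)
Your overall architecture---duality via Lemma~\ref{lem_duality_resistances} together with the colour-swap self-duality of $Q_p$ to identify $R^*$ with $R_{p_2,p_3,p_4,p_1}$, then a monotonicity comparison under the hypothesis $\max(p_1,p_3)\le\min(p_2,p_4)$---is exactly the paper's. Your duality paragraph is essentially the paper's argument, spelled out in slightly more detail.

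Where you diverge is in the monotonicity step, and here you are making it much harder than it is. The observation you are missing is that $\sum_i p_i=p+2$ is invariant under the cyclic shift, so $R_{p_1,p_2,p_3,p_4}$ and $R_{p_2,p_3,p_4,p_1}$ are both resistances in \emph{the same} random map $\Ti{Q_p}$; there is nothing to couple. One simply chooses a second partition of $\partial Q_p$ into consecutive arcs $B_1,B_2^*,B_3,B_4^*$ of sizes $p_2,p_3,p_4,p_1$ such that $A_1\subset B_1$ and $A_3\subset B_3$; this is possible because $p_1\le\min(p_2,p_4)$ and $p_3\le\min(p_2,p_4)$, and the paper explains how to anchor one corner of the $B$-partition to a corner of the $A$-partition so that both inclusions hold simultaneously. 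Rayleigh monotonicity then gives
\[
\Reff^{\Ti{Q_p}}(A_1\leftrightarrow A_3)\ \ge\ \Reff^{\Ti{Q_p}}(B_1\leftrightarrow B_3)
\]
pointwise on the same $Q_p$, which is the stochastic domination you want. Your first proposed route (a peeling-based coupling of two Boltzmann measures) is unnecessary and would be substantially harder to carry out; your second alternative (``pointwise trade the extra source/sink vertices on the long sides'') is in fact the correct idea, but you present it as a speculative fallback rather than the main argument.
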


\begin{proof}
The proof relies on two remarks. First, the resistances of the form $R_{p_1, p_2, p_3, p_4}$ satisfy a monotonicity property: if $\sum_{i=1}^4 p_i = \sum_{i=1}^4 q_i=p+2$ with $p_1 \leq q_1$ and $p_3 \leq q_3$, but $p_2 \geq q_2$ and $p_4 \geq q_4$, then $R_{p_1, p_2, p_3, p_4}$ stochastically dominates $R_{q_1, q_2, q_3, q_4}$. Indeed, it is possible to define subsets $B_1, B_2^*, B_3, B_4^*$ of $\partial Q_p$ of sizes $q_1, \dots, q_4$ in a similar way as $A_1, A_2^*, A_3, A_4^*$, in such a way\footnote{More precisely, if $p_1+p_2 \geq q_1+q_2$, we can place the separation between $B_1$ and $B_4^*$ at the same place as the separation between $A_1$ and $A_4^*$. If this is not the case, then we have $p_3+p_4 \geq q_3+q_4$ and we place the separation between $B_2^*$ and $B_3$ at the same place as the separation between $A_2^*$ and $A_3$.} that $A_1 \subset B_1$ and $A_3 \subset B_3$. Therefore, we have 
\[ \Reff^{\Ti{Q_p}} \l( A_1 \leftrightarrow A_3 \r) \geq \Reff^{\Ti{Q_p}} \l( B_1 \leftrightarrow B_3 \r), \]
so it is possible to couple $R_{p_1, p_2, p_3, p_4}$ and $R_{q_1, q_2, q_3, q_4}$ in such a way that the first one is at least the second.

The second remark is that Lemma \ref{lem_duality_resistances} implies that $R_{p_2, p_3, p_4, p_1}$ has the same distribution as $R^{-1}_{p_1,p_2,p_3,p_4}$. Once this is proved, the conclusion is easy. Indeed, the monotonicity shows that $R_{p_1,p_2,p_3,p_4}$ dominates $R_{p_2, p_3, p_4, p_1}$, so $R_{p_1,p_2,p_3,p_4}$ dominates its inverse, so it has probability at least $\frac{1}{2}$ to be at least $1$.

We finally explain in details how we use Lemma \ref{lem_duality_resistances}. Let $\Tti{Q_p}$ be the map obtained from $\Ti{Q_p}$ by contracting on the one hand all the vertices of $A_1$ into a vertex $a_1$, and on the other hand all the vertices of $A_3$ into a vertex $a_3$. Then $R_{p_1,p_2,p_3,p_4}$ is equal to the resistance between $a_1$ and $a_3$ in $\Tti{Q_p}$, and we are precisely in the context of Lemma \ref{lem_duality_resistances}. More precisely, let $\Tti{Q_p}^*$ be the dual map of $\Tti{Q_p}$, whose vertices correspond to the inner faces of $\Tti{Q_p}$, and to both sides of its outer face (denoted by $a^*_2$ and $a^*_4$). By Lemma \ref{lem_duality_resistances}, we have
\[ \Reff^{\Tti{Q_p}} (a_1 \leftrightarrow a_3)= \Reff^{\Tti{Q_p}^*} (a_2^* \leftrightarrow a_4^*)^{-1}.\]
On the other hand, as can be seen on the right of Figure \ref{fig_duality_mp}, the map $\Tti{Q_p}^*$ can be obtained from $Q_p$ in the exact same way as $\Tti{Q_p}$ by exchanging the roles of the black and white vertices, and by contracting the vertices of $A_2$ into $a^*_2$ and the vertices of $A_4$ into $a^*_4$. Since the distribution of $Q_p$ is invariant under translating the root an exchanging black and white vertices, we deduce that $\Tti{Q_p}^*$ has the same distribution as $\Tti{Q_p}$ obtained for $(p_2,p_3,p_4,p_1)$, so $\Reff^{\Tti{Q_p}^*} (a_2 \leftrightarrow a_4)$ has the same distribution as $R_{p_2,p_3,p_4,p_1}$, which concludes the proof.

\begin{figure}
\begin{center}
\includegraphics[width=\textwidth]{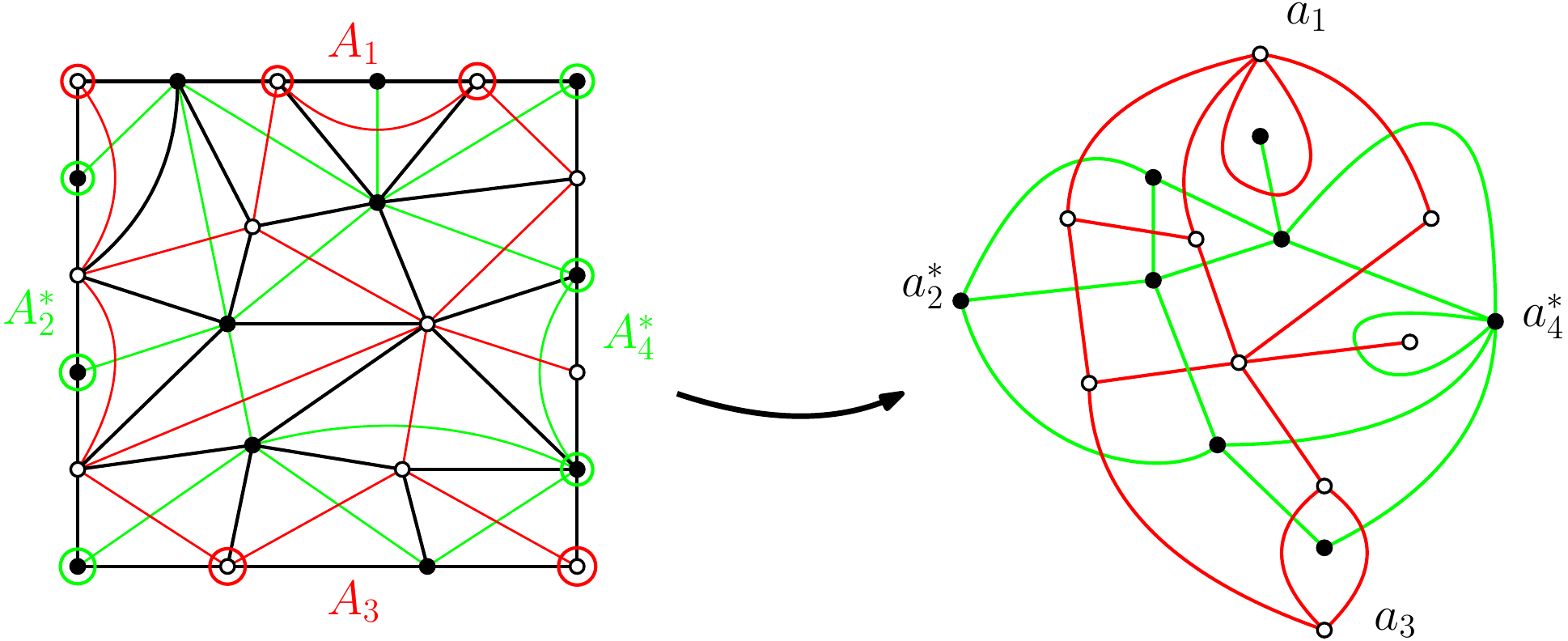}
\end{center}
\caption{On the left, the quadrangulation $Q_p$ (in black), the map $\Ti{Q_p}$ (in red), and its dual $\Ti{Q_p}^*$ (in green). The vertices of $A_1$ and $A_3$ are circled in red, and the vertices of $A_2^*$ and $A_4^*$ are circled in green. Here we have $p_1=p_2=p_4=3$ and $p_3=2$. On the right, the maps $\Tti{Q_p}$ (in red) and its dual $\Tti{Q_p}^*$ (in green). We see that they are built from $Q_p$ in the same way, up to exchanging the roles of white and black vertices.}\label{fig_duality_mp}
\end{figure}
\end{proof}

\begin{rek}
The only property of $Q_p$ that we used here is the invariance of its distribution under root translation and exchange of the colors. In particular, it remains true if $Q_p$ is Boltzmann but non-critical, if it has a fixed number of vertices, or if it is biased by the partition function of some statistical physics model.
\end{rek}

\section{Peeling estimates in the UIHPQ}
\label{Sec_peeling}

As explained in the Introduction, the idea of the proof of Theorem \ref{main_thm} is to find infinitely many disjoint, independent Boltzmann “blocks”, which all separate the root from infinity, and to apply Lemma \ref{lem_duality_random} to each of them. Our goal in this section is to explain how we find one such block. The properties that we want this block to satisfy are summed up in Proposition \ref{Prop block construction}. Our main tool for this is the peeling process of the \UIHPQ $Q_{\infty}$ (see Section \ref{subsec_UIHPQ}).

In all this section, we fix an arbitrary even integer $L \geq 2$, which should be seen as a “scale parameter”. In Section \ref{Sec_proof_main_theorem} we will repeatedly use Proposition \ref{Prop block construction} for exponentially increasing values of $L$. We recall from Figure \ref{fig_submap_quadrangulation} that a sub-map $q$ of $Q_{\infty}$ is a finite part of $Q_{\infty}$ which may have holes filled with finite connected components of $Q_{\infty} \backslash q$. We are particularly interested in the case where the root vertex $\rho$ of $Q_\infty$, as well as the first $L$ edges on its right along $\partial Q_{\infty}$, lie on the boundary of a hole $h$ of $q$. In this case, we split $\partial h$ into four segments:
\begin{itemize}
\item
the top boundary $\partial_t h$, which is the set of edges of $\partial h$ that are not in $\partial Q_{\infty}$,
\item
the left boundary $\partial_{\ell} h$, which is the set of edges of $\partial h \cap \partial Q_{\infty}$ on the left of $\rho$,
\item
the bottom boundary $\partial_b h$, which consists of the first $L$ edges of $\partial Q_\infty$ on the right of $\rho$,
\item
the right boundary $\partial_{r} h$, which is the set of edges of $\partial h \cap \partial Q_{\infty}$ on the right of $\rho$ which do not belong to $\partial_b h$.
\end{itemize}
For example, on Figure \ref{fig_submap_quadrangulation}, if $L=2$, we have $|\partial_t h|=3$, $|\partial_{\ell} h|=1$, $|\partial_{b} h|=L=2$ and $|\partial_r h|=2$.

\begin{proposition}
\label{Prop block construction}
There are constants $C, \delta>0$ independent of $L$ such that the following holds. We can build a finite sub-map $Q$ of $Q_{\infty}$ such that
\begin{enumerate}
\item\label{prop_item_markov}
conditionally on $Q$, the connected components of $\uhpqs \backslash Q$ are independent, the infinite connected component has the same distribution as $\uhpqs$ and the finite components are critical Boltzmann quadrangulations with the right perimeters;
\item\label{prop_item_dimensions_hole}
with probability at least $\delta$, the sub-map $Q$ has a hole $H$ such that the root edge of $Q_\infty$, as well as the $L$ edges of $\partial Q_\infty$ on its right, lie on $\partial H$, and $H$ satisfies $|\partial_t H| \leq L$, $|\partial_{\ell} H| \geq L$, $|\partial_{b} H|=L$ and $|\partial_r H| \geq L$;
\item\label{prop_item_not_too_quick}
we have $\E \left[ \frac{|\partial_t Q|}{L} \right] \leq C$ and $\E \left[ \frac{|\partial_b Q|}{L} \right] \leq C$.
\end{enumerate}
\end{proposition}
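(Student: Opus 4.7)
The plan is to construct $Q$ as $Q^\tau$ for a filled-in peeling exploration $(Q^i)_{i\geq 0}$ of $\uhpqs$ using a carefully chosen peeling algorithm $\AA$ and stopped at an appropriate stopping time $\tau$. Item \ref{prop_item_markov} is then automatic from the spatial Markov property of filled-in peeling recalled in Section \ref{subsec_UIHPQ}, so the real task is to choose $\AA$ and $\tau$ so that items \ref{prop_item_dimensions_hole} and \ref{prop_item_not_too_quick} hold with constants not depending on $L$.

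A natural choice for $\AA$ is to always peel an edge on the boundary of the infinite hole of $Q^{i-1}$ close to the $L$-segment (the root edge together with the $L$ edges on its right along $\partial\uhpqs$); for instance, the edge immediately to the left of the leftmost still-exposed vertex of that segment. This grows an ``arm'' of revealed faces rooted near $\rho$. By the $3/2$-stable scaling of the boundary walk $X_i = |\partial_t Q^i|-|\partial_b Q^i|$ recalled in Section \ref{subsec_UIHPQ}, the natural time scale is $n \asymp L^{3/2}$: the walk $X_n$ is then of typical size $L$, and integrating the tail estimate \eqref{eqn_bad_peeling_case} on the perimeters of holes created at a single peeling step shows that within the window $[1, KL^{3/2}]$ there is, with probability bounded below uniformly in $L$, at least one peeling step closing the arm by attaching its quadrangle to $\partial\uhpqs$ to the right of the $L$-segment with a jump of magnitude $\asymp L$, producing a finite hole $H$ enclosing the entire segment.

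I would take $\tau$ to be the first peeling step producing such a closing quadrangle whose resulting hole $H$ moreover satisfies all four dimensional inequalities $|\partial_t H|\leq L$, $|\partial_\ell H|\geq L$, $|\partial_b H|=L$ and $|\partial_r H|\geq L$, truncated by the deterministic conditions $\tau \leq KL^{3/2}$ and $|\partial_t Q^i|\leq KL$ for all $i\leq\tau$, for some large absolute constant $K$. On the truncation event, $Q=Q^\tau$ is a ``bad'' configuration, which does not affect item \ref{prop_item_markov}. For item \ref{prop_item_dimensions_hole}, the good event (stopping not due to truncation) should have probability at least $\delta > 0$, via a scaling argument using the convergence of the rescaled peeling process to a $3/2$-stable Lévy process with no positive jumps, together with the translation invariance of the peeling along $\partial\uhpqs$.

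For item \ref{prop_item_not_too_quick}, the truncation gives $|\partial_t Q|\leq KL$ deterministically, and $|\partial_b Q|$ is bounded by $O(L)$ plus the number of $\partial\uhpqs$-edges swept by the closing quadrangle on the good event, whose $\ell^{-3/2}$ tail at scale $L$ still contributes $O(L)$ in expectation. The main technical obstacle I foresee is the joint control of all four dimensional inequalities on $H$: producing a closing step merely enclosing the $L$-segment is easy, but simultaneously forcing $|\partial_t H|\leq L$ together with $|\partial_\ell H|,|\partial_r H|\geq L$ requires a careful analysis of the shape of the arm at time $\tau-1$ and of the conditional law of the closing jump, which is where most of the detailed technical work must go.
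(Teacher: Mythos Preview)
Your overall plan---construct $Q$ as $Q^\tau$ for a peeling exploration stopped at a time of order $L^{3/2}$, deduce item~\ref{prop_item_markov} from the spatial Markov property, and use the $3/2$-stable scaling of the boundary walk for item~\ref{prop_item_dimensions_hole}---is exactly the paper's strategy. However, your concrete choices of peeling algorithm and of stopping time both differ from the paper's, and this is where the substance lies.

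The paper does \emph{not} peel next to the $L$-segment. Instead, it peels the edge whose right endpoint is at distance exactly $\tfrac{3}{2}L$ from $\rho$ along the boundary of the infinite hole. This choice is what makes the four dimensional inequalities on $H$ tractable: since no peeling step before time $\tau$ can touch the segment of length $L$ on $\partial Q_\infty$ just left of $\rho$, the bound $|\partial_\ell H|\geq L$ is automatic; and since the peeled edge sits at distance $\tfrac{3}{2}L$ from $\rho$, the top part $\partial_t H$ is at most $\tfrac{L}{2}+2$ deterministically. With your algorithm (peel just left of the leftmost exposed vertex of the $L$-segment) neither of these facts is available, and producing $|\partial_\ell H|\geq L$ simultaneously with $|\partial_t H|\leq L$ looks genuinely hard---this is precisely the ``main technical obstacle'' you flag, and the paper's answer is to change the algorithm rather than fight through it.

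The paper's stopping time is also different: it sets $\tau=\min(\ttime,\twalk,\tjump)$ with $\ttime=L^{3/2}$, $\twalk=\min\{i:|X_i|\geq L/2\}$ and $\tjump=\min\{i:|\Delta X_i|\geq L/2\}$, and does \emph{not} fill the holes created at time $\tau$. It then calls the exploration \emph{successful} if $\tau=\tjump$ and the last face lands at distance $\geq 3L$ on the right of $\rho$; Lemmas~\ref{lem_successful_hole} and~\ref{lem_probability_successful} show this has probability bounded below and (after ruling out a rare double-hit event of probability $O(L^{-3/2})$) forces item~\ref{prop_item_dimensions_hole}. For item~\ref{prop_item_not_too_quick}, the paper proves the deterministic bound $|\partial_b Q^\tau|\leq \tfrac{3}{2}L+2-2\min_{[0,\tau]}X$ (Lemma~\ref{lem_boundary_and_infimum}), exploiting both the $\tfrac{3}{2}L$-algorithm and the stopping condition $\twalk$, and then integrates the tail of $\min X$.

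Your route to item~\ref{prop_item_not_too_quick} via the truncation $|\partial_t Q^i|\leq KL$ is actually a reasonable alternative: since the top boundary increases by at most $3$ per step, $|\partial_t Q^\tau|\leq KL+3$ always, and then $\E[|\partial_b Q^\tau|]=\E[|\partial_t Q^\tau|]-\E[X_\tau]=\E[|\partial_t Q^\tau|]$ by optional stopping (the walk $X$ is an $L^1$-martingale and $\tau\leq KL^{3/2}$ is bounded). But your written argument---bounding $|\partial_b Q|$ only by the closing jump ``on the good event''---is incomplete: large negative jumps of $X$ at earlier steps, or on the truncation event, also enlarge $|\partial_b Q|$, and you do not account for them. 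The optional stopping argument, or the paper's Lemma~\ref{lem_boundary_and_infimum}, is what is actually needed.
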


Roughly speaking, the role of the second assumption is to allow us to apply Lemma \ref{lem_duality_random} to the quadrangulation filling the hole $H$. The role of the third assumption is to guarantee that $L$ will not grow too quickly when we apply Proposition \ref{Prop block construction} repeatedly in Section \ref{Sec_proof_main_theorem}. If we are only interested in proving the recurrence of $\uhpms$ with no quantitative bound, then the third assumption is not necessary.

Our goal is now to prove Proposition \ref{Prop block construction}. The idea is to explore $\uhpqs$ with the right peeling algorithm, starting at distance $\frac{3}{2}L$ on the left of the root edge, and to stop the exploration when it hits a point of $\partial Q_\infty$ far enough on the right of the root, if this does not occur too late. More precisely, we perform a peeling exploration of the kind described in Section \ref{subsec_UIHPQ}, and denote by $Q^i$ the explored sub-map after $i$ steps. The peeling algorithm that we use is the following: at each step $i \geq 1$, the peeled edge $\AA(Q^{i-1})$ is such that the right endpoint of $\AA(Q^{i-1})$ lies at distance $\frac{3}{2}L$ from the root vertex along the top boundary of $Q^{i-1}$. We would like to stop the exploration when it hits a point of $\partial Q_{\infty}$ which is either on the right of the root, or at distance at most $L$ on its left along $\partial Q_{\infty}$.

However, it is convenient (especially for Item \ref{prop_item_not_too_quick} of Proposition \ref{Prop block construction}) to stop the exploration under wider conditions. To define precisely the time when we stop the exploration, we use the random walk related to the peeling process of $Q_{\infty}$. More precisely, for $i \geq 0$, we recall that $X_i$ is the difference between the lengths of the top and the bottom boundary of $Q^i$. We also write $\Delta X_i=X_i-X_{i-1}$ for every $i\geq 1$. We recall from Section \ref{subsec_UIHPQ} that $X$ is a centered random walk on $\Z$, which means that the increments $\Delta X_i$ are i.i.d. with mean $0$. We also define
\begin{align*}
\ttime &= L^{3/2},\\
\twalk &= \min \{ i \geq 0 \ \big| \ |X_i| \geq \frac{1}{2} L \},\\
\tjump &= \min \{ i \geq 1 \ \big| \  |\Delta X_i| \geq \frac{1}{2} L \},\\
\tau &= \min(\ttime, \twalk, \tjump).
\end{align*}

The sub-map $Q$ in the statement of Proposition \ref{Prop block construction} is the map $Q^{\tau}$, i.e. the explored map at time $\tau$. As explained in the very end of Section \ref{subsec_UIHPQ}, we recall that the finite holes formed at time $\tau$ are not filled, so the map $Q^{\tau}$ may have one or two finite holes (one of them will be the $H$ of Item \ref{prop_item_dimensions_hole} of Proposition \ref{Prop block construction}). Item \ref{prop_item_markov} of Proposition \ref{Prop block construction} is then an immediate consequence of the spatial Markov property of $\uhpqs$.
Note also that for every $i$, the right end of the peeled edge $\AA(Q^i)$ always lies at distance $\frac{3}{2}L$ from the root along the boundary of the infinite hole of $Q^i$. Hence, if a boundary point on the right of the root or at distance $\leq L$ on its left is hit, then at least $\frac{1}{2} L$ boundary edges are swallowed. Therefore, we have $\Delta X_i \leq - \frac{1}{2}L$ and the exploration is indeed stopped. In particular, for $i<\tau$, the explored map $Q^i$ does not contain the root $\rho$.

We now check Item \ref{prop_item_dimensions_hole} of Proposition \ref{Prop block construction}. We recall that $F^i$ is the peeled face at time $i$, i.e. $F^i \in Q^{i} \backslash Q^{i-1}$ for $i \geq 1$. We call the exploration \emph{successful} if $\tau=\tjump$ and if the quadrangle $F^{\tau}$ has a vertex on $\partial \uhpqs$, at distance at least $3L$ on the right of the root. Item \ref{prop_item_dimensions_hole} is the combination of the next two lemmas.

\begin{lemma}\label{lem_successful_hole}
On the event where the exploration is successful, the probability that Item \ref{prop_item_dimensions_hole} of Proposition \ref{Prop block construction} is not satisfied goes to $0$ as $L \to +\infty$.
\end{lemma}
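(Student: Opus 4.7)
The plan is to take as candidate hole $H$ for Item~2 the finite component of $Q_\infty \setminus Q^\tau$ whose boundary contains the root; this hole is well defined on the event of success because the peeled face $F^\tau$ reaches a vertex $v$ on $\partial Q_\infty$ at distance $\geq 3L$ to the right of the root. Since $Q^{\tau-1}$ naturally sits to the left of the root, the root edge together with the next $L$ edges of $\partial Q_\infty$ will lie on $\partial H$; the conditions $|\partial_b H| = L$ and $|\partial_r H| \geq L$ will then follow from $v - \rho \geq 3L$, and what remains to check are the two quantitative conditions $|\partial_\ell H| \geq L$ and $|\partial_t H| \leq L$.

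Both remaining conditions will reduce to a single geometric bound on $Q^{\tau-1}$: that its rightmost vertex on $\partial Q_\infty$, call it $R^{\tau-1}$, satisfies $R^{\tau-1} \leq \rho - L$. Indeed $|\partial_\ell H| = \rho - R^{\tau-1}$, and the peeling algorithm's rule (the peeled edge lies at distance $3L/2$ from the root along the top boundary) will translate $|\partial_t H|$ into a quantity decreasing in $\rho - R^{\tau-1}$ in such a way that $\rho - R^{\tau-1} \geq L$ already forces $|\partial_t H| \leq L$. The key probabilistic estimate is therefore
\[ \P\bigl(R^{\tau-1} > \rho - L\bigr) \xrightarrow[L\to\infty]{} 0. \]

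To establish this I will trace the evolution of $R^i$ from $R^0 = \rho - 3L/2$ through the filled-in peeling. A direct case analysis using the walk constraint $|\Delta X_i| < L/2$ valid for $i < \tau$ will show that every \emph{single-sided} right extension of the bottom boundary, in which only the right new vertex of $F^i$ is identified with $\partial Q_\infty$, must keep $R^i \leq \rho - L$: a larger extension would produce a walk jump already exceeding the $\tjump$ threshold. The only mechanism that could push $R^i$ past $\rho - L$ will be the \emph{two-hole} peeling step of Figure~\ref{fig_peeling_cases} (bottom row), in which the peeled face is identified with boundary vertices on $\partial Q_\infty$ on both sides of the exploration simultaneously; here the walk constraint is weaker and a single step can extend $R^i$ by $\Theta(L)$.

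The main obstacle will be to show that such a harmful two-hole peel occurs during the $\tau \leq L^{3/2}$ steps of the exploration with probability tending to $0$. Two ingredients will be combined: first, the tail bound \eqref{eqn_bad_peeling_case} gives probability $O(\ell^{-3/2})$ per step for a two-hole peel with right-hole half-perimeter $\geq \ell$ (after summing out the left-hole contribution); second, such a peel can avoid triggering $\tjump$ only while the top boundary satisfies $|\partial_t Q^{i-1}| = O(L)$, a regime which by the $3/2$-stable scaling of the walk $X$ lasts only $O(L)$ out of the $O(L^{3/2})$ potential steps. A union bound over this restricted window then yields an overall probability of order $O(L^{-1/2}) = o_L(1)$, as required. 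The conditioning on success is handled separately by relying on the lower bound $\P(\text{success}) \geq c > 0$ established in the next lemma.
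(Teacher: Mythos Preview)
There is a genuine gap, and it stems from a misidentification of where the obstruction lies.

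Your reduction to the inequality $R^{\tau-1}\le\rho-L$ is both unnecessary and insufficient. It is unnecessary because this inequality holds \emph{deterministically}: by the choice of the peeling algorithm, the right endpoint of the peeled edge always sits at distance $\tfrac32 L$ from the root along $\partial_t Q^{i-1}$, so if a vertex of $\partial Q_\infty$ at distance $\le L$ on the left of $\rho$ (or on its right) is hit, the step swallows at least $\tfrac12 L$ edges of the top boundary and hence $\Delta X_i\le -\tfrac12 L$, triggering $\tjump$. This holds for every peeling case of Figure~\ref{fig_peeling_cases}, including the two-hole cases; the total decrease of $|\partial_t|$ in a two-hole step is still the full span from the peeled edge to the farthest identified vertex, so your concern that ``the walk constraint is weaker'' there is unfounded. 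The paper states this deterministic fact explicitly just after defining $\tau$.

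It is insufficient because, even granting $R^{\tau-1}\le\rho-L$, your claims $|\partial_\ell H|=\rho-R^{\tau-1}$ and ``$|\partial_r H|\ge L$ follows from $v-\rho\ge 3L$'' fail when the face $F^\tau$ itself is a two-hole step. If the quadrangle $F^\tau=abcd$ has \emph{both} $c$ and $d$ on the boundary of the infinite hole, with (say) $d$ at distance $\ge 3L$ on the right of $\rho$ but $c$ in the zone $(-L,2L)$ around $\rho$, then the hole containing the root is bounded by the edge $cd$, not by $\partial_t Q^{\tau-1}$; one gets $|\partial_\ell H|=|c|<L$ when $c\in(-L,0)$, or $|\partial_r H|=c-L<L$ when $c\in(L,2L)$, or the first $L$ right-edges are not even in $\partial H$ when $c\in(0,L)$. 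This is the only genuinely random obstruction, and it occurs at time $\tau$, not before. The paper rules it out directly: such a step has two holes of half-perimeters $\ell_1,\ell_2\ge L/2$, so by \eqref{eqn_bad_peeling_case} its probability is $O(L^{-3})$ per step, and a union bound over $\tau\le L^{3/2}$ steps gives $O(L^{-3/2})=o(1)$. Your more elaborate route via a window of ``$O(L)$ relevant steps'' is not needed (and the claim that $|\partial_t Q^{i-1}|=O(L)$ holds for only $O(L)$ steps is not correct: before $\tau$ we only know $|X_i|<L/2$, which says nothing about $|\partial_t Q^{i-1}|$ itself).
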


\begin{lemma}\label{lem_probability_successful}
There is a constant $\delta>0$ such that, for every $L \geq 1$,
\[ \P \l( \mbox{the exploration is successful} \r) \geq 2 \delta.\]
\end{lemma}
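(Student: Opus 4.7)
My plan combines a scaling-limit argument for the perimeter process $X$ with an analysis of the peeling configuration at the stopping time $\tau$.

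First, I would exploit the $3/2$-stable scaling limit of $X$ recalled in Section~\ref{subsec_UIHPQ}: under the rescaling $\hat X^L_t := L^{-1} X_{\lfloor L^{3/2} t \rfloor}$, the process $\hat X^L$ converges in distribution on $[0,1]$ (Skorokhod topology) to a spectrally negative $3/2$-stable Lévy process $(S_t)_{t \in [0,1]}$. Fix a large constant $K$ and let $E_S$ be the event that $S$ has exactly one jump of size $\leq -1/2$ on $[0,1]$, that this jump has size in $[-K,-1/2]$, and that $\sup_{t<T^*}|S_t|<1/2$, where $T^*$ is its time. Standard facts about Lévy processes (Poisson structure of the jumps of $S$, and positive probability that a spectrally negative stable process stays in a bounded strip until a fixed time) give $\P(E_S)>0$ for $K$ large enough. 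By Skorokhod convergence, the discrete analogue $E_L$, namely the event that there is a unique $i^*\leq L^{3/2}$ with $\Delta X_{i^*}\leq -L/2$, that $\Delta X_{i^*}\in [-KL,-L/2]$, and that $|X_j|<L/2$ for all $j<i^*$, satisfies $\P(E_L)\geq 2\delta_1 > 0$ uniformly in $L$ large. On $E_L$ one checks directly that $\tau=\tjump=i^*\leq \ttime$ with $\Delta X_\tau\in[-KL,-L/2]$, so the first requirement for the exploration to be successful is met.

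Second, I would control the geometry of the peeling step at time $\tau$. On $E_L$ and conditionally on $(Q^i)_{i<\tau}$, the perimeter fluctuations have stayed below $L/2$ in absolute value, so the boundary of the infinite hole of $Q^{\tau-1}$ has both its top part and its $\partial Q_\infty$-part of length $\Theta(L)$ near the peeled edge, and the segment of $\partial Q_\infty$ at distance in $[3L,4L]$ to the right of the root is still unexplored (it lies strictly beyond the right endpoint of $\partial_b Q^{\tau-1}$). By the spatial Markov property of $Q_\infty$, conditionally on $(Q^i)_{i<\tau}$ the peeled face $F^\tau$ is sampled according to the explicit peeling weights recalled in Section~\ref{subsec_UIHPQ}. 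For a prescribed jump $\Delta X_\tau=-k$ with $k\in[L/2,KL]$, the position of the apex of $F^\tau$ along the boundary of the infinite hole is distributed with weights proportional to $\QQ_{\ell_1}\QQ_{\ell_2}$ in the two-hole case (cf.~\eqref{eqn_bad_peeling_case}) and to $\QQ_{k+1}$ in the one-hole case. Summing these weights over the positions falling on $\partial Q_\infty$ at distance in $[3L,4L]$ to the right of the root, and comparing with the total weight of jumps of magnitude in $[L/2,KL]$, yields (via the asymptotics in \eqref{formula_quad_Zp}) a uniform lower bound $\delta_2>0$ on the conditional probability that the geometric condition holds.

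Combining the two steps gives $\P(\text{exploration successful})\geq 2\delta_1\delta_2=:2\delta$ for $L$ large; the finitely many remaining small values of $L$ are absorbed into $\delta$ since, for each such $L$, the event of interest is a cylinder event of positive probability.

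I expect the main obstacle to be the peeling-case analysis in the second step: one must precisely match the combinatorial weights (indexed by the hole perimeters $\ell_1,\ell_2$) with geometric positions along the boundary of the infinite hole of $Q^{\tau-1}$, and verify uniformly in $L$ and in the past exploration that the target segment of $\partial Q_\infty$ at distance in $[3L,4L]$ to the right of the root collects a positive fraction of the total weight of large negative jumps.
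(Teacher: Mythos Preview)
Your approach is broadly correct, but the paper's proof is considerably shorter because it bypasses your second step entirely via a symmetry observation.

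The paper does not fix $\Delta X_\tau \in [-KL,-L/2]$ and then analyse where the apex of $F^\tau$ lands. Instead it asks for the stronger event $\{\tau=\tjump,\ \Delta X_\tau\leq -10L\}$, whose probability is bounded below by exactly the scaling-limit argument you describe in your first step. On that event at least $10L$ boundary edges are swallowed at time $\tau$, and the distribution of a single peeling step is invariant under the reflection exchanging ``left of the peeled edge'' and ``right of the peeled edge''. Hence, conditionally on the size of the jump, with probability at least $\tfrac12$ at least $5L$ of the swallowed edges lie to the right of the peeled edge. Since the right endpoint of $\AA(Q^{\tau-1})$ is at distance $\tfrac{3}{2}L$ from the root along $\partial Q_\infty^{\tau-1}$, this forces the peeled face to touch $\partial Q_\infty$ at distance at least $5L-\tfrac{3}{2}L>3L$ to the right of the root, so the exploration is successful. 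The whole conditional-geometric analysis collapses to a one-line symmetry argument.

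Your route would work, but note two points. First, on your event $E_L$ a jump of size near $-L/2$ cannot produce success (the apex cannot reach distance $3L$ to the right), so the positive conditional probability in step~2 really comes from the sub-range of jump sizes around $[-\tfrac{11}{2}L,-\tfrac{9}{2}L]$; you should make this explicit rather than speaking of ``a prescribed jump $\Delta X_\tau=-k$'' for generic $k\in[L/2,KL]$. Second, the careful matching of hole perimeters $(\ell_1,\ell_2)$ to geometric landing positions that you flag as the main obstacle is precisely what the paper's symmetry trick avoids; there is no need to carry it out.
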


Lemma \ref{lem_successful_hole} is almost deterministic, whereas the proof of Lemma \ref{lem_probability_successful} relies on the convergence of the walk $X$ to a Lévy process.

\begin{proof}[Proof of Lemma \ref{lem_successful_hole}]
The only non-deterministic part of the proof is to show that with high probability, the large hole created in the last step of a successful exploration is not actually split into two large holes.

Let us work on the event where the exploration is successful. Let $a$, $b$, $c$, $d$ be the four vertices (in counterclockwise order) of the quadrangle $F^{\tau}$, where $a$ and $b$ are incident to the peeled edge $\AA(Q^{\tau-1})$. Let also $Q^{\tau-1}_\infty$ be the infinite quadrangulation filling the infinite hole of $Q^{\tau-1}$, rooted at the same edge as $Q_\infty$ (we know the root of $Q_\infty$ belongs to $\partial Q^{\tau-1}_\infty$ by the discussion right after the definition of $\tau$). The only thing to rule out is the possibility that both $c$ and $d$ lie on $\partial Q^{\tau-1}_\infty$, but one of them is far on the right and the other too close to the root. More precisely, assume that $d$ is at distance at least $3L$ on the right of the root, but $c$ is on $\partial Q^{\tau-1}_\infty$, either on the left of the root at distance at most $L$, or on its right at distance at most $2L$. 
If this occurs, the quadrangle $abcd$ swallows two finite holes of perimeter $\ell_1+1$ and $\ell_2+1$ (one from $b$ to $c$ and one from $c$ to $d$), where $\ell_1$ is the distance from $b$ to $c$ along the boundary, and $\ell_2$ the distance from $c$ to $d$ along the boundary. Moreover, we must have $\ell_1 \geq \frac{L}{2}$ and $\ell_2 \geq L$. Therefore by the estimate \eqref{eqn_bad_peeling_case}, the probability for this bad event to occur at some fixed time is bounded by
\[ \frac{1}{4} \sum_{\ell_1, \ell_2 \geq L/2} \l( \frac{2}{9} \r)^{\ell_1+\ell_2} \monoQQ_{\ell_1+1} \monoQQ_{\ell_2+1} \leq C \sum_{\ell_1, \ell_2 \geq L/2} \ell_1^{-5/2} \ell_2^{-5/2} \leq C' L^{-3}\]
for some absolute constants $C$ and $C'$. Since $\tau \leq L^{3/2}$ by definition, the probability for such a peeling step to occur before $\tau$ is $O(L^{-3/2}) = o(1)$.

Therefore, up to an event of probability $o(1)$ as $L\to\infty$, if the exploration is successful, then one of the vertices $c$ and $d$ is at distance at least $3L$ on the right of the root and the other one is:
\begin{itemize}
\item
either not on $\partial Q^{\tau-1}_\infty$,
\item
or at distance at least $L$ on the left of the root along $\partial Q^{\tau-1}_\infty$,
\item
or at distance at least $2L$ on its right.
\end{itemize}
Hence, the last exploration step creates a hole around the root vertex, that we denote by $H$. 
We now finish the proof by checking that $H$ satisfies Item \ref{prop_item_dimensions_hole} of Proposition \ref{Prop block construction}. 
The first explored vertex of $\partial Q^{\tau-1}_\infty$ on the right of $\rho$ (which is also the first explored vertex of $\partial Q_\infty$ on the right of $\rho$) is at distance at least $2L$ of $\rho$ along $\partial Q_\infty$, so we have $|\partial_r H| \geq 2L-L=L$. Moreover, by the definition of $\tau$, the segment of $\partial \uhpqs$ of length $L$ on the left of $\rho$ has not been touched, so it is part of the boundary of $H$, so $|\partial_{\ell} H| \geq L$. Finally, $\partial_t H$ consists of one or two edges of the quadrangles $abcd$, together with the segment of $\partial_t Q^{\tau-1}$ between the peeled edge $ab$ (excluded) and the rightmost point hit by the exploration before time $\tau-1$. We know by the choice of our peeling algorithm that $b$ is at distance exactly $\frac{3}{2}L$ from the root along $\partial Q^{\tau}_\infty$. On the other hand, by definition of $\tau$, the rightmost point hit by the exploration before time $\tau-1$ is at distance at least $L$ from the root along $\partial Q^{\tau-1}_\infty$. It follows that $|\partial_t H| \leq \frac{1}{2}L+2 \leq L$.
\end{proof}

\begin{proof}[Proof of Lemma \ref{lem_probability_successful}]
Assume that $\tau=\tjump$ and that $\Delta X_\tau \leq -10L$. Then the last peeling step swallows at least $10L$ vertices, on the left or on the right of the peeled edge. By symmetry, with probability $\frac{1}{2}$, at least $5L$ of these vertices are on the right, so we have
\[\P \l( \mbox{the exploration is successful} \r) \geq \frac{1}{2} \P \l( \tau=\tjump, \Delta X_i \leq -10L \r).\]
In particular, the event in the right-hand side only depends on the walk $X$. We would like to compare it with the same event for a Lévy process $S$. As recalled in Section \ref{subsec_UIHPQ}, we know from \cite[Section 6.2]{CLGpeeling} that
\begin{equation}\label{eqn_convergence_walk_levy}
\l( \frac{X_{\floor{L^{3/2}t}}}{L} \r)_{0 \leq t \leq 1} \xrightarrow[L \to +\infty]{} \l( S_t \r)_{0 \leq t \leq 1},
\end{equation}
where $S$ is a spectrally negative stable Lévy process with index $3/2$.

Let $\mathscr{A}$ be the set of càdlàg functions $f$ on $[0,1]$ such that the first jump of $f$ of magnitude larger than $\frac{1}{2}$ is a negative jump of magnitude at least $10$, and $f$ stays in $\l( -\frac{1}{2}, \frac{1}{2} \r)$ before that jump. For $f$ càdlàg and $t \in (0,1]$, we write $\Delta f(t)=f(t)-\lim_{s \to t^-} f(s)$. We have
\[ \partial \mathscr{A} \subset \{ \exists t, \Delta f(t)=-10 \} \cup \{ \exists t, \Delta f(t)=-\frac{1}{2} \} \cup \left\{ \sup_{[0, \tjump)} |f|=\frac{1}{2} \right\} \]
for the Skorokhod topology, so $\P \l( S \in \partial \mathscr{A} \r)=0$. By \eqref{eqn_convergence_walk_levy} and the portmanteau theorem, we obtain
\[ \P \l( \l( \frac{X_{\floor{L^{3/2}t}}}{L} \r)_{0 \leq t \leq 1} \in \mathscr{A} \r) \xrightarrow[L \to +\infty]{} \P \l( L \in \mathscr{A} \r) >0, \]
which is enough to conclude.
\end{proof}

We now move on to Item \ref{prop_item_not_too_quick} of Proposition \ref{Prop block construction}. As for Item \ref{prop_item_dimensions_hole}, the idea is to control the quantities we are interested in (here $|\partial_b Q^{\tau}|$ and $|\partial_t Q^{\tau}|$) in terms of the random walk $X$. The useful point is that, with the peeling algorithm $\AA$ that we chose, the bottom length $|\partial_b Q^i|$ is closely related to the running minimum of the walk $X$.

\begin{lemma}\label{lem_boundary_and_infimum}
We have
\begin{equation*}
|\partial_b Q^{\tau}| \leq \frac{3}{2}L+2-2\min_{[0, \tau]} X.
\end{equation*}
\end{lemma}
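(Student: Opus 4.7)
The plan is to establish the inequality by induction on the peeling step, showing that
\[ |\partial_b Q^i| \leq \frac{3}{2}L + 2 - 2 \min_{0 \leq j \leq i} X_j \]
for every $0 \leq i \leq \tau$, which at $i = \tau$ is the statement of the lemma. The base case is a direct inspection of the initial configuration of the peeling exploration, where $Q^0$ consists of the segment of length $\frac{3}{2}L$ on $\partial Q_\infty$ needed to make the peeling algorithm well-defined; a short calculation then gives the initial values of $|\partial_b Q^0|$ and $X_0$ for which the inequality holds with some slack absorbed into the constant $+2$.

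For the inductive step, I would perform a case analysis on the peeling step at time $i$ according to Figure \ref{fig_peeling_cases}. The key dichotomy is whether the step extends $|\partial_b Q^i|$ or not. If it does not (either the peeled face creates no identification, or the identification is with the interior of the previously explored top boundary), then $|\partial_b Q^i|$ is unchanged and the induction hypothesis extends to step $i$ since $\min X$ is non-increasing. If it does extend the bottom --- that is, the peeled face identifies a vertex with a previously unexplored vertex of $\partial Q_\infty$, extending $|\partial_b|$ by some $k \geq 1$ --- then a careful boundary accounting of the new finite hole, whose boundary consists of the swallowed $\partial Q_\infty$ segment of length $k$, the swallowed portion of the old top boundary, plus one or two edges of the new quadrangle, yields a bound of the form $\Delta X_i \leq -2k + O(1)$, causing $\min X$ to drop by at least $k$, which preserves the inductive inequality.

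The main obstacle is the boundary accounting in the swallowing case, which must be carried out for each geometric sub-case of Figure \ref{fig_peeling_cases}. The specific choice of the peeling algorithm (peeling at distance $\frac{3}{2}L$ on the top boundary) and the definition of $\tau$ play a crucial role: because $\tau$ is triggered as soon as $|\Delta X_i| \geq \frac{L}{2}$, the length of the old top boundary segment swallowed in any extension of $\partial_b$ during the interval $[0,\tau-1]$ would be too large, so in fact such extensions are ruled out before time $\tau$. Thus $|\partial_b Q^i|$ stays constant until $i = \tau$, and the entire induction reduces to a single boundary computation at the final step, where the factor $2$ in the bound arises from $X = |\partial_t| - |\partial_b|$ simultaneously losing top edges and gaining bottom edges.
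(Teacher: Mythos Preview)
Your proposal contains a genuine error. The claim that ``such extensions are ruled out before time $\tau$'' and hence ``$|\partial_b Q^i|$ stays constant until $i=\tau$'' is false. Consider a step $i$ with $1<i<\tau$ at which the peeled face identifies with a vertex of $\partial Q_\infty$ lying only one or two edges to the left of the current leftmost point $x_\ell^{(i-1)}$. The finite hole created has boundary made up of those one or two edges of $\partial Q_\infty$, the segment of $\partial_t Q^{i-1}$ between $x_\ell^{(i-1)}$ and the peeled edge (of length $|\partial_t Q^{i-1}|-\tfrac32 L$), and at most two edges of the new quadrangle. Since $|X_{i-1}|<\tfrac L2$ and $|\partial_b Q^{i-1}|$ may be close to $\tfrac32 L$, the quantity $|\partial_t Q^{i-1}|-\tfrac32 L=X_{i-1}+|\partial_b Q^{i-1}|-\tfrac32 L$ can be well below $\tfrac L2$, so $|\Delta X_i|$ need not reach $\tfrac L2$ and the step does not trigger $\tau$. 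In fact, when $|\partial_t Q^{i-1}|<\tfrac32 L$ the peeled edge itself lies on $\partial Q_\infty$ to the left of $x_\ell^{(i-1)}$, so a leftward extension is forced. Thus $x_\ell$ can move left repeatedly before $\tau$.

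Relatedly, the bound you sketch for the swallowing case, $\Delta X_i\le -2k+O(1)$, does not hold: after an extension of $\partial_b$ to the left by $k$, the new top boundary has length $\tfrac32 L+O(1)$ (it is ``reset''), so $\Delta X_i=(\tfrac32 L+O(1))-|\partial_t Q^{i-1}|-k$, and nothing forces $|\partial_t Q^{i-1}|-\tfrac32 L\ge k$. What \emph{does} follow after such a step is the direct (non-inductive) bound $|\partial_b Q^i|=|\partial_t Q^i|-X_i\le \tfrac32 L+2-X_i\le \tfrac32 L+2-\min_{[0,i]}X$, i.e.\ the inequality with factor~$1$. The paper proceeds exactly this way: it locates the time $j$ at which the final $x_\ell$ is hit and uses $d_j(\rho,x_\ell)\le \tfrac32 L+2$ to get the factor-$1$ bound on $d_0(\rho,x_\ell)$. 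The factor~$2$ only enters at time $\tau$, when $x_r$ may move to the right, and obtaining it requires a separate boundary computation that crucially uses the stopping condition $X_{\tau-1}\le \tfrac L2$ (see the remark following the proof). This step, which is the actual difficulty of the lemma, is missing from your outline.
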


Note that, once this is proved, we can easily bound $|\partial_t Q^{\tau}|$. Indeed, we have $X_{\tau}=|\partial_t Q^{\tau}| - |\partial_b Q^{\tau}|$ by definition of $X$, so
\begin{equation*}
|\partial_t Q^{\tau}| \leq X_{\tau}+\frac{3}{2} L+2-2 \min_{[0,\tau]} X \leq 2L+4-2\min_{[0,\tau]} X,
\end{equation*}
since $X_{\tau} \leq X_{\tau-1}+2 \leq \frac{L}{2}+2$. The proof of Lemma \ref{lem_boundary_and_infimum} relies on our choice of the peeling algorithm and of the stopping time $\tau$. It is completely deterministic.

\begin{proof}[Proof of Lemma \ref{lem_boundary_and_infimum}]
We recall that $\partial_b Q^{\tau}$ is the largest segment of $\partial Q_{\infty}$ containing the root vertex $\rho$ and all the vertices incident to a face of $Q^{\tau}$. Let $x_{\ell}$ and $x_r$ be respectively the leftmost and the rightmost vertices of $\partial_b Q^{\tau}$. Note that $x_r$ is either $\rho$, or is hit for the first time at time $\tau$. For $0 \leq i \leq \tau$, we denote by $Q_{\infty}^i$ the infinite hole of $Q^i$. If $x$ and $y$ are two vertices of $\partial Q^i_{\infty}$, it will be convenient to denote by $d_i(x,y)$ the distance between $x$ and $y$ along $\partial Q^i_{\infty}$, i.e. the number of edges of $\partial Q^i_{\infty}$ lying between $x$ and $y$.

We first treat the case where $x_{\ell}$ and $x_r$ are discovered (i.e. hit for the first time by the exploration) at the same time\footnote{It could be shown that this case is very unlikely, but it is simpler to treat it deterministically like the other case.}. This time must be $\tau$. If this occurs, then $x_{\ell}$ and $x_r$ are both incident to the face that is explored at time $\tau$, so $|\partial_t Q^{\tau}| \leq 2$ and
\[ |\partial_b Q^{\tau}|=|\partial_t Q^{\tau}|-X_{\tau} \leq 2-\min_{[0,\tau]} X ,\]
so the conclusion of Lemma \ref{lem_boundary_and_infimum} holds.

We now assume that $x_{\ell}$ and $x_r$ are discovered at different times. We first estimate the position of $x_{\ell}$. Let $0 \leq j \leq \tau$ be the time at which $x_{\ell}$ is hit for the first time. 
We recall that $F^j \in Q^j \backslash Q^{j-1}$ is the quadrangular face that is explored at time $j$. Among all the vertices incident to $F^j$ that belong to $\partial Q^{j-1}_{\infty}$, let $a$ be the rightmost one (see the left part of Figure \ref{fig_proof_bounding_bottom_boundary}). Then $a$ lies on the right of the edge that is peeled at time $j$ (by definition of $a$) and on the left of $\rho$ (if not we would have $j=\tau$, and $x_{\ell}$ and $x_r$ would be hit first at the same time), so $d_j(\rho, a) \leq \frac{3}{2}L$. Therefore, we have
\begin{align*}
d_0(\rho,x_{\ell}) &= d_j(\rho, x_{\ell})-X_j \\
& \leq d_j(\rho,a)+d_j(a,x_{\ell})-X_j \\
& \leq \frac{3}{2}L+2-X_j \\
& \leq \frac{3}{2}L+2-\min_{[0,\tau]} X.
\end{align*}

\begin{figure}
\begin{center}
\includegraphics[width=\textwidth]{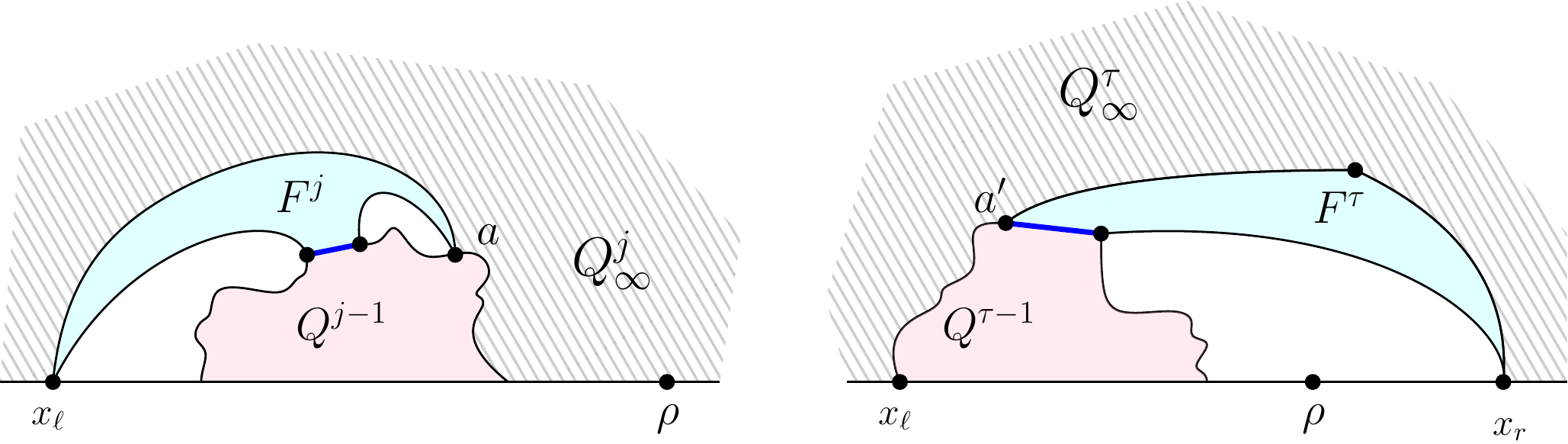}
\end{center}
\caption{Proof of Lemma \ref{lem_boundary_and_infimum}. In this drawing we assume that $x_\ell$ and $x_r$ are first discovered at different times. On the left, the step $j$ at which the leftmost vertex $x_{\ell}$ is discovered. On the right, the last step $\tau$, where $x_r$ is discovered.}\label{fig_proof_bounding_bottom_boundary}
\end{figure}

We now study $x_r$. As noted above, either $x_r=\rho$ or $x_r$ is hit exactly at time $\tau$. In the first case, we have \[|\partial_b Q^{\tau}|=d_0(\rho,x_{\ell}) \leq \frac{3}{2}L+2-\min_{[0,\tau]} X,\]
and the lemma holds. We now focus on the second case. Note that in this case, the vertex $x_{\ell}$ has been hit for the first time strictly before time $\tau$. Recall that $F^{\tau}$ is the quadrangular face that is explored at time $\tau$. Among all the vertices of $F^{\tau}$ that belong to $\partial Q^{\tau-1}_{\infty}$, let $a'$ be the leftmost one (see the right part of Figure \ref{fig_proof_bounding_bottom_boundary}). By definition of $x_{\ell}$, the vertex $a'$ lies on the right of $x_{\ell}$, and the segment of the top boundary between $x_{\ell}$ and $a'$ is not changed between times $\tau-1$ and $\tau$. Hence, we can write
\begin{align*}
|\partial_b Q^{\tau}| &= |\partial_t Q^{\tau}|-X_{\tau}\\
&=d_{\tau}(x_{\ell},a')+d_{\tau}(a',x_r)-X_{\tau}\\
&\leq d_{\tau-1}(x_{\ell},a')+2-X_{\tau}.
\end{align*}
Moreover, by definition, the vertex $a'$ lies on the left of the peeled edge on $\partial_t Q^{\tau-1}$, so we have $d_{\tau-1}(x_{\ell},a') \leq |\partial_t Q^{\tau-1}|-\frac{3}{2} L$. Hence, we can write
\begin{align*}
|\partial_b Q^{\tau}| &\leq |\partial_t Q^{\tau-1}|-\frac{3}{2}L+2-X_{\tau}\\
&= |\partial_b Q^{\tau-1}|+X_{\tau-1}-\frac{3}{2}L+2-X_{\tau}.
\end{align*}
By the definition of the stopping time $\tau$, we have $X_{\tau-1} \leq \frac{1}{2}L$. Moreover, at time $\tau-1$, no vertex on the right of $\rho$ has been hit, so $|\partial_b Q^{\tau-1}|=d_0(\rho,x_{\ell})$. By our estimate on the position of $x_{\ell}$, we finally obtain
\[|\partial_b Q^{\tau}| \leq \l( \frac{3}{2}L+2-\min_{[0,\tau]} X \r) +\frac{1}{2}L-\frac{3}{2}L+2-X_{\tau} \leq \frac{1}{2}L+4-2\min_{[0,\tau]} X,\]
which concludes the proof of the Lemma.
\end{proof}

\begin{rek}
Our computations to prove Lemma \ref{lem_boundary_and_infimum} may seem a bit convoluted, especially in the last part of the proof. The reason why the end of the proof is not obvious is that it is necessary to use the fact that $X_{\tau-1} \leq L/2$. Indeed, if it was not true, we might imagine a case where $|\partial_t Q^{\tau-1}|$ is much larger than $L$, and where $\Delta X_{\tau} \approx - |\partial_t Q^{\tau-1}|$. If this was the case, then $X_{\tau}$ would not be very large compared to $L$, whereas $|\partial_b Q^{\tau}|$ and $|\partial_t Q^{\tau}|$ would both be very large.
\end{rek}

We can now finish the proof of Proposition \ref{Prop block construction} (the only part left is to check Item \ref{prop_item_not_too_quick}). By Lemma \ref{lem_boundary_and_infimum}, it is enough to prove that $\E \left[ \frac{3}{2}-2\frac{\min_{[0, \tau]} X}{L} \right]$ is bounded when $L \to +\infty$. For this, we need to bound $\P \l( \min_{[0, \tau]} X \leq -aL \r)$ uniformly in $L$. Note that by definition of $\tau$, we have $X_i \geq -\frac{1}{2}L$ for every $i<\tau$. Moreover, if $\tau \ne \tjump$, then
\[ X_{\tau}=X_{\tau-1}+\Delta X_{\tau} \geq -\frac{1}{2}L-\frac{1}{2}L=-L,\]
so $\min_{[0,\tau]} X \geq -L$. Therefore, if we choose $a>1$, we have
\begin{align*}
\P \l( \min_{[0, \tau]} X \leq -aL \r) &= \P \l( \tau = \tjump, X_{\tau} \leq -aL \r)\\
& \leq \P \l( \Delta X_{\tau} \leq -(a-1) L \r)\\
& \leq \P \l( \exists i \in [1, L^{3/2}], \Delta X_i \leq -(a-1)L \r)\\
& \leq L^{3/2} \P \l( X_1 \leq -(a-1)L \r)\\
& \leq \frac{c L^{3/2}}{\l( (a-1)L \r)^{3/2}} \\
&= \frac{c}{(a-1)^{3/2}},
\end{align*}
where $c$ is an absolute constant, and we used the fact that $X_1$ lies in the domain of attraction of a $3/2$-stable law (see \cite[Section 6.2]{CLGpeeling}). From here, we obtain
\begin{align*}
\E \left[ -\frac{\min_{[0, \tau]} X}{L} \right] &= \int_0^{+\infty} \P \l( \min_{[0, \tau]} X \leq -a L \r)\,\mathrm{d}a\\
& \leq 1+\int_1^{+\infty} \max \l( 1, \frac{c}{(a-1)^{3/2}} \r) \, \mathrm{d}a.
\end{align*}
Since this integral converges, we are done.

\section{Proof of Theorem \ref{main_thm}}
\label{Sec_proof_main_theorem}

We now combine Lemma \ref{lem_duality_random} with Proposition \ref{Prop block construction} to prove our main theorem. We denote by $\uhpmg$ the image of $Q_{\infty}$ by the \Tutte, and recall that $M_{\infty}$ is obtained from $\uhpmg$ by removing the finite beads along its boundary.

\begin{lemma}\label{almost_main_to_main}
To prove Theorem \ref{main_thm} for $\uhpms$, it is enough to prove it for $\uhpmg$.
\end{lemma}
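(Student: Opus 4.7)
The plan is to compare the two maps via Rayleigh monotonicity together with the fact that pendant subgraphs attached at a single vertex do not affect effective resistance. The key structural observation is that $\uhpms$ is the infinite bead of $\uhpmg$, and the finite beads are pendant subgraphs of $\uhpmg$: each is attached to the rest of the map through a single pinchpoint vertex (possibly via a chain of further finite beads, all of which are themselves pendant).

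I would first check that distances are preserved: $d_{\uhpms}(\rho,v) = d_{\uhpmg}(\rho,v)$ for every $v \in \uhpms$. Indeed, any $\uhpmg$-geodesic may be shortened (or at worst left unchanged) to lie entirely in $\uhpms$ by erasing each excursion into a finite bead, since such an excursion leaves and re-enters at the same pinchpoint. This yields $B_r(\uhpms) = B_r(\uhpmg) \cap \uhpms$, which a brief case analysis of the (un)bounded components of the complement upgrades to
\[ \Hull_r(\uhpms) = \Hull_r(\uhpmg) \cap \uhpms. \]
The two containments come from: a bounded component of $\uhpms \setminus B_r(\uhpms)$ stays bounded in $\uhpmg$ after adjoining the finite pendant beads attached to it; conversely, a vertex of $\uhpms$ in the unbounded component of $\uhpms \setminus B_r(\uhpms)$ remains in the unbounded component of $\uhpmg \setminus B_r(\uhpmg)$, since its escape path in $\uhpms$ is also a path in $\uhpmg$. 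In particular $\uhpms \setminus \Hull_r(\uhpms) \subseteq \uhpmg \setminus \Hull_r(\uhpmg)$.

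To conclude, I combine the pendant bead argument with Rayleigh monotonicity. Since $\rho \in \uhpms$ and the finite beads of $\uhpmg$ are pendant off $\uhpms$, any unit current flow in $\uhpmg$ from $\rho$ to a target set $A \subseteq \uhpms$ must carry zero current on every edge of a finite bead, so
\[ \Reff^{\uhpmg}(\rho \leftrightarrow A) = \Reff^{\uhpms}(\rho \leftrightarrow A) \qquad \text{for every } A \subseteq \uhpms \text{ with } \rho \notin A. \]
Specialising to $A = \uhpms \setminus \Hull_r(\uhpms)$ and using the inclusion above together with Rayleigh monotonicity (enlarging the sink only decreases the resistance) gives
\[ \Reff^{\uhpmg}\bigl(\rho \leftrightarrow \uhpmg \setminus \Hull_r(\uhpmg)\bigr) \leq \Reff^{\uhpmg}\bigl(\rho \leftrightarrow \uhpms \setminus \Hull_r(\uhpms)\bigr) = \Reff^{\uhpms}\bigl(\rho \leftrightarrow \uhpms \setminus \Hull_r(\uhpms)\bigr), \]
so any almost sure lower bound of order $\log r$ for the left-hand side immediately yields the same lower bound for $\uhpms$. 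The main (still mild) obstacle is the set-theoretic identity $\Hull_r(\uhpms) = \Hull_r(\uhpmg) \cap \uhpms$, which requires a bit of careful bookkeeping on how the finite beads sit relative to the hulls; everything else is a soft application of two standard facts about electrical networks.
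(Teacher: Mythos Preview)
Your argument overlooks one point: the root vertex of $\uhpmg$ and the root vertex of $\uhpms$ are \emph{not} the same. By the rooting convention for beads, the root $\rho$ of $\uhpms$ is the pinchpoint of the infinite bead closest to the root $\rho^g$ of $\uhpmg=\Ti{Q_\infty}$, and $\rho^g$ typically lies in a finite bead (cf.\ Figure~\ref{Fig image by Tutte}). Consequently the assertion ``$\rho\in\uhpms$'' is false in general, the identity $\Hull_r(\uhpms)=\Hull_r(\uhpmg)\cap\uhpms$ fails because the two hulls are centred at different vertices, and the left-hand side of your final display is not the quantity controlled by ``Theorem~\ref{main_thm} for $\uhpmg$''.

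The repair is easy and is exactly what the paper does. Since $\rho$ is a cut vertex separating $\rho^g$ from $\uhpms$, every path from $\rho^g$ to a vertex of $\uhpms$ passes through $\rho$; hence, with $D=d_{\uhpmg}(\rho^g,\rho)$, one has $B_r(\uhpmg)\cap\uhpms=B_{r-D}(\uhpms)$ for $r>D$, and likewise for the hulls, while the series law gives
\[
\Reff^{\uhpmg}\!\bigl(\rho^g\leftrightarrow A\bigr)=\Reff^{\uhpmg}\!\bigl(\rho^g\leftrightarrow\rho\bigr)+\Reff^{\uhpmg}\!\bigl(\rho\leftrightarrow A\bigr)
\]
for any target $A$ on the far side of $\rho$. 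Your pendant-bead/Rayleigh argument then applies verbatim to the second term, and the first term is a finite random constant that does not affect the $c\log r$ lower bound. Apart from this root-shift issue your approach is correct and essentially coincides with the paper's proof.
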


\begin{proof}
Assume the theorem is proved for $\uhpmg$. We may see $\uhpms$ as a subgraph of $\uhpmg$. Denote the root of $\uhpmg$ by $\rho^g$ and the root of $\uhpms$ by $\rho$. Then $\rho$ is also the last pinchpoint of $\uhpms$ separating $\rho^g$ from infinity. Let $D$ be the graph distance from $\rho^g$ to $\uhpms$. Then for $r$ large enough, the ball of radius $r$ in $\uhpms$ is equal to the ball of radius $r+D$ in $\uhpmg$, minus the part between $\rho^g$ and $\rho$ and possibly parts of finite beads of $M_\infty^g$. This implies that the same is true for the hulls $B_r^{\bullet}$. Hence we have
\begin{align*}
R_{\eff}^{\uhpms} \l( \rho \leftrightarrow \partial B_r^{\bullet}(\uhpms) \r) &= R_{\eff}^{\uhpmg} \l( \rho^g \leftrightarrow \partial B_r^{\bullet}(\uhpms) \r) - R_{\eff}^{\uhpmg} \l( \rho^g \leftrightarrow \rho \r) \\
&\geq R_{\eff}^{\uhpmg} \l( \rho^g \leftrightarrow \partial B_{r+D}^{\bullet}(\uhpmg) \r) - R_{\eff}^{\uhpmg} \l( \rho^g \leftrightarrow \rho \r) \\
&\geq c \log r -c'
\end{align*}
a.s. for every large enough $r$, where $c$ is given by Theorem \ref{main_thm} for $\uhpmg$, and $c'$ is random but does not depend on $r$. This proves Theorem \ref{main_thm} for $\uhpms$.
\end{proof}

\begin{proposition}\label{almost_main_thm}
Theorem \ref{main_thm} holds if $\uhpms$ is replaced by $\uhpmg$.
\end{proposition}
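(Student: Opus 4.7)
The plan is to iterate Proposition \ref{Prop block construction} at geometrically growing scales $L_k = C_0^k$ (for a large fixed $C_0 > 1$), use the spatial Markov property of the \UIHPQ to produce conditionally independent blocks around $\rho$, and combine them via a series-law argument into a nested annular decomposition of $\uhpmg$ around $\rho^g$.

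I first construct the blocks iteratively. Apply Prop \ref{Prop block construction} at scale $L_1$ to obtain $Q^{(1)} \subset \uhpqs$ with hole $H^{(1)}$ containing $\rho$. By Item \ref{prop_item_markov}, the infinite complement of $Q^{(1)}$ is a fresh copy of $\uhpqs$, which I re-root on $\partial_t Q^{(1)}$ so that a further peeling wraps around $Q^{(1)}$. Iterating at scale $L_{k+1} = C_0 L_k$ and using Item \ref{prop_item_not_too_quick} (which gives $\E[|\partial_t Q^{(k)}|] \leq C L_k$), I ensure that, with probability bounded away from $0$ at each step, $|\partial_t Q^{(k)}| \leq \frac{1}{2} L_{k+1}$, so that the next peeling (starting at distance $\frac{3}{2}L_{k+1}$ to the left of its new root) actually sweeps around the previous block. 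This yields a nested sequence of edge-disjoint sub-maps $Q^{(k)}$ in $\uhpqs$, all enclosing $\rho$.

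Next I extract resistance from each iteration. On the event of Item \ref{prop_item_dimensions_hole} (conditional probability at least $\delta$), the dimensions of $H^{(k)}$ fit the hypotheses of Lemma \ref{lem_duality_random}, namely $\max(p_1, p_3) \leq L_k/2 \leq \min(p_2, p_4)$. By Item \ref{prop_item_markov} the filling of $H^{(k)}$ is a critical Boltzmann quadrangulation with simple boundary, so Lemma \ref{lem_duality_random} applies: with further conditional probability at least $1/2$, the effective resistance in the image of this filling under the \Tutte, between the top-white and bottom-white vertex sets, is at least $1$. Let $\mathcal{R}_k$ denote the intersection of these events; then $\P(\mathcal{R}_k \mid \text{past}) \geq \delta/2$, and the $\mathcal{R}_k$ are conditionally independent by the spatial Markov property.

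The resulting annuli $A_k \subset \uhpmg$ are pairwise edge-disjoint and each separates $\rho^g$ from infinity, so a standard unit-flow decomposition yields
\[
\Reff^{\uhpmg}\l( \rho^g \leftrightarrow \uhpmg \setminus \Hull_r(\uhpmg) \r) \geq \sum_{k \leq n(r)} R_k \geq \#\{k \leq n(r) : \mathcal{R}_k\},
\]
where $R_k$ is the resistance across $A_k$ in isolation and $n(r)$ is the largest index with $A_{n(r)} \subset \Hull_r(\uhpmg)$. A conditional Borel--Cantelli argument gives $\#\{k \leq n : \mathcal{R}_k\} \geq \delta n/4$ almost surely for large $n$, while standard diameter estimates for critical Boltzmann quadrangulations (perimeter $L_k$ gives diameter $O(L_k^{1/2+o(1)})$ with high probability) yield $n(r) \geq c \log r$ almost surely, concluding the lower bound. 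The main obstacles I anticipate are topological: carefully re-rooting each successive \UIHPQ so the peelings nest around $\rho$ (Item \ref{prop_item_not_too_quick} is tailored for this), and verifying that the annuli genuinely separate $\rho^g$ in $\uhpmg$ --- extra edges arising from faces of $Q^{(k)}$ straddling $\partial H^{(k)}$ could in principle short-circuit things, but the bounds $|\partial_\ell H^{(k)}|, |\partial_r H^{(k)}| \geq L_k$ from Item \ref{prop_item_dimensions_hole} prevent any such edge from shorting a single annulus.
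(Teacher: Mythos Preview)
Your overall strategy matches the paper's: iterate Proposition~\ref{Prop block construction}, use the spatial Markov property together with Lemma~\ref{lem_duality_random} to extract unit resistance across each good block, and conclude by a conditional law-of-large-numbers argument. Two steps, however, are not justified as written.

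\textbf{Scaling and nesting.} You fix deterministic scales $L_k=C_0^k$ and note that $|\partial_t Q^{(k)}|\le \tfrac12 L_{k+1}$ with probability bounded away from $0$, but you never say what happens on the complement. Item~\ref{prop_item_not_too_quick} only controls first moments, and the peeling increments lie in the domain of attraction of a $3/2$-stable law, so $|\partial_t Q^{(k)}|$ will occasionally overshoot $L_{k+1}$. When that happens, applying Proposition~\ref{Prop block construction} at scale $L_{k+1}$ yields a hole whose bottom side does not cover the separating segment $\Gamma[k]$; the resulting ``annulus'' then need not separate $\rho$ from infinity, and neither will subsequent ones until the deterministic scales catch up with the (random, growing) $|\Gamma[\cdot]|$. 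You do not control this. The paper sidesteps the issue entirely by choosing the scale \emph{adaptively}: $L[n]$ is defined as $|\Gamma[n]|$ itself, so the next block is always built at precisely the scale needed to wrap around everything constructed so far; Item~\ref{prop_item_not_too_quick} is then used only a posteriori to show $L[n]$ grows at most exponentially.

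\textbf{The distance bound is in the wrong metric.} You justify $n(r)\ge c\log r$ via ``standard diameter estimates for critical Boltzmann quadrangulations'', but $\Hull_r(\uhpmg)$ is taken for the graph metric of $\uhpmg$, not of $\uhpqs$. The available comparison is $d_{\uhpmg}(u,v)\ge \tfrac12 d_{\uhpqs}(u,v)$, which goes the wrong way: a small quadrangulation diameter gives no upper bound on distances in $\uhpmg$, because going around a high-degree black vertex can be arbitrarily costly after the Tutte mapping. (Also, the peeled blocks $Q^{(k)}$ themselves are not Boltzmann quadrangulations, only their hole fillings are.) The paper addresses exactly this point with Lemma~\ref{lem_degree_boundary}, proved by a short peeling argument: the $\uhpmg$-distance between two consecutive white boundary vertices of any $Q_\infty[n]$ has exponential tail. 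One then follows $\partial Q_\infty[n]$ edge by edge, combining this with the exponential bound on $L[n]$, to get $d_{\uhpmg}(\rho,\Gamma[n])\le A^n$ almost surely for large $n$. This ingredient is missing from your sketch and cannot be replaced by quadrangulation diameter estimates alone.
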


Proposition \ref{almost_main_thm}, together with Lemma \ref{almost_main_to_main}, gives Theorem \ref{main_thm}. 
The rest of this section is devoted to the proof of Proposition \ref{almost_main_thm}. Proposition \ref{Prop block construction} can be seen as the construction (with positive probability) of a block separating a segment of length $L$ on the boundary from infinity. The idea of the proof is to iterate Proposition \ref{Prop block construction}: the top boundary of the $n$-th block is the segment that the $(n+1)$-th block tries to separate from infinity. More precisely, we define by induction a sequence $(Q[n])_{n \geq 1}$ of finite sub-maps of $\uhpqs$, a sequence $(Q_{\infty}[n])_{n \geq 0}$ of infinite quadrangulations of the half-plane, and a sequence $(L[n])_{n \geq 0}$ of integers as follows (see also Figure \ref{fig_onion}):
\begin{itemize}
\item
$L[0]=2$ and $Q_{\infty}[0]=\uhpqs$;
\item
for every $n \geq 1$, the sub-map $Q[n]$ of $Q_{\infty}[n-1]$ is given by Proposition \ref{Prop block construction}, with $L=L[n-1]$;
\item
for every $n \geq 1$, the map $Q_{\infty}[n]$ is the infinite connected component of $Q_{\infty}[n-1] \backslash Q[n]$, rooted at the leftmost vertex $\rho[n]$ of $\partial_t Q[n]$. 
Moreover, we denote by $\Gamma[n]$ the smallest segment of $\partial Q_{\infty}[n]$ that separates all the $Q[i]$ for $1 \leq i \leq n$ from infinity in $Q_\infty$, and $L[n]$ is the length of $\Gamma[n]$, increased to the next even integer.
\end{itemize}
Finally, we also denote by $\FF[n]$ the $\sigma$-algebra generated by $Q_{\infty} \backslash Q_{\infty}[n]$, i.e. the first $n$ blocks together with the fillings of all the finite holes they form.

\begin{figure}
\begin{center}
\includegraphics[width=0.8\textwidth]{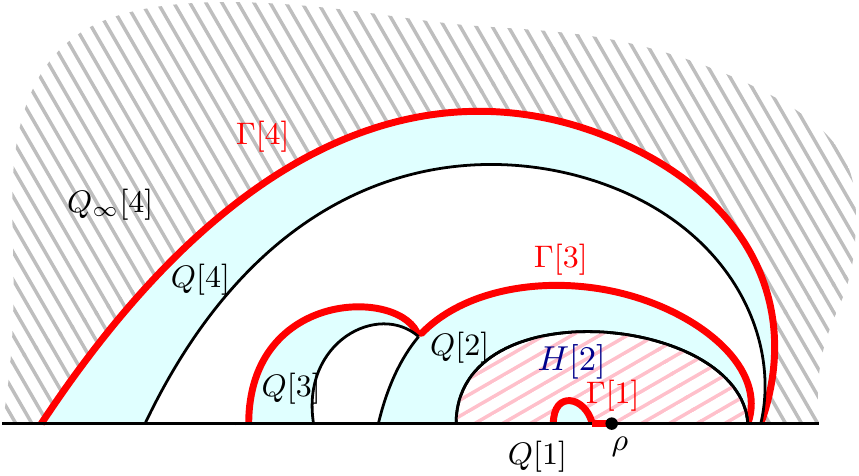}
\end{center}
\caption{The different blocks $Q[n]$ (in light blue). On this example, the only good block is $Q[2]$: the blocks $Q[1]$ and $Q[3]$ do not separate the root from infinity. The block $Q[4]$ does, but its hole has a too small right boundary compared to $L[3]$. The hole $H[2]$ is hatched in pink, and $Q_{\infty}[4]$ in grey.}\label{fig_onion}
\end{figure}

By the first point of Proposition \ref{Prop block construction}, for every $n \geq 1$, the map $Q_{\infty}[n]$ is independent of $\FF[n]$, and has the same distribution as $\uhpqs$. This guarantees that we can apply Proposition \ref{Prop block construction} to $Q_{\infty}[n]$ to obtain $Q[n+1]$. The proof of Proposition \ref{almost_main_thm} can be split into Lemmas \ref{lem_reff_root_an} and \ref{lem_distance_root_an} below. Note that if we are only interested in the recurrence of $\uhpmg$ or $\uhpms$ and not on quantitative resistance bounds, then Lemma \ref{lem_reff_root_an} only is sufficient.

\begin{lemma}\label{lem_reff_root_an}
There is a constant $c>0$ such that almost surely, for every $n$ large enough, we have
\[ \Reff^{\uhpmg} \l( \rho \leftrightarrow \Gamma[n] \r) \geq c n. \]
\end{lemma}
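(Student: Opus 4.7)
The plan is to isolate, among the first $n$ blocks $Q[1],\ldots,Q[n]$, a linear number of ``good'' blocks, and to argue that each such block contributes at least $1$ to $\Reff^{\uhpmg}(\rho \leftrightarrow \Gamma[n])$ via a series composition.

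\emph{Good blocks and law of large numbers.} Call $Q[k]$ \emph{good} if both (i) the dimension conclusion of item~(2) of Proposition~\ref{Prop block construction} is realized for $H[k]$, and (ii) in the Tutte image of the critical Boltzmann quadrangulation filling $H[k]$, the effective resistance between the white vertices on $\partial_t H[k]$ and those on $\partial_b H[k]$ is at least $1$. Item~(2) of Proposition~\ref{Prop block construction} gives (i) with probability at least $\delta$; conditionally on (i), the dimensions satisfy $\max(|\partial_t H[k]|,|\partial_b H[k]|) \le L[k-1] \le \min(|\partial_\ell H[k]|,|\partial_r H[k]|)$, which translates (modulo a harmless parity shift) into the hypothesis $\max(p_1,p_3)\le \min(p_2,p_4)$ of Lemma~\ref{lem_duality_random}, so (ii) holds conditionally with probability at least $1/2$. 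By the spatial Markov property (item~(1) of Proposition~\ref{Prop block construction}), $Q_\infty[k-1]$ and the filling of $H[k]$ are independent of $\FF[k-1]$, so the events $\{\text{block }k\text{ is good}\}$ are i.i.d.\ Bernoulli with success probability $\ge \delta/2$. The strong law of large numbers then ensures that almost surely, for every $n$ large enough, at least $\alpha n$ of the first $n$ blocks are good, with $\alpha:=\delta/4$.

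\emph{Series argument.} Let $k_1<\cdots<k_m$ be the good indices, so $m\ge\alpha n$. For each $i$, the set $T_i$ of white vertices on $\partial_t H[k_i]$ is a vertex cut in $\uhpmg$: every edge of $\uhpmg$ is a diagonal of a single face of $Q_\infty$, which lies entirely on one side of the Jordan curve $\partial_t H[k_i]$, so any path in $\uhpmg$ crossing this curve must use a vertex of $T_i$. The nested geometry of the peeling construction places the holes $H[k_i]$ in pairwise disjoint layers, hence the cuts $T_1,\ldots,T_m$ are disjoint and occur in a definite order between $\rho$ and $\Gamma[n]$. Contracting each $T_i$ to a single vertex $t_i$ does not increase any effective resistance and the resulting network decomposes as a series chain; the contribution of the $i$-th piece is lower-bounded by the resistance in the Tutte image of $H[k_i]$ between $\partial_b H[k_i]$ and $T_i$, which is $R_{k_i}\ge 1$ by goodness. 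Summing yields
\[
\Reff^{\uhpmg}(\rho \leftrightarrow \Gamma[n]) \ \ge\ \sum_{i=1}^{m} R_{k_i} \ \ge\ m \ \ge\ \alpha n
\]
a.s.\ for $n$ large enough, proving the lemma with $c=\alpha$.

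\emph{Main obstacle.} The delicate point is the series decomposition beyond the first good block. For $k_1=1$ it is clean: $\rho$ lies on $\partial_b H[1]$, and since the other side of each $\partial Q_\infty$-edge incident to $\rho$ is the external face, every face of $Q_\infty$ incident to $\rho$ lies inside $H[1]$; hence all $\uhpmg$-neighbours of $\rho$ belong to $\Ti{H[1]}$ and the contraction/Rayleigh step above immediately gives $\Reff^{\uhpmg}(\rho,T_1)\ge R_1$. For $k_i\ge 2$, however, $\rho$ lies outside $H[k_i]$ and diagonals of faces of the surrounding block $Q[k_i]$ can, \emph{a priori}, shortcut from the bottom to the top of the $i$-th layer without traversing $T_{k_i}$. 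Ruling this out---so that the series lower bound genuinely holds term by term---is the main technical hurdle. The natural resolution is to peel off the good blocks iteratively, re-rooting the remainder as a fresh $\uhpmg$ after each contraction using spatial Markov, and to check that the edges of $\uhpmg$ living in the block $Q[k_i]$ are themselves confined to a single layer of the contracted chain and so cannot bridge two distinct cuts $T_j$ and $T_{j'}$.
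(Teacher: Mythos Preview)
Your approach coincides with the paper's: find a linear number of good blocks by a law of large numbers and stack their resistances in series. One small correction first: the good events are not literally i.i.d., since the scale $L[k-1]$ (and hence the entire construction of $Q[k]$) is $\FF[k-1]$-measurable. What holds is that the conditional probability of goodness given $\FF[k-1]$ is uniformly at least $\delta/2$, so the indicators stochastically dominate an i.i.d.\ Bernoulli$(\delta/2)$ sequence; this is enough for the LLN step, and is how the paper phrases it.

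The genuine gap is the one you yourself flag. Your ``Main obstacle'' paragraph concedes that for $k\ge 2$ you have not shown that $T_k$ really separates $\rho$ from $\Gamma[n]$, and your suggested fix (iterative re-rooting, checking edge confinement) is vague and not what is needed. The missing observation is short and is precisely the reason the iterative construction carries $\Gamma[\cdot]$ and $L[\cdot]$ along: by the choice of the root $\rho[k-1]$ and of $L[k-1]$ as the length of $\Gamma[k-1]$, whenever $k$ is good the bottom segment $\partial_b H[k]$ \emph{contains} $\Gamma[k-1]$, i.e.\ the full interface between $Q_\infty[k-1]$ and everything explored before. Consequently $\partial_\ell H[k]$ and $\partial_r H[k]$ lie on $\partial Q_\infty[k-1]\setminus\Gamma[k-1]$, hence on the original half-plane boundary $\partial Q_\infty$, across which there are no internal faces and therefore no edges of $\uhpmg$. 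Thus any $\uhpmg$-path from $\rho$ to $\Gamma[n]$ must first meet a white vertex of $\Gamma[k-1]\subset\partial_b H[k]$, is then trapped in the filling of $H[k]$, and can exit toward $Q_\infty[k]\supset\Gamma[n]$ only through a white vertex of $\partial_t H[k]$. No shortcut through the surrounding block $Q[k]$ is possible, and the series inequality
\[
\Reff^{\uhpmg}\bigl(\rho\leftrightarrow\Gamma[n]\bigr)\ \ge\ \sum_{k=1}^{n-1}\mathbbm{1}_{\{k\text{ good}\}}\,\Reff^{M[k]}\bigl(\partial_b H[k]\leftrightarrow\partial_t H[k]\bigr)
\]
follows at once. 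Once you supply this observation, your argument is complete and matches the paper's.
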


\begin{lemma}\label{lem_distance_root_an}
There is a constant $A>1$ such that almost surely, for $n$ large enough, for every white vertex $v \in \Gamma[n]$, we have $d_{\uhpmg}(\rho,v) \leq A^n$.
\end{lemma}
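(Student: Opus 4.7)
The approach has two steps: first, show that $L[n]$ grows at most geometrically in $n$; then, use the time-cap $\tau\leq L^{3/2}$ in each block's construction to bound the graph distance from $\rho$ to $\Gamma[n]$ by the total size of the first $n$ blocks in the Tutte image.

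\emph{Growth of $L[n]$.} The set $\Gamma[n]$ is contained in $\partial_t Q[n]$ together with the portion of $\Gamma[n-1]$ not covered by the bottom boundary of $Q[n]$, and the latter has length at most $L[n-1]$. Therefore, deterministically,
\[ L[n] \leq L[n-1] + |\partial_t Q[n]| + O(1). \]
Combined with item~\ref{prop_item_not_too_quick} of Proposition~\ref{Prop block construction}, which yields $\E\l[\,|\partial_t Q[n]|\,\big|\,\FF[n-1]\,\r] \leq C\,L[n-1]$, this gives $\E[L[n]\mid\FF[n-1]] \leq (1+C)L[n-1] + O(1)$, and hence $\E[L[n]] \leq K(1+C)^n$ by iteration. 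Markov's inequality and the Borel--Cantelli lemma then imply $L[n] \leq A_1^n$ almost surely for all $n$ large, with $A_1 = 2(1+C)$.

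\emph{Distance bound.} Each block $Q[i]$ is built from at most $L[i-1]^{3/2}$ peeling steps, each contributing one quadrangle. Consider the subgraph $S_n$ of $\uhpmg$ obtained from the white corners and white--white diagonals of all the internal faces of $Q[1]\cup\cdots\cup Q[n]$, including the fillings of any finite holes (these are independent critical Boltzmann quadrangulations by item~\ref{prop_item_markov}, whose volumes are integrable with polynomial dependence on their perimeter). Using item~\ref{prop_item_not_too_quick} together with standard volume estimates for critical Boltzmann quadrangulations and control on the perimeters of holes (e.g.\ via~\eqref{eqn_bad_peeling_case}), one bounds $\E[|E(S_n)|]$ exponentially in $n$, and a further Markov / Borel--Cantelli argument yields $|E(S_n)| \leq A^n$ almost surely for $n$ large and some constant $A>1$. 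For $n$ large enough that some $Q[i]$ has been ``successful'' in the sense of item~\ref{prop_item_dimensions_hole} and has covered the root face of $Q_\infty$, the root vertex $\rho$ lies in $S_n$, and every white $v\in\Gamma[n]$ is also a vertex of $S_n$. Since $S_n$ is connected (each peeling step adds a face sharing a white corner with the existing structure, and successive blocks share vertices along their common $\partial_t/\partial_b$ interface) and the diameter of a connected graph with $m$ edges is at most $m$,
\[ d_{\uhpmg}(\rho, v) \leq \operatorname{diam}(S_n) \leq |E(S_n)| \leq A^n \quad\text{a.s.\ for all $n$ large.} \]

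\paragraph{Main obstacle.} The subtlest part is the distance bound: while the skeleton of peeled faces has size at most $\sum_{i\leq n}L[i-1]^{3/2}\leq C\,A_1^{3n/2}$, the fillings of finite holes can individually have heavy-tailed volumes. One must therefore combine item~\ref{prop_item_not_too_quick}, the polynomial-in-perimeter bound on $\E[|Q_p|]$ for critical Boltzmann quadrangulations, and peeling-step estimates on hole perimeters to control the total filling volume and thus $|E(S_n)|$ almost surely. Some care is also needed to ensure $S_n$ is connected across consecutive blocks, possibly by adding a small number of bridging edges along the shared boundaries.
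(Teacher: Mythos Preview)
Your first step, controlling the growth of $L[n]$ by combining the deterministic inequality $L[n]\leq L[n-1]+|\partial_t Q[n]|+O(1)$ with item~\ref{prop_item_not_too_quick} and Markov/Borel--Cantelli, is correct and is exactly what the paper does.

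The second step, however, has a genuine gap. You want to bound $d_{\uhpmg}(\rho,v)$ by the total number of edges in a connected region $S_n$ containing both $\rho$ and $v$. For $\rho$ to belong to $S_n$ you must include the filling of the finite hole created at the final step $\tau$ of some block (when $Q[i]$ is successful, $\rho$ sits on $\partial_b H[i]\subset\partial Q_\infty$ and is not incident to any peeled face; it is only reached through the Boltzmann filling of $H[i]$). But this filling does \emph{not} have integrable volume once you take the randomness of its perimeter into account. From the proof of item~\ref{prop_item_not_too_quick} one gets $\P(|\partial_b Q^\tau|>aL)\asymp a^{-3/2}$, so the perimeter $P$ of the time-$\tau$ hole has $\E[P^2]=\infty$; and for a critical Boltzmann quadrangulation $\E[|Q_p|]\asymp p^2$. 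Hence $\E[|Q_P|]=\infty$, and the claimed bound ``$\E[|E(S_n)|]$ exponential in $n$'' fails. (The holes created \emph{before} time $\tau$ are fine, since their perimeters are deterministically $\leq L/2$ by the definition of $\tjump$; it is precisely the last step that breaks the argument.) A direct tail argument does not obviously rescue this either, since the composition of a $3/2$-tail for $P$ with a $3/2$-tail for $|Q_p|$ given $p$ is delicate.

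The paper avoids the volume altogether. It proves a separate lemma (Lemma~\ref{lem_degree_boundary}) showing that the $\uhpmg$-distance between two \emph{consecutive} white boundary vertices of $Q_\infty$ has exponential tail, via a short peeling argument around the black vertex in between. One then walks from $\rho[n-1]$ to $\rho[n]$ along $\partial_b Q[n]\subset\partial Q_\infty[n-1]$: there are at most $|\partial_b Q[n]|\leq (C+3)^n$ pairs of consecutive white vertices, each contributing at most $n^2$ to the distance with summable failure probability, whence $d_{\uhpmg}(\rho[n-1],\rho[n])\leq n^2(C+3)^n$. This sidesteps the heavy-tailed volumes entirely; your approach would need an ingredient of this kind to connect $\rho$ to the skeleton of peeled faces.
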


Proposition \ref{almost_main_thm} is an easy consequence of these two results: for each $n$, the set of white vertices of $\Gamma[n]$ separates the root from infinity in $\uhpmg$. By Lemma \ref{lem_distance_root_an}, there is a constant $\alpha>0$ such that, for every $r$ large enough, $\Gamma[\alpha \log r]$ stays in the ball of radius $r$ in $\uhpmg$, so it separates the root from $\partial B_r^{\bullet} (\uhpmg)$. Therefore, by using also Lemma \ref{lem_reff_root_an}, for every $r$ large enough we have
\[\Reff^{\uhpmg} \l( \rho \leftrightarrow \partial B_r^{\bullet}(\uhpmg) \r) \geq \Reff^{\uhpmg} \l( \rho \leftrightarrow \Gamma[\alpha \log r]  \r) \geq c \alpha \log r.\]

\begin{proof}[Proof of Lemma \ref{lem_reff_root_an}]
We call a index $n \geq 1$ \emph{good} if the event in the second point of Proposition \ref{Prop block construction} occurs. If $n$ is a good index, the sub-map $Q[n]$ of $Q_{\infty}[n-1]$ has a finite hole $H[n]$, which is filled with a critical Boltzmann quadrangulation with a simple boundary of length $2p$ for some $p$. We recall (from just before Proposition \ref{Prop block construction}) that the boundary of $H[n]$ can be split into $4$ parts:
\begin{itemize}
\item
the segment of length $L[n]$ on the right of the root of $Q_{\infty}[n-1]$, denoted by $\partial_b H[n]$,
\item
a segment of $\partial Q_{\infty}[n-1]$ on the right of $\partial_{b} H[n]$, denoted by $\partial_r H[n]$,
\item
the segment $\partial H[n] \backslash \partial Q_{\infty}[n-1]$, denoted by $\partial_t H[n]$,
\item
a segment of $\partial Q_{\infty}[n-1]$ on the left of its root, denoted by $\partial_{\ell} H[n]$.
\end{itemize}
Then Item \ref{prop_item_dimensions_hole} of Proposition \ref{Prop block construction} guarantees that we have $\left| \partial_b H[n] \right|=L[n]$ and $\left| \partial_t H[n] \right| \leq L[n]$, but $\left| \partial_{\ell} H[n] \right|, \left| \partial_r H[n] \right| \geq L[n]$, so we can apply Lemma \ref{lem_duality_random}. Let $M[n]$ be the map obtained by applying the \Tutte to the quadrangulation filling the hole $H[n]$. By Lemma \ref{lem_duality_random}, conditionally on $\FF[n-1]$ and on $n$ being a good step, with probability at least $\frac{1}{2}$, we have
\[\Reff^{M[n]} \l( \partial_b H[n] \leftrightarrow \partial_t H[n] \r) \geq 1.\]
We call $n$ \emph{very good} if this is the case. Note that by definition of $L[n-1]$ and our rooting convention for $Q_{\infty}[n-1]$, the segment $\partial_b H[n]$ contains the boundary between $Q_{\infty}[n-1]$ and $Q_{\infty} \backslash Q_{\infty}[n-1]$. On the other hand, the top boundary $\partial_t H[n]$ separates $\partial_b H[n]$ from the top boundary of $Q[n]$, and therefore from $Q_{\infty}[n]$. Therefore, as can be seen on Figure \ref{fig_onion}, for every good $n$, $M[n]$ separates on the one hand the root $\rho$ and the maps $M[i]$ for $i<n$, and on the other hand the $M[i]$ for $i>n$ and infinity. It follows that
\[\Reff^{\uhpmg} \l( \rho \leftrightarrow \Gamma[n] \r) \geq \sum_{i=1}^{n-1} \mathbbm{1}_{\mbox{$i$ is good}} \Reff^{M[i]} \l( \partial_b H[i] \leftrightarrow \partial_t H[i] \r),\]
where the term $i$ is at least $1$ if $n$ is very good. Therefore, if we denote by $N[n]$ the number of very good indices between $1$ and $n-1$, we have
\begin{equation}\label{eqn_pile_of_blocks}
\Reff^{\uhpmg} \l( \rho \leftrightarrow Q_{\infty}[n] \r) \geq N[n].
\end{equation}

Finally, by the second point of Proposition \ref{Prop block construction}, the conditional probability for any index $n$ to be good conditionally on $\FF[n-1]$ is bounded from below by some $\delta>0$. Moreover, since the Boltzmann quadragulations filling the holes for different good steps are independent, the events $\{ \mbox{$n$ is very good} \}$ for $n \geq 1$ dominate i.i.d. events of probability $\frac{\delta}{2}$. Therefore, by the law of large number, \as for every $n$ large enough we have
\[N[n] \geq \frac{\delta}{3} n,\]
which, combined with \eqref{eqn_pile_of_blocks}, proves the lemma.
\end{proof}

We now prove Lemma \ref{lem_distance_root_an}, that is, we control the graph distance between $\rho$ and $Q_{\infty}[n]$ in $M^g_\infty$. The next lemma allows us to control how distances may increase when we apply the Tutte mapping. Its proof is delayed until the end of the section. We label the vertices of $\partial Q_{\infty}$ as $(u_i)_{i \in \Z}$ as on Figure \ref{figure_proof_degree}, so that $u_0$ is the root vertex and the $u_i$ with $i$ even are the white vertices.

\begin{lemma}\label{lem_degree_boundary}
The distance $d_{\uhpmg}(u_0, u_2)$ has exponential tail.
\end{lemma}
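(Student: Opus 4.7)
The plan is to reduce the lemma to an exponential tail bound on the degree of the black boundary vertex $u_1$ lying between $u_0$ and $u_2$ in $Q_\infty$, and then prove this bound by a targeted peeling argument. For the reduction, observe that all neighbors of $u_1$ in $Q_\infty$ are white by bipartiteness, and read in cyclic order around $u_1$ between its two boundary edges they form a sequence $u_0 = w_0, w_1, \ldots, w_{k-1}, w_k = u_2$, separated by $k$ internal quadrangular faces $f_1, \ldots, f_k$ (the external face lies on the outer side between $w_k$ and $w_0$). In particular $\deg_{Q_\infty}(u_1) = k+1$. Each $f_i$ is a quadrangle whose two white corners are precisely $w_{i-1}$ and $w_i$, so the \Tutte inserts the edge $w_{i-1}w_i$ in $\uhpmg$. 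Concatenating these $k$ edges gives a path of length $k$ from $u_0$ to $u_2$ in $\uhpmg$, hence
\[ d_{\uhpmg}(u_0, u_2) \;\leq\; \deg_{Q_\infty}(u_1) - 1. \]
It therefore suffices to prove that $\deg_{Q_\infty}(u_1)$ has exponential tail.

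For this, I would perform a targeted filled-in peeling exploration of $\uhpqs$ as in Section \ref{subsec_UIHPQ}, using the peeling algorithm that at each step peels the boundary edge of the infinite hole incident to $u_1$ on its right (which exists as long as $u_1$ lies on this boundary). Let $T$ be the first step at which $u_1$ is no longer on the infinite hole boundary. Each peeling step reveals exactly one internal face incident to $u_1$, and every such face is revealed before time $T$, so $T = k = \deg_{Q_\infty}(u_1) - 1$. By the spatial Markov property of $\uhpqs$, conditionally on the past exploration and on the event $\{T > n\}$, the next peeled face is distributed according to the universal peeling probabilities, independently of the history. Among the cases of Figure \ref{fig_peeling_cases}, consider the specific case in which the new ``top'' vertex of the peeled quadrangle is identified with the left boundary neighbor of $u_1$ in the current infinite hole, forming a single finite hole of perimeter $2$ (a bigon filled by the trivial quadrangulation $\dagger$) that absorbs the remaining boundary edge of $u_1$. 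In that case both boundary edges of $u_1$ are detached from the infinite hole, so $u_1$ is closed. This case has probability bounded below by a universal constant $p>0$, as can be read off directly from the explicit peeling probabilities of \cite{ACpercopeel} (or from a minimal-perimeter version of \eqref{eqn_bad_peeling_case}). Consequently $T$ is stochastically dominated by a geometric random variable of parameter $p$, and therefore $\deg_{Q_\infty}(u_1)$, hence $d_{\uhpmg}(u_0, u_2)$, has exponential tail.

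The main obstacle is to pin down, rigorously and uniformly, a peeling configuration that detaches $u_1$ from the infinite hole boundary with probability bounded below independently of the current state. The perimeter-$2$ finite-hole case described above is the cleanest candidate: it is always available (since $u_1$ has a well-defined left boundary neighbor for as long as the exploration is running), it always closes $u_1$, and its probability depends only on the local peeling laws, which are universal by the spatial Markov property of $\uhpqs$. Once this uniform lower bound is in place, the geometric-domination argument is immediate.
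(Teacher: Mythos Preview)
Your reduction $d_{\uhpmg}(u_0,u_2)\leq\deg_{Q_\infty}(u_1)-1$ via the white--white diagonals of the faces surrounding $u_1$ is correct. The gap is in the claimed identity $T=\deg_{Q_\infty}(u_1)-1$ for the filled-in exploration. It fails whenever the peeled quadrangle has its far white vertex $w$ identified with a boundary vertex strictly to the left of $u_0$, say $w=u_{-2j}$ for some $j\geq 1$: that step produces an \emph{internal} edge $u_1 u_{-2j}$ and creates a finite hole whose boundary is the segment $u_{-2j},u_{-2j+1},\ldots,u_0,u_1$ closed up by this new edge. After filling, $u_1$ has left the infinite-hole boundary and you stop---but all the faces incident to $u_1$ that lie between the edges $u_1 u_{-2j}$ and $u_1 u_0$ (as one turns around $u_1$) sit inside that hole and were never individually peeled. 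So in general only $T\leq\deg_{Q_\infty}(u_1)-1$ holds, with strict inequality possible by an arbitrary amount, and an exponential tail for $T$ does not transfer to the degree. Your designated closing event (essentially $w=u_0$) competes at every step with the events $w=u_{-2j}$, $j\geq 1$, which also terminate the exploration but do so prematurely.

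The fix, and what the paper does, is to run a \emph{non}-filled-in exploration: peel the edge adjacent to $u_1$ inside whichever component of the undiscovered region currently contains the target edge $e_1=u_1 u_2$, even when that component has become finite. Now each step really uncovers one new face at $u_1$, no faces are skipped, and the conditional probability of reaching $u_2$ at the next step depends only on the perimeter $2p$ of the current component. Since this probability is positive for every finite $p$ and tends to a positive limit as $p\to\infty$, it is uniformly bounded below, and geometric domination goes through. The paper reads off the bound on $d_{\uhpmg}(u_0,u_2)$ directly from the length of the resulting diagonal path rather than passing through the degree, but with the peeling set up correctly your route via $\deg_{Q_\infty}(u_1)$ works equally well.
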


Note also that by the invariance of $\uhpqs$ by root translation along the boundary, this remains true if we replace $u_0$ and $u_2$ by any two consecutive white vertices of $\partial \uhpqs$.

\begin{proof}[Proof of Lemma \ref{lem_distance_root_an} using Lemma \ref{lem_degree_boundary}]
The idea of the proof is to bound distances between vertices on the boundary of $\partial Q[n]$ by using paths following the boundary. Although this might seem very crude, we only care about the logarithms of the distances, so bounding $r$ by $r^2$ is not a problem here. A slight difficulty is that we are interested in distances in $\uhpmg$, so we need to make sure that moving around a black vertex of the quadrangulation does not cost too much. This is where we need to use Lemma \ref{lem_degree_boundary}.

We first argue that $L[n]$ grows at most exponentially in $n$. For every $n \geq 0$, we recall that $\FF[n]$ is the $\sigma$-algebra generated by $Q_{\infty} \backslash Q_{\infty}[n]$. By definition of $\Gamma[n]$ and $L[n]$, the segment $\Gamma[n+1]$ is included in the union of $\Gamma[n]$ and the top boundary of $Q[n+1]$ (see Figure \ref{fig_onion}), so $L[n+1] \leq 1+L[n]+|\partial_t Q[n+1]|$. Hence, it follows from Item \ref{prop_item_not_too_quick} of Proposition \ref{Prop block construction} that $\E \left[ \frac{L[n+1]}{L[n]} | \FF[n] \right] \leq C+2$ for every $n \geq 1$, where $C$ is given by Proposition \ref{Prop block construction}. Therefore, we have $\E [L[n]] \leq (C+2)^n$, so $\P \l( L[n] \geq (C+3)^n \r)$ decreases exponentially in $n$, and $L[n] \leq (C+3)^n$ for all large enough $n$. Similarly, we easily obtain $|\partial_b Q[n]| \leq (C+3)^n$ for $n$ large enough.

On the other hand, Lemma \ref{lem_degree_boundary} states that the distance in $\uhpmg$ between any two consecutive white vertices of $\partial Q_{\infty}[n]$ has exponential tail (recall that $Q_{\infty}[n]$ has the same law as $Q_{\infty}$). By a crude union bound, it follows that with probability at least $1-(C+3)^{n+1} e^{-cn^2}$ (for some absolute constant $c>0$), for any two consecutive white vertices of $\partial Q_{\infty}[n]$ at distance at most $(C+3)^{n+1}$ from the root along $\partial Q_{\infty}[n]$, the distance between them in $\uhpmg$ is at most $n^2$. If this is the case and $|\partial_b Q[n+1]| \leq (C+3)^{n+1}$, then the distance in $\uhpmg$ between any two vertices of $\partial_b Q[n+1]$ (and in particular of $\vG[n]$) is at most $n^2 (C+3)^{n+1}$. In particular, this is true for $\rho[n]$ and $\rho[n+1]$, since by definition $\rho[n+1]$ is the leftmost vertex of $\partial_b Q[n+1]$.
Therefore, almost surely, for $n$ large enough, we have
\begin{equation*}
d_{\uhpmg}(\rho[n], \rho[n+1]) \leq n^2 (C+3)^{n+1}.
\end{equation*}
By induction, we obtain that almost surely, for $n$ large enough, for $n$ large enough, we have
\[d_{\uhpmg}(\rho, \rho[n]) \leq (C+4)^n .\]
Since we also have $d_{\uhpmg}(\rho[n],v) \leq n^2 (C+3)^{n+1}$ for $v \in \Gamma[n]$, we conclude
\[d_{\uhpmg}(\rho,v) \leq (C+5)^n\]
for every $v \in \Gamma[n]$, which proves the lemma.
\end{proof}

\begin{proof}[Proof of Lemma \ref{lem_degree_boundary}]
The idea is to discover the neighbourhood of $u_1$ in $\uhpqs$ by a peeling exploration, until there is a set of faces of $Q_\infty$ containing edges of $M^g_\infty$ that allow to go from $u_0$ to $u_2$ in $M^g_\infty$. This argument essentially goes back to \cite{AS03}. More precisely, we use a non-filled-in peeling algorithm, i.e. we do not discover the finite holes cut out by the explored face. The algorithm is the following. Let $e_1$ be the edge of $\partial \uhpqs$ between $u_1$ and $u_2$. At each step $i$, we consider the connected component of the undiscovered part that contains $e_1$ (this component may be finite or infinite). The edge we peel is the edge on the left of $e_1$ along the boundary of this component. We stop the exploration when we discover a face that is incident to $u_2$ (see Figure \ref{figure_proof_degree}).

The probability to end the exploration at some step conditionally on the previous ones depends on the perimeter $2p$ of the hole that we explore ($p$ may be finite or infinite). We call it $q_p$. It is classical that $q_p>0$ for every $p$, and that $q_p \to q_{\infty}>0$ as $p \to \infty$, so this probability is bounded from below by a positive constant. Hence, the number of steps of the exploration is a.s. finite and has exponential tail.

We finally argue that $d_{\uhpmg}(u_0, u_2)$ is bounded by the number of steps of the exploration described above. At each step $i$, let $f_i$ be the unique face that we discover. We color in red the diagonal joining the two white vertices incident to $f_i$ (this diagonal may be a loop). Note that the red diagonals are edges of $\uhpmg$. It is quite easy to see that the $(i+1)$-th red diagonal starts where the $i$-th ends, so the set of red diagonals forms a path (see Figure \ref{figure_proof_degree}). Moreover, the first one we draw is incident to $u_0$ (the first peeled edge is the one between $u_0$ and $u_1$), and the last one is incident to $u_2$. Therefore, there is a path of red diagonals joining $u_0$ to $u_2$, and its length is bounded by the duration of the exploration, so it has exponential tail, which proves the lemma.

\begin{figure}
\begin{center}
\includegraphics[width=0.7\textwidth]{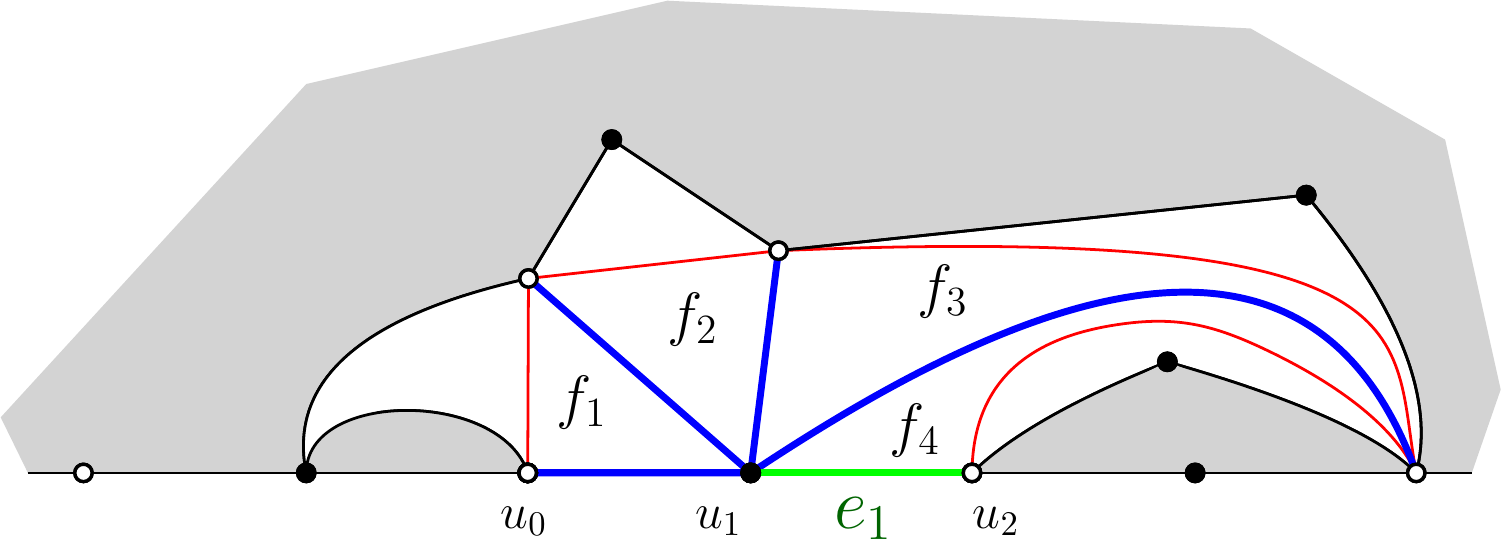}
\end{center}
\caption{Proof of Lemma \ref{lem_degree_boundary}: the red diagonals form a connected sets, and the successive peeled edges are in blue.}\label{figure_proof_degree}
\end{figure}
\end{proof}

\section{The UIHPM as a local limit}
\label{Sec_UIHPM_is_limit_of_Boltzmann_maps}

The goal of this Section is to prove Theorem \ref{thm_defn_UIHPM}. We recall that $M_{\infty}$ is the \UIHPM defined in Section \ref{subsec_Tutte_UIHPM} as the unique infinite bead of $\Ti{Q_{\infty}}$ and that $M_p$ is the critical Boltzmann random map with simple boundary of length $p$. Our goal is to prove the local convergence $M_p \to M_{\infty}$ as $p\to\infty$. The basic strategy of the proof is to use the Tutte mapping $\TT$ of Section \ref{subsec_Tutte_UIHPM} to decompose a critical Boltzmann quadrangulation $Q_p$ into several components. In particular, one of these components will be mapped by $\TT$ to a map with simple boundary that we call the \emph{Tutte core} of $Q_p$, and that will correspond to the infinite bead when $p \to +\infty$. We then prove that the Tutte core has the law of a critical Boltzmann map with a simple boundary (of random length). Finally, we let the boundary length $p$ go to infinity, and we find that the Tutte core converges to the \UIHPM.

While this strategy seems simple, a few technical problems make our proof longer than what might be expected. Two of these issues are that the Tutte core of $Q_p$ has a random perimeter, and that the core is not obvious to define in a canonical way for finite maps (unlike in $\Ti{Q_\infty}$, where it is the only infinite bead). To solve these two problems, we consider random maps $M^{\bullet \bullet, z}$ with randomized perimeters. More precisely, $M^{\bullet \bullet, z}$ is a random map with simple boundary, critical Boltzmann weight on the number of edges, Boltzmann weight $z$ on the perimeter, and three distinct marked edges on the boundary.
 
 This triple marking has two advantages: first, it makes the partition function blow up as $z$ goes to its critical value $z_c=2/9$, which guarantees that the perimeter of $M^{\bullet \bullet, z}$ goes to $+\infty$. Second, marking three boundary edges of a finite map $m$ with non-simple boundary allows us to define the “core with simple boundary” of $m$ as the bead containing the “center of gravity” between the three marked edges. This core with simple boundary plays in $M^{\bullet \bullet, z}$ the role of the infinite bead in $\Ti{Q_\infty}$. This is how we will define the Tutte core of a quadrangulation in Section \ref{Sec_core_decomp_of_quads}. 

We will first prove Proposition \ref{prop_convergence_randomized_perimeters}, which states that
\[M^{\bullet \bullet, z} \xrightarrow[z \to z_c]{(d)} M_{\infty}\]
in the local topology. 
To go from random perimeters to fixed perimeters, we will then use asymptotics of the partition function of $M_p$, which are given by \eqref{asymptotic coefficients_M}.

\subsection{The core decomposition of quadrangulations}
\label{Sec_core_decomp_of_quads}

Let $\triQ$ be the set of finite quadrangulations with a simple boundary and three (pairwise distinct) marked boundary edges $e_1, e_2, e_3$, each oriented from a white to a black vertex so that the external face is on their right, and where $e_1$ is the root edge. Let $q \in \triQ$. For each $i \in \{1,2,3\}$, let $e'_i$ be the edge of $\Ti{q}$ immediately to the left of $e_i$ (when turning around its starting point counterclockwise), oriented in such a way that $e_i$ and $e'_i$ have the same starting point, see \figref{Fig bead and core}.

The beads of $\Ti{q}$ (i.e. the components separated by pinchpoints) have a tree structure, so there is a unique bead such that, if we remove it, none of the connected components contains more than one marked edge. This particular bead is called the \emph{Tutte core} of $q$, and denoted by $\Core(q)$ (see \figref{Fig bead and core}). We will also denote by $\Coreq(q)$ the part of $q$ corresponding to $\Core(q)$ via the Tutte mapping. More precisely, $\Coreq(q)$ is the quadrangulation with boundary obtained by keeping only the faces of $q$ containing an edge of $\Core(q)$, where we only glue faces of $\Coreq(q)$ along edges of $q$ that are inside $\Core(q)$ (the other edges are “cut open”, see Figure \ref{Fig illustration découpe}). We highlight that $\Core(q)$ is a map with a simple boundary, whereas $\Coreq(q)$ is a quadrangulation with truncated boundary, i.e. with simple boundary where all black boundary vertices have degree $2$. We have already noted in Section \ref{subsec_Tutte_UIHPM} that the Tutte mapping is a bijection between maps with simple boundary and quadrangulations with truncated boundary.

The Tutte core $\Core(q)$ is equipped with three marked boundary vertices $v_1, v_2, v_3$ giving the positions of the edges $e'_1, e'_2, e'_3$ (these are the squares on \figref{Fig bead and core}): the vertex $v_i$ is the vertex of $\partial \Core(q)$ which is the closest from the starting point of $e'_i$. It is not always true that the three marked vertices $v_1, v_2, v_3$ are distinct. We denote the set of quadrangulations $q \in \triQ$ where this is the case by $\A$. Our decomposition is simpler to describe if restricted to $\A$, and we will check later that a random quadrangulation with a large boundary length is in $\A$ with high probability. We assume $q \in \A$ from now on.

\paragraph{Recovering quadrangulations from their Tutte core.}
Of course, it is not possible to recover $q$ given only $\Coreq(q)$ (or equivalently given $\Core(q)$), but $\Coreq(q)$ is a part of $q$. Fortunately, there exists a unique way to obtain $q$ by gluing quadrangulations with a simple boundary to edges of the boundary of $\Coreq(q)$. Each of these quadrangulations is glued either along one edge, or along two consecutive boundary edges with the same white endpoint. More precisely, let $v$ be a vertex of $\partial \Core(q)$ (i.e. a white vertex of $\partial \Coreq(q)$). We denote by $e$ (resp. $e'$) the edge of $\partial \Coreq(q)$ just before (resp. just after) $v$ in the trigonometric order. Then we are in one of the two following situations:
\begin{itemize}
\item
one quadrangulation with a simple boundary is glued to the two edges $e$ and $e'$. Its root is glued to $e$;
\item
two quadrangulations with a simple boundary are glued respectively to $e$ and $e'$. The root of the first one is glued at $e$, and we glue to $e'$ the edge of the boundary of the second one that precedes its root (in the trigonometric order).
\end{itemize}
In other words, to each vertex $v$ of $\partial \Core(q)$, we associate an element $q_v$ of $\monoP :=\monoQ \cup \monoQ^2$.
Note that our gluing convention is designed to be consistent with the fact that the root edge is always directed from a white vertex to a black one, and the external face is always on the right of the root edge. We also recall that $\monoQ$ contains the edge-quadrangulation $\dagger$ with boundary length 2 and no inner face.

\begin{figure}[h!]
\begin{center}
\includegraphics[width=0.4\textwidth]{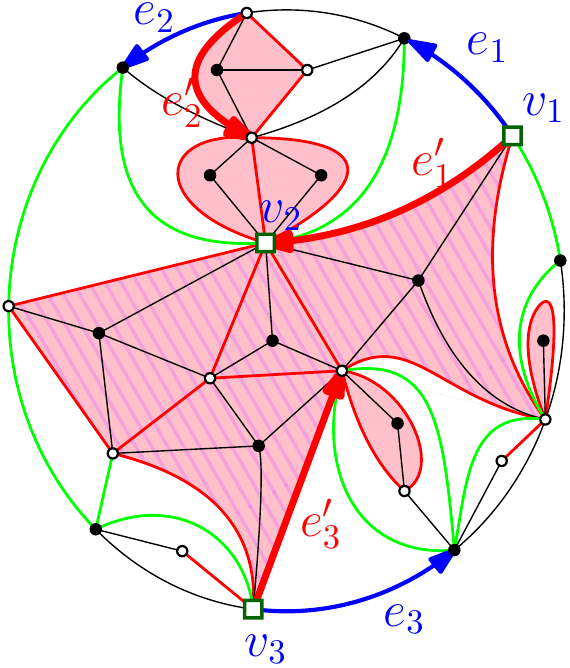}\qquad
\includegraphics[width=0.55\textwidth]{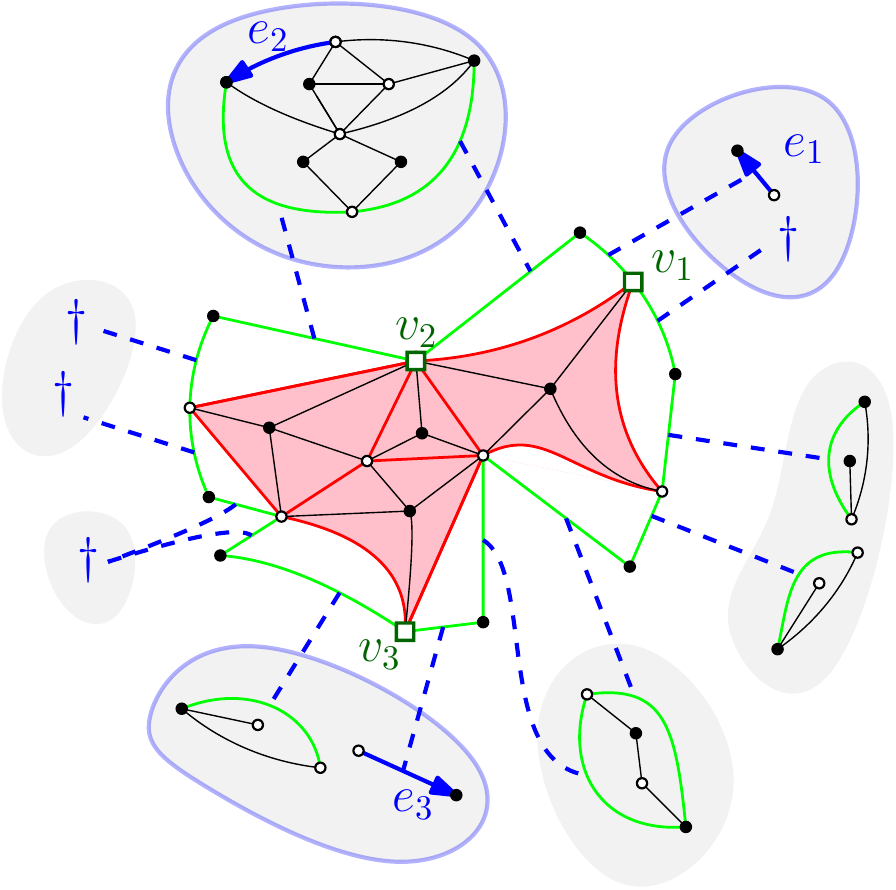}
\end{center}
\caption{The core decomposition of a quadrangulation $q$. On the left, the marked edges of $q$ are in blue, the map $\Ti{q}$ is in red, and the Tutte core is hatched in purple. On the right, we have cut $q$ along the green edges to isolate $\Coreq(q)$ in the center (inside the large green cycle). The root vertices of $\Core(q)$ are the squares. Around the core are the elements of $\monoP$ and $\biP$ that must be glued to $\Coreq(q)$ to recover $q$ (those bearing one of the marked edges, and thus belonging to $\biP$, are circled in blue).}
\label{Fig illustration découpe}
\label{Fig bead and core}
\end{figure}

Finally, the parts corresponding to the marked vertices $v_1, v_2, v_3$ play a special role, because they also need to bear the marked edges $e_1, e_2, e_3$ on their boundaries. If only one quadrangulation $q_i$ is glued to $v_i$, it must have an additional marked edge, which may not be the root of $q_i$ since the root of $q_i$ is glued to $\Coreq(q)$. Hence $q_i$ is an element of the set $\biQ$ of quadrangulations with two distinct marked edges on the boundary, the first one being the root edge. If there are two different quadrangulations $q_i$ and $q'_i$ glued near $v_i$, then (by our assumption that the $v_j$ are distinct) exactly one of them bears an additional marked edge. If it is $q_i$ (the first one in trigonometric order), the marked edge cannot be the root of $q_i$ since the root of $q_i$ is glued to an edge of $\Coreq(q)$, so $q_i \in \biQ$ and $q'_i \in \monoQ$. On the other hand, if the additional marked edge is on $q'_i$, it may or may not be the root of $q'_i$, so $q_i \in \monoQ$ and $q'_i \in \monoQ \cup \biQ$. Therefore, to each of the three vertices $v_1$, $v_2$, $v_3$, we need to associate an element of
\begin{equation}\label{eqn_defn_J}
\biP= \biQ \cup \l( \monoQ \times \biQ \r) \cup \l( \biQ \times \monoQ \r) \cup \monoQ^2.
\end{equation}
See Figure \ref{Fig bead and core} for an example.

For every $p$, let $\triM_p$ be the set of tri-marked finite maps with a simple boundary of length $p$. We have just built an application
\begin{equation}\label{bijection_core_decomposition}
\Psi : \A \longrightarrow \bigcup_{p \geq 3} \triM_p \times \biP^3 \times \monoP^{p-3},
\end{equation}
where we recall that $\A$ is a subset of $\triQ$, $\monoP=\monoQ \cup \biQ$, and $\biP$ is given by \eqref{eqn_defn_J}. The proof that $\Psi$ is a bijection is straightforward, since we have already explained how to glue back the small components to the core.

\subsection{The core decomposition of Boltzmann models}
\label{Sec_Generating_Series}

We now see how the bijection $\Psi$ translates in terms of the generating functions of quadrangulations and maps with a simple boundary. For this, we need to understand the effect of this decomposition on the perimeters and volumes of the objects.

In order to define bivariate generating functions, we use the following combinatorial parameters. If $q$ is a quadrangulation with a boundary, we let its volume parameter $\bv(q)$ be its number of inner faces, and if $m$ is a planar map we let $\bv(m)$ be its total number of edges. Recall that the perimeter $|\partial m|$ of a map $m$ is the degree of its external face. If $q$ is a quadrangulation, we let the boundary length parameter be $\bp(q) \eqdef |\partial q|/2$, and if $m$ is a planar map we let $\bp(m) \eqdef |\partial m|$. 

If $\X$ is a class of maps or quadrangulations, we define its generating function $\XX$ as
\[ \XX(y,z)=\sum_{m \in \X} y^{\bv(m)} z^{\bp(m)}.\]
Note that $\bp$ and $\bv$ are chosen so that this definition of the generating functions matches with the definitions of Section \ref{subsec_basics}.
With the notations of the previous subsection $\X$ may stand for $\monoQ$, $\biQ$, $\triQ$, $\triQA$, $\monoP$,  $\biP$, $\monoM$, $\biM$ or $\triM$, so we have just defined the generating functions $\monoQQ$, $\biQQ$, $\triQQ$, $\triQQA$, $\monoPP$,  $\biPP$, $\monoMM$, $\biMM$ and $\triMM$ (for $\monoPP$ and $\biPP$, we define the perimeter and the volume parameters of a pair of quadrangulations as the sums of the perimeter and volume parameters of its two components).

The generating functions $\monoQQ$ and $\monoMM$ are computed in Section \ref{subsec_basics}.
Note that adding marked edges on the boundary is equivalent to derivating with respect to $z$. More precisely, if $\XX$ is one of the letters $\QQ$ and $\MM$, we have (recall the marked edges have to be distinct):
\[ \XX^{\bullet}(y,z)=z^2 \frac{\partial}{\partial z} \frac{\XX(y,z)}{z} \quad \mbox{ and } \quad \XX^{\bullet \bullet}(y,z)=z^3 \frac{\partial^2}{\partial z^2} \frac{\XX(y,z)}{z}.\]
By the definition of $\monoP$ and $\biP$ \eqref{eqn_defn_J}, we also have
\begin{equation}\label{formula_gen_function_J}
\monoPP=\monoQQ+\monoQQ^2 \mbox{ and } \biPP=2 \monoQQ \biQQ + \monoQQ^2 + \biQQ.
\end{equation}
The regime we are interested in is the regime where $y=\frac{1}{12}$ is fixed and $z$ goes to its critical value. More precisely, the value $y=\frac{1}{12}$ is critical in the sense that $\monoMM(y,z)$ and $\monoQQ(y,z)$ are infinite as soon as $y>\frac{1}{12}$. Moreover, by \eqref{formula_quad_Zp}, we have $\monoQQ \l( \frac{1}{12}, z \r)<+\infty$ if and only if $z\leq \frac{2}{9}$, with
\begin{equation}\label{computation_Q_critical}
\monoQQ\l(\frac{1}{12}, \frac{2}{9}\r) = \frac{1}{3} , \qquad \biQQ\l(\frac{1}{12}, \frac{2}{9}\r) =  \frac{1}{9} \quad \mbox{and} \quad \triQQ\l(\frac{1}{12}, z\r) \underset{z \to 2/9}{\sim} \frac{2 \sqrt{2}}{27 \sqrt{3}} \frac{1}{\sqrt{2/9-z}}. 
\end{equation}
Similarly, using \eqref{generating_function_M}, we have $\monoMM \l(\frac{1}{12},z \r)<+\infty$ if and only if $z \leq 2$, with 
\begin{equation}\label{computation_M_critical}
\monoMM\l(\frac{1}{12}, 2\r) = \frac{1}{3} \quad \mbox{ and } \quad 
 \triMM\l(\frac{1}{12}, z\r) \underset{z \to 2}{\sim} \frac{1}{2} \frac{1}{\sqrt{2-z}}.
\end{equation}

Let us now translate the bijection $\Psi$ of \eqref{bijection_core_decomposition} in terms of generating functions. For $q\in \A$, if $\Psi(q) = \l( m, (c_i)_{1 \leq i \leq 3}, (c'_i)_{1 \leq i \leq \bp(m)-3} \r)$, then we have
\begin{align*}
\bv(q) &= \bv(m) + \sum_{i=1}^3 \bv(c_i) + \sum_{i=1}^{\bp(m)-3} \bv(c'_i),\\
\bp(q) &= \bp(m) + \sum_{i=1}^3 (\bp(c_i)-1) + \sum_{i=1}^{\bp(m)-3} (\bp(c'_i)-1).
\end{align*}
Therefore, by standard algebraic manipulations (see e.g. \cite{FS09}), the bijection $\Psi$ of \eqref{bijection_core_decomposition} translates into the identity
\begin{equation}
\label{Eq_serie_generatrice_A}
\triQQA(y,z) = \triMM\l( y, \frac{1}{z} \monoPP(y,z) \r) \pfrac{\biPP}{\monoPP}^3(y,z).
\end{equation}

This identity on generating functions can be translated into a probabilistic statement about Boltzmann-distributed models. In all that follows, we stay in the case $y=\frac{1}{12}$. If $\X$ is a class of maps or quadrangulations, we define the \emph{$z$-Boltzmann map on $\X$} as the random map $X^{z}$ such that, for every $x \in \X$, we have
\[ \P \l( X^{z}=x \r)= \frac{1}{\XX\l( \frac{1}{12}, z \r)} \l( \frac{1}{12} \r)^{\bv(x)} z^{\bp(x)}.\]
We also denote by $X_p$ the Boltzmann map $X^{z}$ conditionned on $\bp(X^z) = p$. The next result is just the probabilistic translation of \eqref{Eq_serie_generatrice_A}.

\begin{proposition}
\label{Prop_Decomposition_of_Boltzmann_quad_is_Boltzmann}
Let $z \in [0,2/9)$. Then the Tutte core of $A^{z}$ is the $\l( \frac{1}{z} \monoPP \l(\frac{1}{12},z \r) \r)$-Boltzmann map on $\triM$. Moreover, conditionally on the Tutte core having perimeter $p$, if $\Psi(A^z) = \l( M, (C_i)_{1\leq i \leq 3}, (C'_i)_{1 \leq i \leq p-3} \r)$, then
\begin{itemize}
\item $\l( M, (C_i)_{1\leq i \leq 3}, (C'_i)_{1 \leq i \leq p-3} \r)$ is an independent family;
\item the Tutte core $M$ has the law of $M^{\bullet \bullet}_p$;
\item the $C_i$ are \iid $z$-Boltzmann on $\biP$;
\item the $C'_i$ are \iid $z$-Boltzmann on $\monoP$.
\end{itemize}
\end{proposition}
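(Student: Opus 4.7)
The plan is to prove the proposition by directly unfolding the bijection $\Psi$ from Section~\ref{Sec_core_decomp_of_quads} at the level of probability mass functions: the statement is essentially a probabilistic restatement of the generating-function identity~\eqref{Eq_serie_generatrice_A}. First, for each $q\in\A$ with $\Psi(q)=(m,(c_i)_{1\le i\le 3},(c'_j)_{1\le j\le p-3})$, where $p=\bp(m)$, I would use the additivity of $\bv$ and $\bp$ under $\Psi$ (both of which are built into the decomposition in Section~\ref{Sec_core_decomp_of_quads}) to factor the Boltzmann weight as
\[
(1/12)^{\bv(q)}\,z^{\bp(q)} = (1/12)^{\bv(m)}\,z^{-p}\cdot\prod_{i=1}^{3}\l[(1/12)^{\bv(c_i)}z^{\bp(c_i)}\r]\cdot\prod_{j=1}^{p-3}\l[(1/12)^{\bv(c'_j)}z^{\bp(c'_j)}\r].
\]
The $z^{-p}$ factor collects the $-1$ corrections which account for the $p$ boundary edges of $\Coreq(q)$ being replaced by the boundaries of the glued pieces.

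Next, dividing by $\triQQA(1/12,z)$ and substituting \eqref{Eq_serie_generatrice_A}, a small rearrangement (multiplying numerator and denominator by $\monoPP(1/12,z)^p$) produces the factorization
\[
\P(A^z=q) = \frac{(1/12)^{\bv(m)}(\monoPP/z)^{p}}{\triMM(1/12,\monoPP/z)}\cdot\prod_{i=1}^{3}\frac{(1/12)^{\bv(c_i)}z^{\bp(c_i)}}{\biPP(1/12,z)}\cdot\prod_{j=1}^{p-3}\frac{(1/12)^{\bv(c'_j)}z^{\bp(c'_j)}}{\monoPP(1/12,z)}.
\]
Each factor is then recognized as a Boltzmann mass function: the first is the mass of $m$ under the $w$-Boltzmann law on $\triM$ with $w=\monoPP(1/12,z)/z$; the three middle factors are the masses of the $c_i$ under the $z$-Boltzmann law on $\biP$; the last $p-3$ are the masses of the $c'_j$ under the $z$-Boltzmann law on $\monoP$. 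This product form immediately delivers the independence of the core and the attached pieces (and of the pieces among themselves) together with their marginal laws.

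Finally, the claim that the conditional law of the core given $\bp(M)=p$ equals that of $M^{\bullet\bullet}_p$ follows because conditioning any $w$-Boltzmann law on a fixed perimeter kills the $w^{\bp}$ factor and leaves a distribution on $\triM_p$ with mass proportional to $(1/12)^{\bv(m)}$, which is precisely the law of $M^{\bullet\bullet}_p$. There is no real obstacle in this argument beyond careful algebraic bookkeeping: all the combinatorial content has been packaged into the bijection $\Psi$ and the identity~\eqref{Eq_serie_generatrice_A} of the previous subsections. The one point requiring minor care is to distinguish the three marked attachments (which carry an extra marked boundary edge and thus live in $\biP$) from the $p-3$ unmarked ones (which live in $\monoP$); this is exactly why the statement is restricted to $\A$, where the three marked vertices of the core are distinct and the attachment at each marked $v_i$ is unambiguously an element of $\biP$.
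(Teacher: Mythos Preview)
Your proof is correct and is exactly what the paper intends: the paper states that Proposition~\ref{Prop_Decomposition_of_Boltzmann_quad_is_Boltzmann} ``is just the probabilistic translation of~\eqref{Eq_serie_generatrice_A}'', and you have carried out precisely that translation by pushing the bijection $\Psi$ through to the level of Boltzmann weights and recognizing the resulting product as a product of the claimed marginal laws. (Note that your $z^{-p}$ factor, coming from $\bp(q)=\sum_i(\bp(c_i)-1)+\sum_j(\bp(c'_j)-1)$, is indeed the correct bookkeeping needed for~\eqref{Eq_serie_generatrice_A} to hold, even though the displayed formula for $\bp(q)$ in the paper just above~\eqref{Eq_serie_generatrice_A} carries an extra $\bp(m)$ term that appears to be a typo.)
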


Finally, let us look at what happens when $z \to \frac{2}{9}$: the generating function $\triQQA \l( \frac{1}{12}, z \r)$ goes to $+\infty$ whereas for every fixed $p \geq 1$, the contribution to $\triQQA \l( \frac{1}{12}, z \r)$ of quadrangulations with perimeter $2p$ stays bounded. Hence, we have $\left| \partial A^{z} \right| \to +\infty$ in probability as $z \to \frac{2}{9}$.
Moreover, \eqref{formula_gen_function_J} gives $\frac{1}{2/9} \monoPP \l( \frac{1}{12},\frac{2}{9} \r)=2$ and $\biPP \l( \frac{1}{12},\frac{2}{9} \r)<+\infty$. Hence, when $z \to \frac{2}{9}$, the Tutte core also becomes critical and, for the same reason as $A^{z}$, its perimeter goes to $+\infty$ in probability. On the other hand, the parts glued to the core converge in distribution to the finite maps $J^{2/9}$ and $J^{\bullet, 2/9}$.

\subsection{Convergence of Boltzmann maps with random perimeter}
\label{Sec_cv_Boltzmann_maps}

Our goal is now to prove a version of Theorem \ref{thm_defn_UIHPM} for Boltzmann maps with randomized perimeter. We recall that $M_{\infty}$ is the \UIHPM of Section \ref{subsec_Tutte_UIHPM}.

\begin{proposition}\label{prop_convergence_randomized_perimeters}
We have the local convergence 
\[ M^{\bullet \bullet, z} \xrightarrow[z \to 2]{(d)} M_{\infty}.\]
\end{proposition}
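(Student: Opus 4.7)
The plan is to invert the core decomposition of Proposition~\ref{Prop_Decomposition_of_Boltzmann_quad_is_Boltzmann}: realize $M^{\bullet\bullet,z}$ as the Tutte core of a Boltzmann tri-marked quadrangulation $A^\eta$ for a well-chosen parameter $\eta$, and then derive its local limit from the local convergence of Boltzmann quadrangulations to $\uhpqs$. Indeed, Proposition~\ref{Prop_Decomposition_of_Boltzmann_quad_is_Boltzmann} says that, conditionally on $A^\eta\in\A$, the Tutte core $\Core(A^\eta)$ has the law of $M^{\bullet\bullet,z(\eta)}$ where $z(\eta):=\eta^{-1}\monoPP(1/12,\eta)$. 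From \eqref{formula_gen_function_J} and \eqref{computation_Q_critical} we get $\monoPP(1/12,2/9)=1/3+1/9=4/9$, hence $z(\eta)\to 2$ continuously as $\eta\to 2/9$. Moreover $|\partial A^\eta|\to\infty$ in probability (the partition function diverges at $\eta=2/9$), so the three marks land on three distinct vertices of $\partial\Core$ with probability $\to 1$; therefore $\P(A^\eta\in\A)\to 1$. It thus suffices to show that $\Core(A^\eta)\to\uhpms$ locally as $\eta\to 2/9$.

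I would then reduce this to a fixed-perimeter statement. Conditionally on $|\partial A^\eta|=2p$, the map $A^\eta$ is $Q_p$ with two marked boundary edges $e_2,e_3$ chosen uniformly among the $(2p-1)(2p-2)$ ordered pairs of distinct non-root boundary edges (the first mark $e_1$ being the root). Since $p\to\infty$ in probability as $\eta\to 2/9$, and since for any fixed $p$ the edges $e_2,e_3$ lie at distance $\to\infty$ from the root along $\partial Q_p$ in probability as $p\to\infty$, it is enough to show that for every sub-map $m$ with simple boundary,
\[
\P\!\l(\,[\Core(Q_p,e_1,e_2,e_3)]_R \supset m\,\r)\ \xrightarrow[p\to\infty]{}\ \P\!\l(\,[\uhpms]_R\supset m\,\r),
\]
where the marks $e_2,e_3$ are sent to infinity. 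By the local convergence $Q_p\to\uhpqs$ recalled in Section~\ref{subsec_UIHPQ}, this further reduces to a local-determination statement: with high probability, the ball of radius $R$ in $\Core(Q_p,e_1,e_2,e_3)$ around its root is measurable with respect to a ball of finite (random) radius around the root in $Q_p$.

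This local-determination property is the main obstacle. It has two ingredients. First, Lemma~\ref{Lemma_exist_uniq_core} tells us that in $\Ti{\uhpqs}$ only finitely many pinchpoints separate the root from infinity, so the ball of radius $R$ in $\uhpms$ around its root is measurable with respect to a (random but a.s.\ finite) ball in $\uhpqs$; this transfers to $Q_p$ for large $p$ by local convergence. Second, once the three marks are far from the root, the Tutte core of $(Q_p,e_1,e_2,e_3)$ must be identified, near the root, with the bead of $\Ti{Q_p}$ corresponding to the infinite bead of $\Ti{\uhpqs}$. For this I would exploit the tree structure of beads: with probability $\to 1$ as $p\to\infty$, the three uniformly-placed marks lie in three different finite subtrees of beads branching off this ``candidate'' bead (because the finitely many finite subtrees near the root carry a vanishing fraction of boundary edges), so the centroid of the bead-tree relative to the three marks is indeed the candidate. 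Combining these observations with the local convergence $Q_p\to\uhpqs$ yields the claim, which via the change of parameter above gives Proposition~\ref{prop_convergence_randomized_perimeters}.
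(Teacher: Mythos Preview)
Your overall strategy coincides with the paper's: realize $M^{\bullet\bullet,z}$ as $\Core(A^\eta)$ via Proposition~\ref{Prop_Decomposition_of_Boltzmann_quad_is_Boltzmann}, send $\eta\to 2/9$, and use the local convergence of Boltzmann quadrangulations to $\uhpqs$ together with Lemma~\ref{Lemma_exist_uniq_core}. The differences are in how two technical steps are executed.

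First, your argument that $\P(Q^{\bullet\bullet,\eta}\in\A)\to 1$ is circular as written: you invoke ``distinct vertices of $\partial\Core$'', but $\Core$ is only defined on $\A$, and knowing $|\partial A^\eta|\to\infty$ does not by itself say anything about how the marks project to the core. The paper bypasses this by a direct generating-function computation: using \eqref{Eq_serie_generatrice_A}, \eqref{computation_Q_critical} and \eqref{computation_M_critical} one checks that $\triQQA(1/12,\eta)/\triQQ(1/12,\eta)\to 1$ as $\eta\to 2/9$.

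Second, and more substantially, your ``local-determination'' paragraph hides the step that actually needs work. You want that the centroid bead of $(Q_p,e_1,e_2,e_3)$ coincides, near the root, with the bead that becomes infinite in $\Ti{\uhpqs}$; for this you need tightness of the part of $\Ti{Q_p}$ between the root and the core. Your justification (``finite subtrees near the root carry a vanishing fraction of boundary edges'') presupposes exactly the control you are trying to establish, and at fixed $p$ the marks are not independent of the bead structure once you condition on the perimeter. The paper gets this tightness for free from Proposition~\ref{Prop_Decomposition_of_Boltzmann_quad_is_Boltzmann}: the component $C_k$ separating the root from the core in $A^{\eta_k}$ is exactly the $\biP$-piece attached at $v_1$, and that proposition says it is $\eta_k$-Boltzmann on $\biP$, hence converges in law to the \emph{finite} map $J^{\bullet,2/9}$ as $\eta_k\to 2/9$. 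One then passes to a.s.\ convergence via Skorokhod and concludes with a purely deterministic continuity lemma for $\Core$ (the paper's Lemma~\ref{lem_continuity_core}), whose hypotheses are now verified. This route avoids conditioning on the perimeter and any discussion of where $e_2,e_3$ fall.
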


Note that the perimeter of $M^{\bullet \bullet, z}$ goes to infinity as $z \to +\infty$ and that, conditionally on its perimeter, the map $M^{\bullet \bullet, z}$ has the same distribution as $M^{\bullet\bullet}_p$. Therefore, Proposition \ref{prop_convergence_randomized_perimeters} is a weaker version of Theorem \ref{thm_defn_UIHPM}. On the other hand, it implies that if $M^{\bullet\bullet}_p$ converges in distribution, the limit has to be $M_{\infty}$.

The proof consists basically of taking the limit of Proposition \ref{Prop_Decomposition_of_Boltzmann_quad_is_Boltzmann} as $z \to 2/9$: it turns out that the limit of $A^z$ is the \UIHPQ $Q_{\infty}$, and that the limit of its Tutte core (which has the same distribution as $M^{\bullet \bullet, z}$) is the Tutte core of the \UIHPQ. Here are two slight issues that we need to overcome: first, the model $A^z$ is not exactly a uniform quadrangulation (recall that $\triQA \varsubsetneq \triQ$), so we need to check that $A^z$ is close to $Q^{\bullet \bullet, z}$ in order to have convergence to the \UIHPQ. This is done in Lemma \ref{lem_convergence_A_UIHPQ}. 
Second, the Tutte core operation is not continuous for the local topology in general\footnote{For example, if $q_n \to q$ but the distance between the root and the core of $q_n$ goes to $\infty$, then the core of $q_n$ may not converge to the core of $q$ (if it converges at all).}
. The partial continuity result that we use is Lemma \ref{lem_continuity_core}, which is completely deterministic.

\begin{lemma}\label{lem_convergence_A_UIHPQ}
We have the local convergence
\[A^z \xrightarrow[z \to 2/9]{(d)} Q_{\infty}.\]
\end{lemma}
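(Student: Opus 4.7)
The strategy is to first establish the analogous convergence for $Q^{\bullet\bullet,z}$, the $z$-Boltzmann tri-marked quadrangulation on the full class $\triQ$ (without requiring $v_1,v_2,v_3$ to be pairwise distinct), and then to show that restricting to $\triQA$ does not change the local limit. Since $A^z$ is exactly $Q^{\bullet\bullet,z}$ conditioned on the event $\triQA$, these two ingredients combined yield the lemma.

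For the first step, I would observe that conditionally on the perimeter $|\partial Q^{\bullet\bullet,z}|=2p$, the unmarked quadrangulation underlying $Q^{\bullet\bullet,z}$ is distributed as $Q_p$, and the three marks are a uniform triple of distinct oriented boundary edges (with the first one being the root). Since the two non-root marks do not enter the definition of the local topology around the root, this gives $[Q^{\bullet\bullet,z}]_r\overset{(d)}{=}[Q_p]_r$ given the perimeter. The marginal law of $p$ is proportional to $2p(2p-1)(2p-2)\QQ_p z^p$, which by \eqref{formula_quad_Zp} forces $p\to\infty$ in probability as $z\to 2/9$. Combining with the already-known local convergence $Q_p\to Q_\infty$, one concludes $Q^{\bullet\bullet,z}\to Q_\infty$.

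For the second step, I would first check that $\P(Q^{\bullet\bullet,z}\in\triQA)$ tends to a positive constant. Using \eqref{formula_gen_function_J}, \eqref{computation_Q_critical} and a standard transfer theorem applied to \eqref{formula_quad_Zp}, one obtains the expansion $\monoPP(1/12,z)/z = 2 - c_1(2/9-z) + O((2/9-z)^{3/2})$ near $z=2/9$, with some $c_1>0$ (positivity is forced by the fact that $\monoPP/z\le 2$ for $z\le 2/9$, since otherwise \eqref{Eq_serie_generatrice_A} would fail to converge). Plugging this into \eqref{Eq_serie_generatrice_A} together with the asymptotic $\triMM(1/12,z')\sim 1/(2\sqrt{2-z'})$ from \eqref{computation_M_critical} gives $\triQQA(1/12,z)\sim C'/\sqrt{2/9-z}$, which has the same singularity order as $\triQQ(1/12,z)$ in \eqref{computation_Q_critical}. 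Hence $\triQQA/\triQQ$ converges to some $c_0\in(0,1]$.

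The remaining and most delicate point is to prove the asymptotic independence: for any finite sub-map $q_0$ of $Q_\infty$ containing the root,
\[
\P\bigl(Q^{\bullet\bullet,z}\in\triQA\,\big|\,q_0\subset Q^{\bullet\bullet,z}\bigr)\;\xrightarrow[z\to 2/9]{}\;c_0.
\]
The intuition is that the event $\triQA$ only depends on the bead structure around each of the three marked edges, while as $z\to 2/9$ the two non-root marks land, with high probability, on parts of the boundary far from $q_0$. The cleanest way to formalize this is to condition on $q_0$, invoke the spatial Markov property to identify the complement as an independent Boltzmann quadrangulation with simple boundary (carrying the two extra marks uniformly), and then apply the same core decomposition \eqref{Eq_serie_generatrice_A} to that complement: the $\sqrt{2/9-z}^{-1}$ singularity due to the Tutte core appears in the same way whether or not $q_0$ is prescribed, so the dependence on $q_0$ cancels in the limiting ratio. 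A Bayes-rule computation then produces $\P(q_0\subset A^z)\to\P(q_0\subset Q_\infty)$, which by \eqref{formula_distrib_UIHPQ} is exactly what is needed. The main obstacle is this last independence argument, which requires carefully tracking how the contribution of the marks to $\triQA$ factorizes from the local structure near the root.
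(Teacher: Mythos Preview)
Your first step, reducing to $Q^{\bullet\bullet,z}\to Q_\infty$ via the perimeter going to infinity, is exactly what the paper does. The divergence from the paper is in the second step, and it stems from a missed computation rather than a genuine alternative route.

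You observe correctly that $\triQQA(1/12,z)\sim C'/\sqrt{2/9-z}$ and that $\triQQ(1/12,z)$ has the same singularity order, concluding only that the ratio tends to some $c_0\in(0,1]$. But if you actually evaluate the constants, you find $c_0=1$. Concretely: from $\monoQQ(1/12,2/9)=1/3$ and $\biQQ(1/12,2/9)=1/9$ one computes $\partial_z\monoQQ(1/12,2/9)=2$, hence $\partial_z\bigl(\monoPP/z\bigr)\big|_{z=2/9}=6$, so $2-\monoPP(1/12,z)/z\sim 6\,(2/9-z)$. Plugging into $\triMM(1/12,z')\sim\tfrac{1}{2}(2-z')^{-1/2}$ and multiplying by $(\biPP/\monoPP)^3(1/12,2/9)=(2/3)^3=8/27$ gives
\[
\triQQA\Bigl(\tfrac{1}{12},z\Bigr)\;\sim\;\frac{8/27}{2\sqrt{6}}\,\frac{1}{\sqrt{2/9-z}}\;=\;\frac{2\sqrt{2}}{27\sqrt{3}}\,\frac{1}{\sqrt{2/9-z}},
\]
which matches the constant in \eqref{computation_Q_critical} exactly. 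So $\P(Q^{\bullet\bullet,z}\in\triQA)\to 1$, and conditioning on an event of probability tending to $1$ trivially preserves the local limit. This is the paper's argument in full.

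Your ``asymptotic independence'' paragraph is therefore unnecessary, and as written it is also incomplete: the event $\triQA$ involves the bead containing the root mark $e_1$, which sits inside $q_0$, so it does not factor cleanly through the complement of $q_0$ in the way you suggest. Rather than trying to repair that argument, just carry out the constant computation above.
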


\begin{proof}
We know that the perimeter of $Q^{\bullet \bullet, z}$ goes to $+\infty$ as $z \to \frac{2}{9}$ and that, conditionally on its perimeter, the law of $Q^{\bullet \bullet, z}$ is that of $Q_p$. Therefore, and since $Q_p \to Q_\infty$, we have $Q^{\bullet \bullet, z} \to Q_{\infty}$ in distribution.
Moreover, we know that $A^z$ is just $Q^{\bullet \bullet, z}$ conditionned on belonging to $\triQA$. Therefore, to finish the proof of the lemma it is enough to prove
\[ \P \l( Q^{\bullet \bullet, z} \in \triQA \r) \xrightarrow[z \to 2/9]{} 1.\]
This quantity is given by
\[ \P \l( Q^{\bullet \bullet, z} \in \triQA \r) = \frac{\triQQA \l( \frac{1}{12}, z \r)}{\triQQ \l( \frac{1}{12}, z \r)}. \]
The equivalent of the denominator as $z \to \frac{2}{9}$ is given by \eqref{computation_Q_critical}. On the other hand, the numerator can be estimated using \eqref{Eq_serie_generatrice_A} together with \eqref{formula_gen_function_J} and \eqref{computation_M_critical}: it is also equivalent to $\frac{2 \sqrt{2}}{27 \sqrt{3}} \frac{1}{\sqrt{2/9-z}}$, which concludes the proof of the lemma.
\end{proof}

For the next result, we will use the following notation: if $q_n$ is a finite quadrangulation, we denote by $\widetilde{q}_n$ the part of $q_n$ lying between the root and the Tutte core of $q_n$. More precisely, $\widetilde{q}_n$ is the submap of $q_n$ formed by the faces $f$ such that the edge of $\Ti{q}$ inside of $f$ does not belong to $\Core(q_n)$, and can be linked to the root outside of $\Core(q_n)$.

\begin{lemma}\label{lem_continuity_core}
Let $(q_n)$ be a sequence of finite quadrangulations with a simple boundary converging for the local topology to an infinite half-plane quadrangulation $q$. We assume that
\begin{enumerate}
\item[(i)]\label{item_uniqueness_core}
the map $\Ti{q}$ has a unique infinite bead $\Core(q)$;
\item[(ii)]\label{item_core_gets_larger}
the size of $\Core(q_n)$ goes to infinity;
\item[(iii)]\label{item_bounded_bead}
$(\widetilde{q}_n)$ is constant and finite for $n$ large enough.
\end{enumerate}
Then we have the local convergence
\[ \Core(q_n) \xrightarrow[n \to +\infty]{} \Core(q).\]
\end{lemma}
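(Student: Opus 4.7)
The plan is to anchor both $\Core(q)$ and $\Core(q_n)$ at a common vertex $v^*$ and transfer the local convergence $q_n \to q$ through the Tutte mapping. Let $v^*$ be the vertex where $\widetilde{q}$ attaches to $\Core(q)$, i.e.\ the root vertex of $\Core(q)$ in the planar picture. By assumption (iii), $\widetilde{q}_n = \widetilde{q}$ for $n$ large, so $v^*$ is also the root vertex of $\Core(q_n)$ and lies at a uniformly bounded graph distance from the root of $q_n$.

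Two structural facts drive the argument. The first is that the Tutte mapping is local: each edge of $\Ti{q_n}$ is a diagonal of an internal quadrangular face, so the $\Ti{q_n}$-ball of radius $r$ at $v^*$ is entirely determined by the $q_n$-ball of radius $O(r)$ around $v^*$. Combined with the local convergence $q_n \to q$ and the boundedness of $d_{q_n}(\rho,v^*)$, this yields local convergence $\Ti{q_n} \to \Ti{q}$ at $v^*$, with matching positions of all pinchpoints lying in any fixed $\Ti{q}$-neighborhood of $v^*$. The second fact is that within any bead $B$ of $\Ti{q}$, graph distance in $B$ coincides with graph distance in $\Ti{q}$: the beads form a tree meeting only at pinchpoints, so any $\Ti{q}$-path between two vertices of $B$ that exits $B$ through some pinchpoint must re-enter through the same one and can be shortcut to stay in $B$. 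Consequently $[\Core(q)]_r$ equals the intersection of the $\Ti{q}$-ball of radius $r$ at $v^*$ with the bead $\Core(q)$, and analogously for the prelimit.

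Combining these, for fixed $r$ and $n$ large the $\Ti{q_n}$-ball of radius $r$ at $v^*$ matches that of $\Ti{q}$, together with all pinchpoints it contains. Hence the bead of $v^*$ restricted to this ball agrees in $\Ti{q_n}$ and $\Ti{q}$. By construction this bead is $\Core(q_n)$ (since $v^*$ is the attachment point of the core), and by assumption (i) the corresponding bead of $\Ti{q}$ is $\Core(q)$. Assumption (ii) is then used to rule out the degenerate possibility that $\Core(q_n)$ is so small that $[\Core(q_n)]_r$ is limited by the total size of $\Core(q_n)$ rather than by the bead-distance from $v^*$; since $|\Core(q_n)| \to \infty$, this does not happen for $n$ large. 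This yields $[\Core(q_n)]_r = [\Core(q)]_r$ as desired.

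I expect the main obstacle to be the careful distance bookkeeping between $q_n$, $\Ti{q_n}$ and $\Core(q_n)$: one must verify that all pinchpoints relevant to cutting the bead of $v^*$ within $\Ti{q_n}$-radius $r$ are actually visible in a $q_n$-neighborhood controlled by the local convergence, and that pinchpoints far from $v^*$ in $\Ti{q_n}$ cannot conspire to shrink the bead of $v^*$ inside that ball. Once this is set up, the identification $[\Core(q_n)]_r = [\Core(q)]_r$ reduces to the elementary fact that beads are locally determined by pinchpoint positions, and Lemma \ref{lem_continuity_core} follows by letting $r \to \infty$.
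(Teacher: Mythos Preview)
Your plan is essentially the paper's own argument: use the locality of the Tutte mapping to get $\Ti{q_n}\to\Ti{q}$, use assumption (iii) to anchor the core at a fixed vertex near the root, and read off $[\Core(q_n)]_r$ from $[\Ti{q_n}]_{r+O(1)}$ by cutting at the pinchpoints. The paper phrases this as ``$[\Core(q_n)]_r$ is obtained from $[\Ti{q_n}]_{r+D}$ by removing $\widetilde{m}_n$ and the beads not touching $\widetilde{m}_n$'', which is exactly your bead-intersection description.

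There is, however, one genuine gap. You write ``By assumption (iii), $\widetilde{q}_n=\widetilde{q}$ for $n$ large'', identifying the eventual value of $\widetilde{q}_n$ with the root-to-core piece of the \emph{limit} $q$. Assumption (iii) only says the sequence $(\widetilde{q}_n)$ stabilizes; it does \emph{not} say its stable value coincides with the corresponding object $\widetilde{q}$ for $q$. This identification is the crux of the lemma, and it is precisely where assumptions (i) and (ii) do their real work. The danger is that the stable attachment point $v'$ of $\Core(q_n)$ lands on a \emph{finite} bead of $\Ti{q}$: then the bead of $v'$ in $\Ti{q}$ would not be $\Core(q)$, even though locally near $v'$ the pictures in $\Ti{q_n}$ and $\Ti{q}$ agree. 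The paper rules this out by arguing that if $r$ exceeds the diameters of both $\widetilde{m}$ and the stable $\widetilde{m}'$, then by (ii) $\Core(q_n)$ reaches distance $r$ from the root, so $\widetilde{m}'$ is characterized inside $[\Ti{q_n}]_r=[\Ti{q}]_r$ as the maximal piece separated from all radius-$r$ vertices by a pinchpoint; the same characterization applied in $\Ti{q}$ (using (i) to know $\Core(q)$ is the unique infinite bead) gives $\widetilde{m}$, hence $\widetilde{m}'=\widetilde{m}$. Your invocation of (ii) (``to rule out the degenerate possibility that $\Core(q_n)$ is so small\ldots'') is in the right spirit but does not quite capture this: the point is not that $[\Core(q_n)]_r$ might be truncated by total size, but that without (ii) you cannot match the anchor $v'$ to the correct bead of $\Ti{q}$. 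Once you supply this identification, your argument is complete and matches the paper's.
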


\begin{proof}
We first note that if $u,v$ are two white vertices of $q$, then $d_{\Ti{q}}(u,v) \geq (1/2) d_q(u,v)$. Hence, recalling from Section \ref{subsec_basics} the definition of the “ball” $[m]_r$, it is straightforward to check that $[\Ti{q}]_r$ is a function of $[q]_{2r+2}$. By the definition of the local topology, this implies that $\Ti{q_n} \to \Ti{q}$.

Now let $\wt m_n$ be the connected component of the root in $\Ti{q_n} \backslash \Core(q_n)$, i.e. $\wt m_n$ is the part of $\Ti{q_n}$ corresponding to $\wt q_n$ via the Tutte mapping. Since $\wt q_n$ (and thus $m_n$) is constant and finite for $n$ large enough, let $D$ be the eventual distance in $\Ti{q_n}$ between the root and $\Core(q_n)$. Then $\left[ \Core(q_n)\right]_r$ is obtained from $\left[ \Ti{q_n} \right]_{r+D}$ by removing $\wt m_n$, as well as all the beads which do not touch $\wt m_n$ (because they correspond to the other finite beads of $\Ti{q_n}$). In particular $\left[ \Core(q_n)\right]_r$ is constant eventually, so $\Core(q_n)$ converges locally.

We now need to prove that the limit of $\Core(q_n)$ is $\Core(q)$. This is equivalent to showing $\wt m'=\wt m$ for $n$ large enough, where $\wt m'$ is the eventual value of $\wt m_n$ (which exists by Item (iii)), and $\wt m$ is the connected component of the root in $\Ti{q} \backslash \Core(q)$. Let $r$ be larger than the diameters of $\wt m$ and $\wt m'$. We know that $\wt m' \subset \Ti{q_n}$ for $n$ large enough. By Item (ii), we know that $\Core(q_n)$ contains vertices at distance $r$ from the root of $q_n$. Hence $\wt m_n=\wt m'$ is the largest part of $\Ti{q_n}$ which is separated by a pinchpoint from all the points at distance $r$ from the root. Since $\left[ \Ti{q_n} \right]_r=\left[ \Ti{q} \right]_r$ for $n$ large enough, we deduce that $\wt m'$ is the largest part of $\Ti{q}$ which is separated by a pinchpoint from all the points at distance $r$ from the root. Since $r$ is larger than the diameter of $\wt m$, this is also true for $\wt m$, so $\wt m'=\wt m$, which proves the lemma.
\end{proof}

\begin{proof}[Proof of Proposition \ref{prop_convergence_randomized_perimeters}]
For every $k$, let $z_k \in \l[ 0,2 \r)$ with $z_k \to 2$. Let also $\widetilde{z}_k$ be such that $\frac{1}{\widetilde{z}_k} \monoPP \l( \frac{1}{12}, \widetilde{z}_k \r)=z_k$, so that $\widetilde{z}_k \to 2/9$. By Lemma \ref{lem_convergence_A_UIHPQ}, we know that $A^{\widetilde{z}_k} \to Q_{\infty}$ in distribution. Moreover, for every $k$, let $C_k$ be the component of $A^{\widetilde{z}_k}$ which separates the root of $A^{\widetilde{z}_k}$ from its Tutte core. By Proposition \ref{Prop_Decomposition_of_Boltzmann_quad_is_Boltzmann}, we know that $C_k$ converges in distribution to the finite map $J^{\bullet, 2/9}$. This implies that the pair $\l( A^{\widetilde{z}_k}, C_k \r)$ is tight. Hence, up to extracting a subsequence from $(z_k)$, we may assume the joint convergence
\begin{equation}\label{eqn_joint_convergence_A_C}
\l( A^{\widetilde{z}_k}, C_k \r) \xrightarrow[k \to +\infty]{} \l( Q'_{\infty}, C \r)
\end{equation}
in distribution, for a pair $(Q'_\infty, C)$ of random variables such that $Q'_\infty$ has the law of $Q_\infty$ and $C$ has the law of $J^{\bullet, 2/9}$ (in particular it is a.s. finite). Moreover, we know by Proposition \ref{Prop_Decomposition_of_Boltzmann_quad_is_Boltzmann} that the core of $A^{\widetilde{z}_k}$ has the law of $M^{\bullet \bullet, z_k}$, and that $|M^{\bullet \bullet, z_k}| \to +\infty$ in probability (because the generating function blows up), so $\left| \Core(A^{\widetilde{z}_k}) \right| \to +\infty$ jointly with \eqref{eqn_joint_convergence_A_C}. By the Skorokhod embedding theorem, we may assume these convergences are almost sure. In particular, the sequence $\left( C_k \right)$ is a.s. ultimately constant. Hence, the sequence of quadrangulations $\l( A^{\widetilde{z}_k} \r)_{k \geq 1}$ almost surely satisfies all the assumptions of Lemma \ref{lem_continuity_core}. Therefore, we have
\[ M^{\bullet \bullet, z_k} = \Core \l( A^{\widetilde{z}_k} \r) \xrightarrow[k \to +\infty]{a.s.} \Core (Q_{\infty})=M_{\infty},\]
so the convergence holds in distribution. We have just proved that every sequence $(z_k) \to 2$ has a subsequence along which $M^{\bullet \bullet, z}$ converges in distribution to $M_{\infty}$, which proves that $M^{\bullet \bullet, z}$ converges to $M_{\infty}$ as $z \to 2$.
\end{proof}

\subsection{Convergence of Boltzmann maps with fixed perimeter}

We now finish the proof of Theorem \ref{thm_defn_UIHPM}, i.e. of convergence of $M_p$ to $M_{\infty}$. The basic idea is to prove, using classical ideas and the asymptotics \eqref{asymptotic coefficients_M}, that the law of $M_p$ converges in a weak sense, and then to use Proposition \ref{prop_convergence_randomized_perimeters} to identify the limit as $M_{\infty}$.

For this, we introduce a notion of sub-map of a half-plane map, just like the sub-maps of quadrangulations defined in Section \ref{asymptotic coefficients_M}. Let $m$ be a map with a simple boundary, with a marked segment on the boundary containing the root edge of $m$. If $M$ is a half-plane map with a simple boundary, we write $m \subset M$ if there is a neighbourhood $V$ of the root of $M$ isomorphic to $m$, such that the roots are identified and the marked segment of $\partial m$ is identified with $\partial V \cap \partial M$.

\begin{lemma}\label{lem_limit_of_proba_submap}
For any possible sub-map $m$, the probability
\[ \P \l( m \subset M_p \r)\]
has a limit as $p \to +\infty$.
\end{lemma}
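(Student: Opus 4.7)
The plan is to write $\P(m \subset M_p)$ explicitly as a ratio of the partition functions $\MM_\bullet$ and then read off the limit from the asymptotic \eqref{asymptotic coefficients_M}. This mirrors the standard strategy, originating in \cite{AS03}, for establishing local convergence of Boltzmann maps to an infinite analogue, and it is essentially the analogue for maps of formula \eqref{formula_distrib_UIHPQ} for the \UIHPQ.

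Fix a sub-map $m$ with marked boundary segment $\partial_b m$ of length $a$ (containing the root edge) and top boundary $\partial_t m$ of length $b = |\partial m| - a$. For $p$ large enough, the event $\{m \subset M_p\}$ is in bijection with the choice of a complementary rooted map $m'$ with simple boundary of length $p - a + b$, glued to $m$ along $\partial_t m$; the root of $m'$ is canonically determined by the endpoint of $\partial_b m$ along $\partial M_p$. A direct edge count gives $|M_p| = |m| + |m'| - b$, since the $b$ edges of $\partial_t m$ are boundary edges of both pieces. Summing the Boltzmann weight $(1/12)^{|M_p|}$ over admissible $m'$ therefore yields
\begin{equation*}
\P(m \subset M_p) \;=\; (1/12)^{|m|} \cdot 12^{b} \cdot \frac{\MM_{p - a + b}}{\MM_p}.
\end{equation*}

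By \eqref{asymptotic coefficients_M} we have $\MM_p \sim \tfrac{1}{2\sqrt{2\pi}} \, 2^p \, p^{-5/2}$, and since $b-a$ is a fixed integer independent of $p$, the ratio $\MM_{p-a+b}/\MM_p$ converges to $2^{b-a}$ as $p \to +\infty$. Hence
\begin{equation*}
\P(m \subset M_p) \;\xrightarrow[p \to +\infty]{}\; (1/12)^{|m|} \cdot 12^{b} \cdot 2^{b-a},
\end{equation*}
which proves the lemma.

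The only real subtlety is the bookkeeping that makes the sum over $m'$ produce exactly $\MM_{p-a+b}$: one must check that $m'$ inherits a canonical root from the gluing (so that the sum ranges over \emph{rooted} maps without overcounting), and that each $M_p$ containing $m$ corresponds to exactly one such $m'$. Once this decomposition is set up, everything reduces to the elementary ratio asymptotic for $\MM_p$ coming from \eqref{asymptotic coefficients_M}. Note also that, combined with Proposition \ref{prop_convergence_randomized_perimeters}, this immediately identifies the limiting probabilities with $\P(m \subset M_\infty)$, yielding Theorem \ref{thm_defn_UIHPM}.
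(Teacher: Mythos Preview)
Your proof is correct and follows essentially the same approach as the paper: decompose $M_p$ into $m$ and a complementary rooted map with simple boundary of length $p-a+b$, sum the Boltzmann weights to obtain the ratio $\MM_{p-a+b}/\MM_p$, and conclude by the asymptotics \eqref{asymptotic coefficients_M}. Your prefactor $(1/12)^{|m|}\cdot 12^{b}$ is exactly the paper's $(1/12)^{|m\backslash\partial_t m|}$, so even the bookkeeping agrees.
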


\begin{proof}
We denote by $\partial_b m$ the \emph{bottom boundary} of $m$ (i.e. its marked segment), and by $\partial_t m$ its \emph{top boundary} (i.e. the rest of $\partial m$). Let $p > |\partial_b m|$. 
If $m \subset M_p$, then the complementary of $m$ is a planar map with a simple boundary of length
\[p-|\partial_b m|+|\partial_t m|.\]
By summing over every possible value of this complementary, we obtain
\[ \P \l( m \subset M_p \r) = \l( \frac{1}{12} \r)^{|m \backslash \partial_t m|} \frac{\monoMM_{p-|\partial_b m|+|\partial_t m|}}{\monoMM_p}, \]
where $|m \backslash \partial_t m|$ is the number of edges of $m$ which do not belong to its top boundary. The convergence of this quantity as $p \to +\infty$ follows from the asymptotics \eqref{asymptotic coefficients_M}.
\end{proof}

\begin{proof}[End of the proof of Theorem \ref{thm_defn_UIHPM}]
Since a map is characterized by the set of its finite sub-maps, it is enough to prove that, for any possible sub-map, we have
\[ \P \l( m \subset M_p \r) \xrightarrow[p \to +\infty]{} \P \l( m \subset M_{\infty} \r).\]
Hence, we fix such a map $m$, and denote by $\ell_m$ the limit given by Lemma \ref{lem_limit_of_proba_submap}.

By definition, conditionally on its perimeter, the map $M^{\bullet \bullet, z}$ has the same law as $M_p$, where two additional uniform boundary edges (conditionned to be distinct from each other and from the root) have been marked. Therefore, if we forget the two additional marked edges, then $M^{\bullet \bullet, z}$ has the same law as $M_{P_z}$, where $P_z$ is a certain random variable on $\N$. Moreover, as we have seen above (see the discussion right after Proposition \ref{Prop_Decomposition_of_Boltzmann_quad_is_Boltzmann}), the perimeter $P_z$ goes to $+\infty$ in probability as $z \to 2$. Therefore, we have
\[ \P \l( m \subset M^{\bullet \bullet, z} \r) \xrightarrow[z \to 2]{} \ell_m.\]
On the other hand, by Proposition \ref{prop_convergence_randomized_perimeters}, this must also converge to $\P \l( m \subset M_{\infty} \r)$, so $\P \l( m \subset M_{\infty} \r)=\ell_m$, which concludes the proof.
\end{proof}

\section{Perspectives}\label{Sec_perspectives}

The self-duality of uniform planar maps (as opposed to e.g. $d$-angulations) plays a very important role in our proofs, so we do not believe that our method can be extended to more general map models like the \UIPT. Let us simply note that via the \Tutte, we can deduce lower bounds on resistances in the dual map of the \UIHPQ. However, this is of less interest since the dual of a quandrangulation is a bounded-degree graph, so logarithmic lower bounds on the resistances can be obtained via circle packings \cite{BS01} (at least in the full-plane topology). Similarly, upper bounds for resistances in uniform planar maps would imply upper bounds for resistances in uniform quadrangulations.

On the other hand, we hope that self-duality could be used to obtain upper bounds on the resistances in uniform planar maps. The reason why obtaining upper bounds seems more difficult than lower bounds is that this would require to control the resistances at all scales from the root to infinity, whereas lower bounds only required to prove that a “positive proportion” of the scales contribute significantly. A natural first step in this direction would be to prove that the resistance $R_{p_1, p_2, p_3, p_4}$ of Lemma \ref{lem_duality_random} is roughly of order $1$ with large probability (i.e. both the resistance and its inverse are tight) when $p_1, p_2, p_3, p_4$ are of the same order. The next natural step would be to “glue” the different scales together by using Russo--Seymour--Welsh-like ideas.

Another natural direction would be to try to extend our ideas to the full-plane \UIPM. This seems also harder to handle than the half-plane case, because in a peeling exploration of a full-plane map, it is impossible that a single peeling step creates a block separating the root from infinity. Here again, it would therefore be necessary to “glue” different blocks like in the RSW theory.

Finally, while uniform maps seem to be the only natural self-dual model of maps “without matter”, other self-dual models of infinite random planar maps equipped with statistical physics models have already been considered, see e.g. \cite{Chen15}. It is natural to hope that the techniques developed in the present work may be used to say something about the simple random walk on these models.

\bibliographystyle{abbrv}
\bibliography{bibli}

\begin{thebibliography}{10}

\bibitem{Ang05}
O.~Angel.
\newblock Scaling of percolation on infinite planar maps, {I}.
\newblock {\em arXiv:0501006}, 2005.

\bibitem{ACpercopeel}
O.~Angel and N.~Curien.
\newblock Percolations on infinite random maps, half-plane models.
\newblock {\em Ann. Inst. H. Poincar\'e Probab. Statist.}, 51(2):405--431,
  2014.

\bibitem{AR18}
O.~Angel and G.~Ray.
\newblock The half plane {UIPT} is recurrent.
\newblock {\em Probab. Theory Related Fields}, 170(3):657--683, Apr 2018.

\bibitem{AS03}
O.~Angel and O.~Schramm.
\newblock Uniform infinite planar triangulations.
\newblock {\em Comm. Math. Phys.}, 241(2-3):191--213, 2003.

\bibitem{BS01}
I.~Benjamini and O.~Schramm.
\newblock Recurrence of distributional limits of finite planar graphs.
\newblock {\em Electron. J. Probab.}, 6:no. 23, 13 pp. (electronic), 2001.

\bibitem{BDG16}
M.~Biskup, J.~Ding, and S.~Goswami.
\newblock Return probability and recurrence for the random walk driven by
  two-dimensional {G}aussian free field.
\newblock {\em Communications in Mathematical Physics}, Nov 2019.

\bibitem{BS13}
J.~E. Bj{\"o}rnberg and S.~O. Stefansson.
\newblock Recurrence of bipartite planar maps.
\newblock {\em Electron. J. Probab.}, 19(31):1--40, 2014.

\bibitem{BG09}
J.~Bouttier and E.~Guitter.
\newblock Distance statistics in quadrangulations with a boundary, or with a
  self-avoiding loop.
\newblock {\em J. Phys. A}, 42(46):465208, 44, 2009.

\bibitem{Bud15}
T.~Budd.
\newblock The peeling process of infinite {B}oltzmann planar maps.
\newblock {\em Electron. J. Combin.}, 23, 06 2015.

\bibitem{CCUIHPQ}
A.~Caraceni and N.~Curien.
\newblock Geometry of the uniform infinite half-planar quadrangulation.
\newblock {\em Random Structures Algorithms}, 52(3):454--494, 2018.

\bibitem{CD06}
P.~Chassaing and B.~Durhuus.
\newblock Local limit of labeled trees and expected volume growth in a random
  quadrangulation.
\newblock {\em Ann. Probab.}, 34(3):879--917, 2006.

\bibitem{Chen15}
L.~Chen.
\newblock Basic properties of the infinite critical-{FK} random map.
\newblock {\em Ann. Inst. Henri Poincaré D}, 4, 02 2015.

\bibitem{CPeccot}
N.~Curien.
\newblock Peeling random planar maps.
\newblock {\em Saint-Flour lecture notes}, 2019.

\bibitem{CLGpeeling}
N.~Curien and J.-F. Le~Gall.
\newblock Scaling limits for the peeling process on random maps.
\newblock {\em Ann. Inst. H. Poincaré Probab. Statist.}, 53(1):322--357, 02
  2017.

\bibitem{CMboundary}
N.~Curien and G.~Miermont.
\newblock Uniform infinite planar quadrangulations with a boundary.
\newblock {\em Random Structures Algorithms}, 47(1):30--58, 2015.

\bibitem{FS09}
P.~Flajolet and R.~Sedgewick.
\newblock {\em Analytic combinatorics}.
\newblock Cambridge University Press, Cambridge, 2009.

\bibitem{GGN12}
O.~Gurel-Gurevich and A.~Nachmias.
\newblock Recurrence of planar graph limits.
\newblock {\em Ann. of Math.}, 177(2):761--781, 2013.

\bibitem{GH18}
E.~Gwynne and T.~Hutchcroft.
\newblock Anomalous diffusion of random walk on random planar maps.
\newblock {\em arXiv:1807.01512}, 2018.

\bibitem{GM17}
E.~Gwynne and J.~Miller.
\newblock Random walk on random planar maps: spectral dimension, resistance,
  and displacement.
\newblock {\em arXiv:1711.00836}, 2017.

\bibitem{Kri05}
M.~Krikun.
\newblock Local structure of random quadrangulations.
\newblock {\em arXiv:0512304}, 2015.

\bibitem{gall2017separating}
J.-F. Le~Gall and T.~Leh{\'e}ricy.
\newblock Separating cycles and isoperimetric inequalities in the uniform
  infinite planar quadrangulation.
\newblock {\em The Annals of Probability}, 47(3):1498--1540, 2019.

\bibitem{Lee17}
J.~Lee.
\newblock Conformal growth rates and spectral geometry on distributional limits
  of graphs.
\newblock {\em arXiv:1701.01598}, 2017.

\bibitem{Leh18}
T.~Leh{\'e}ricy.
\newblock First-passage percolation in random planar maps and {T}utte's
  bijection.
\newblock {\em arXiv preprint arXiv:1906.10079}, 2019.

\bibitem{MN13}
L.~M{\'e}nard and P.~Nolin.
\newblock Percolation on uniform infinite planar maps.
\newblock {\em Electron. J. Probab.}, 19(78):1--27, 02 2014.

\bibitem{R15}
L.~Richier.
\newblock Universal aspects of critical percolation on random half-planar maps.
\newblock {\em Electron. J. Probab.}, 20:45 pp., 2015.

\bibitem{St18}
R.~Stephenson.
\newblock Local convergence of large critical multi-type {G}alton--{W}atson
  trees and applications to random maps.
\newblock {\em J. Theoret. Probab.}, 31(1):159--205, Mar 2018.

\end{thebibliography}

\end{document}